\numberwithin{equation}{section}
\theoremstyle{plain}
\def \be{\begin{equs}}
	\def \ee{\end{equs}}
\def \tcr{\textcolor{black}}
\def \I{\mathcal{I}}
\def \E{\mathbb{E}}
\def \P{\mathbb{P}}
\def \Ptheta {\mathbb{P}_{\boldsymbol{\Theta},a,b}}
\def \Pzero {\mathbb{P}_{\mathbf{1},a,b}}
\def \Etheta {\mathbb{E}_{\mathbf{\Theta},a,b}}
\def \Ezero {\mathbb{E}_{\mathbf{1},a,b}}
\def \Bin {\mathrm{Bin}}
\def \cdense {C_{\mathrm{dense}}}
\def \csparse {C_{\mathrm{sparse}}}
\def \cmax {C_{\mathrm{max}}}
\def\chc {C_{\mathrm{HC}}}
\def \muzero {\mu_{n0}}
\def \sigmazero {\sigma_{n0}}
\def \muone {\mu_{n1}}
\def \mutwo {\mu_{n2}}
\def \sigmaone {\sigma_{n1}}
\def \sigmatwo {\sigma_{n2}}
\def \sigmabeta {\sigma_n(\beta_1,\beta_2)}
\def \mubeta {\mu_n(\beta_1,\beta_2)}
\def \C {\mathcal{C}}
\def \hcpar {\C,\beta_1,\beta_2}
\def \hcparl {\tilde{S}_l,\beta_1,\beta_2}
\def \hcparlone {\tilde{S}_{l_1},\beta_1,\beta_2}
\def \hcparltwo {\tilde{S}_{l_2},\beta_1,\beta_2}
\def \genconk {\overline{\rho}_k(\beta_1,\beta_2)}
\def \muonenull {\mu^0_{n1}}
\def \mutwonull {\mu^0_{n2}}
\def \sigmabetanulll {\sigmazero(\beta_1,\beta_2,\tilde{S}_l)}
\def \sigmabetanulllone {\sigmazero(\beta_1,\beta_2,\tilde{S}_{l_1})}
\def \sigmabetanullltwo {\sigmazero(\beta_1,\beta_2,\tilde{S}_{l_2})}
\def \mubetanulll {\beta_1\muonenull(\tilde{S}_{l})+\beta_2\mutwonull(\tilde{S}_{l})}
\def \mubetanulllone {\beta_1\muonenull(\tilde{S}_{l_1})+\beta_2\mutwonull(\tilde{S}_{l_1})}
\def \mubetanullltwo {\beta_1\muonenull(\tilde{S}_{l_2})+\beta_2\mutwonull(\tilde{S}_{l_2})}
\newtheorem*{theorem*}{Theorem}
\begin{document}
	\begin{frontmatter}
		\title{Testing Degree Corrections in Stochastic Block Models}
		\runtitle{degree corrections for SBM}
		
		\begin{aug}
			
			\author{\fnms{Rajarshi} \snm{Mukherjee}\ead[label=e1]{rmukherj@hsph.harvard.edu}},
			\author{\fnms{Subhabrata} \snm{Sen}\ead[label=e2]{subhabratasen@fas.harvard.edu}}

			\affiliation{Harvard University}
			
			\address{Department of Biostatistics\\
				655 Huntington Avenue \\
				Boston, MA- 02115. \\
				\printead{e1}}	
			
			\address{Department of Statistics\\
				1 Oxford Street\\
				Cambridge, MA-02138.  \\
				\printead{e2}}

		\end{aug}
		
		\begin{abstract} We study sharp detection thresholds for degree corrections in Stochastic Block Models in the context of a goodness of fit problem, and explore the effect of the unknown community assignment (a high dimensional nuisance parameter) and the graph density on testing for degree corrections. When degree corrections are relatively dense, a simple test based on the total number of edges is asymptotically optimal. For sparse degree corrections, the results undergo several changes in behavior depending on density of the underlying Stochastic Block Model. For graphs which are not extremely sparse, optimal tests are based on  Higher Criticism or Maximum Degree type tests based on a linear combination of within and across (estimated) community degrees. In the special case of balanced communities, a simple degree based Higher Criticism Test \citep{mms2016} is optimal in case the graph is not completely dense, while the more complicated linear combination based procedure is required in the completely dense setting. 
			The ``necessity" of the two step procedure is demonstrated for the case of balanced communities by the failure of the ordinary Maximum Degree Test in achieving sharp constants. Finally for extremely sparse graphs the optimal rates change, and a version of the maximum degree test with a different rejection region is shown to be optimal.
		\end{abstract}

		\begin{keyword}[class=AMS]
			\kwd[Primary ]{62G10}
			\kwd{62G20}
			\kwd{62C20}
		\end{keyword}
		\begin{keyword}
			\kwd{Detection Boundary}
			\kwd{Stochastic Block Model}
			\kwd{Sparse Signals}
		\end{keyword}
		
	\end{frontmatter}
	
	\section{Introduction}

	The analysis of network data has received considerable attention in diverse areas of research such as social sciences, biology, statistics, and computer science. 
	At a high level, the central task in this area is to study underlying structural characteristics, given the network data. A statistically principled approach formalizes any such question as an inference problem, given a suitable, probabilistic generative model for the observed data. Thus statistical research in this direction has focused on a few principal themes. The first theme concerns the design of suitable models which reflect some of the features observed in real networks \citep{barabasi1999emergence,watts1998collective}, while a second theme concentrates on developing statistical methodology for inference on data from these generative situations. \textcolor{black}{ In the scheme of these two themes, statistical literature tends to build on initially simpler models to eventually work with increasingly complex ones. In this statistical endeavor, however,  some inferential tasks can be easily adapted to the increased complexity of the model, whereas other inferential tasks are often significantly more complicated.} It is in these situations, that a third, perhaps equally important, but often less emphasized statistical question becomes important. This concerns the choice of simple vs. complex models for the data under consideration, and is intimately related to classical goodness of fit testing problems. In this paper, we concentrate on a concrete example of this general problem in the context of degree corrections for stochastic block models.
	
	To this end, we first describe the Stochastic Block Model (henceforth referred to as SBM). It has been empirically observed that real networks often have small groups of vertices which are more homogeneous compared to the remaining vertices. For example, in a social network setup, such a group might represent vertices which share the same profession. Such a group is loosely referred to as a ``community", and the task of finding such set of vertices from the data, referred to as the ``community detection problem", has emerged as a central challenge in network analysis.  The SBM, introduced by \cite{holland1983sbm}, has emerged as the canonical setup to study this problem. 
	
	
	The well-known SBM with $k$ communities can be described as follows. The model takes as input a labeling $z: [n] \to [k]$ (where $[n]$ stands for $\{1,\ldots,n\}$ and $[k]$ is similarly defined), and a matrix of probabilities $P = (P_n(r,s))_{1\leq r,s \leq k}$. Given these parameters, first consider the partition (induced by $z$) $\C=\{\tilde{S}_1,  \cdots, \tilde{S}_k\}$ of $[n] = \tilde{S}_1 \cup \cdots \cup \tilde{S}_k$, with $\tilde{S}_{\ell} = \{ i \in [n]: z(i) = \ell \}$. Now, we construct a simple labeled graph $G=(V,E)$ with vertex set $V = [n]$ and the edges in $E$  added independently with 
	\be
	\P[ \{i,j \} \in E] = P_n(\ell, \ell') \,\,\,\,\, \textrm{if} \,\,\, z(i) = \ell, z(j) = \ell'. \label{eq:model_vanilla}
	\ee
	In this context, each cluster $\{i: z(i) = \ell\}$ is referred to as a ``community", and the community detection problem formally refers to  recovering the latent generative labels $z$, given the graph $G$ generated from the SBM \eqref{eq:model_vanilla}. It is common to consider the ``assortative"  SBM, where $P_n(\ell, \ell) \geq P_n( \ell, \ell')$ for $ 1\leq \ell\neq \ell' \leq k$. We will restrict ourselves to this setting throughout. 
	A sharp analysis of the limits of statistical inference (especially of recovering the true underlying communities)  under this model has received considerable attention recently. We do not attempt to survey the extensive literature in this area, and instead refer the reader to the two excellent surveys \cite{abbe2017recent}, \cite{moore2017survey}, and the references therein, for an extensive overview of the recent progress on this problem and related open questions.  
	
	Practitioners often fit these models to real networks to form preliminary ideas about community structure, and for exploratory data analysis (\cite{snijders1997estimation}) -- \textcolor{black}{ before proceeding to explore further details in the model such as strength of connections between and across communities}. However, while a theoretical understanding of the model has attained considerable maturity, it has also been widely reported that the model is often inappropriate in practice. The model favors graphs where the vertex degrees concentrate around a fixed value, and fails to model networks with a non-trivial degree distribution, in addition to a community structure. If this issue is ignored, and algorithms for community detection developed in the context of the SBM used on these examples, the algorithm often splits the vertices into high and low degree groups, and fails completely to uncover the true community memberships. A notable example, which exhibits this phenomenon is the political blog data of \cite{adamic2005blogs}. To address this issue, \cite{karrer2011stochastic} have introduced the ``Degree Corrected Stochastic Block Model" (henceforth abbreviated as DCSBM), which incorporates a separate ``degree" parameter for each vertex. 
	
	%
	
	The DCSBM with $k$ communities takes as input a labeling $z: [n] \to [k]$, a matrix of probabilities $P = (P_n(r,s)) \in [0,1]^{k \times k}$, and, in addition, a vector $\mathbf{\Theta} = (\theta_1, \cdots , \theta_n) \in \mathbb{R}_+^n$ where $\mathbb{R}_{+} = [0, \infty)$. Given these inputs, as before, we first partition $[n] = \cup_{\ell=1}^{k} \tilde {S}_{\ell}$, where $\tilde{S}_{\ell}$ are as outlined above. Next, we construct a graph with vertices $[n]$ and add edges independently with probabilities 
	\be
	\P[\{i,j\} \in E] = \theta_i \theta_j  P_n(\ell, \ell') \,\,\,\,\, \textrm{if} \,\, z(i) = \ell, z(j) = \ell'. \label{eq:DCSBM}
	\ee
	Note that for the general DCSBM, we can multiply each $\theta_i$ by some factor $\lambda \in (0,1)$, and adjust the $P$ matrix suitably so that the connection probabilities are unchanged. Thus the model is not identifiable as specified, and requires some extra constraints on the parameters. The usual recourse in this context is to incorporate an extra normalization to constrain 
	$\sum_{i : z(i )= \ell} \theta_i$. 
	The model becomes identifiable under this extra set of constraints \citep{gao2016community,karrer2011stochastic,zhao2012consistency,gulikers2017spectral,chen2018convexified}. In the problem we consider, the relevant parameter space for $\mathbf{\Theta}$ will automatically render the model identifiable. We comment more on this after defining the parameter space through equation \eqref{eq:parameterspace}. Finally, note that the parameters $\theta_i$ control the degrees of the vertices in the constructed graph, and will therefore be referred to as the ``degree corrections". Observe that if we set $\mathbf{\Theta} =\mathbf{1}$ in the degree corrected model introduced above, we reduce to the stochastic block model. 
	\cite{karrer2011stochastic} show empirically that model fits are often considerably improved under this more general model \eqref{eq:model_main}. Motivated by the success of the DCSBM in modeling real networks, numerous authors have, in turn, developed powerful machinery for community detection under this model (\cite{zhao2012consistency},  \cite{jin2015fast}, \cite{gao2016community}, \cite{lei2016goodness}).

	{As mentioned earlier, given a real network which is believed to come from a model like \eqref{eq:DCSBM}, there are several tasks a practitioner might seek to perform. Indeed, two basic questions include (i) clustering the vertices into meaningful groups based on their interconnections, (i.e. understand the community assignments) and (ii) understand the strength of interconnections between and across communities (i.e. infer $P_n(\ell,\ell')$'s).  Indeed, given the added flexibility of the DCSBM, one might wonder why every dataset with a potential community structure should not be modeled as a DCSBM with an appropriate number of communities. In particular, as is evident from the sharp minimax rates of community detection in Hamming Loss \citep{gao2016community}, there exists procedures which can adapt to the increasing complexity of the DCSBM. Consequently, while considering inference question (i), the choice to always work with DCSBM seems natural. However, in context of inference problem (ii) it might actually be crucial to determine whether to fit a SBM or a DCSBM. 
		In particular, analyses beyond community recovery, such as estimation of inter-connectivity strength between and across communities, require more precise differentiation between and SBM and DCSBM. For example, estimation of the matrix $P_n$ is simpler and standard for SBM (see Section \ref{section:estimation_ab} for more details) whereas optimal estimation of the same for DCSBM often requires more  nuanced understanding of the degree corrections. Consequently, fitting (i.e. a complete analysis of the model beyond only community recovery) every network dataset as a DCSBM runs the risk of over-fitting.
		Finally, the problem of choosing between SBM and DCSBM has already been alluded to in the original paper of   \cite{karrer2011stochastic} on DCSBM. In particular, \cite{karrer2011stochastic} noted, and we quote, ``We have tested our algorithms on real-world networks
		ranging in size from tens to tens of thousands of vertices. In networks with highly homogeneous degree distributions we find little difference in performance between
		the degree-corrected and uncorrected blockmodels..." This already points at a necessity of testing for model selection -- which aims to rigorously quantify the lack of difference found in fitting both models 
		by \cite{karrer2011stochastic}.
	}
	
	A natural instinct at this point is to use a likelihood ratio test (LRT) for goodness of fit. However, 
	classical asymptotics of LRTs for goodness of fit are no longer immediately valid in this case, due to the divergence in the number of parameters. This concern had been raised classically by \cite{fienberg1981comment}, who emphasize the need for proper model selection criteria in the context of the $p_1$ model, which exhibits similar features. In our context, this issue was partially addressed by \cite{yan2014model}, who use techniques motivated by statistical physics to approximate the likelihood, and derive valid sampling distributions for the test statistic.  Following the work of \cite{yan2014model}, some other model selection approaches have also been introduced (see e.g. \cite{peixoto2015model}, \cite{yan2016bayesian}). 
	In this paper, we formalize and study the goodness of fit testing problem between a SBM and a DCSBM, in the challenging regime where the degree corrections are rare and weak. To present simple formula of sharp detection thresholds, we focus on the case when both $P_n(\ell,\ell')$ are the same for all $\ell\neq \ell'$  and $P_n(\ell,\ell)$ are equal for all $\ell=1,\ldots,k$ (see Section \ref{section:estimation_ab} for generalizations). In particular, given a partition, $\C=\{\tilde{S}_1,  \cdots, \tilde{S}_k\}$ of $[n] = \tilde{S}_1 \cup \cdots \cup \tilde{S}_k$, with $\tilde{S}_{\ell} = \{ i \in [n]: z(i) = \ell \}$, we consider the DCSBM specified by the probability measure $\Ptheta^{(\C)}$ defined by
	\be
	\Ptheta^{(\C)} [ \{i,j\} \in E] 
	= \begin{cases}
		\theta_i \theta_j \frac{a}{n}\wedge 1 \quad {\rm{if }}\, z(i)=z(j) \nonumber \\
		\theta_i \theta_j \frac{b}{n}\wedge 1 \quad {\rm{o.w.}} \nonumber
	\end{cases}\label{eq:model_main}
	\ee
	The expectation and variance operators under model \eqref{eq:model_main} will be denoted by $\Etheta^{(\C)}$ and $\mathrm{Var}^{(\C)}_{\mathbf{\Theta},a,b}$ respectively. In the sequel, whenever $\C$ is clear from the context, we drop the notational dependence of the above quantities on $\C$. Finally, we note that in the model above, $0<b < a <n$ are sequences dependent on $n$.  We assume throughout that $ 0< \liminf \frac{b}{a} \leq \limsup \frac{b}{a} <1$ and define $\tau_a := \lim_{n \to \infty} \frac{a}{n}$ and $\tau_b := \lim_{n \to \infty} \frac{b}{n}$.  

	To state this problem of choosing between SBM and DCSBM formally, we define the parameter space 
	\be
	\Xi(s,A):=\left\{\begin{array}{c}\mathbf{\Theta}\in \mathbb{R}_{+}^n: |S(\mathbf{\Theta})|=s, \theta_i\ge 1+A, i\in S(\mathbf{\Theta})\end{array}\right\}, \label{eq:parameterspace}
	\ee
	where $A>0$ and for any vector $\mathbf{\Theta}\in \mathbb{R}_+^n$ we let $S(\mathbf{\Theta}) = \{i : \theta_i \neq 1\}$. Note that $A>0$ will correspond to a one sided alternative of the presence of some high degree vertices compared to the rest (discussions on two sided alternatives can be found in Section \ref{section:estimation_ab}). The vertices $i \in S(\mathbf{\Theta})$ can be interpreted as the ``popular" vertices. We will concentrate on the regime when $s = o(n)$, so that the deviations from the null are rare.
	Finally, we observe that once we specify the number of communities $k=o(\log n)$, and consider $s =o(n)$, the DCSBM \eqref{eq:DCSBM} is automatically identifiable, and we do not need the added normalization described above. \textcolor{black}{This will be clear from Section \ref{section:estimation_ab} where we provide consistent estimators of $a,b$ for the case $s=o(n)$.}
	
	%
	\cite{karrer2011stochastic} emphasized that in many real networks, these ``popular" vertices are comparatively rare, and ensuring their correct classification is often more challenging. 
	Therefore, mathematically we consider the following sequence of hypothesis testing problems
	\be 
	H_0: \mathbf{\Theta} =\mathbf{1} \quad \textrm{vs.} \quad H_1: \mathbf{\Theta} \in \Xi(s_n, A_n) \subset \mathbb{R}_+^n\setminus \{\mathbf{0}\} \label{eqn:hypo}
	\ee
	for any pair of sequences $s_n, A_n$. Throughout we shall refer to $\mathbf{\Theta}$ as the signals and parametrize signal sparsity $s_n=n^{1-\alpha}$ with $\alpha \in (0,1)$. Setting $\mathbf{Y}=(y_{ij})_{ij=1}^n$ to denote the adjacency matrix of the random graph, a statistical test for $H_0$ versus $H_1$  is a measurable $\{0,1\}$ valued function of the data $\bY$, with $1$ denoting the rejection of the null hypothesis $H_0$ and $0$ denoting the failure to reject $H_0$. For any given class of partitions $\mathcal{S}^{\nu}_k$ of $[n]$ into $k$ communities (see Section \ref{section:notation_assumptions} assumption (I) for the definition of the  notation $\mathcal{S}^{\nu}_k$), we define the worst case risk of a test $T_n(\bY)$ is defined as  
	\be 
	\ &	\mathrm{Risk}_n(T_n,\Xi (s_n, A_n),\mathcal{S}_k^{\nu})\\&:= \max_{\mathcal{C} \in \mathcal{S}^{\nu}_k}\P^{(\C)}_{\mathbf{1}, a, b}\left(T_n=1\right)+\sup_{\mathbf{\Theta} \in \Xi(s_n, A_n)} \max_{\mathcal{C}\in \mathcal{S}^{\nu}_k}\P^{(\C)}_{\mathbf{\Theta},a,b}\left(T_n=0\right).
	\label{eq:risk}
	\ee
	A sequence of tests $T_n$ corresponding to a sequence of model-problem pairs \eqref{eq:model_main}-\eqref{eqn:hypo}, is said to be asymptotically powerful (respectively asymptotically  powerless) against $\Xi(s_n ,A_n)$ if $\limsup\limits_{n\rightarrow \infty}\mathrm{Risk}_n(T_n,\Xi(s_n, A_n),\mathcal{S}^{\nu}_k)= 0$ $\text{ (respectively }\liminf\limits_{n\rightarrow \infty}\mathrm{Risk}_n(T_n,\Xi(s_n, A_n),\mathcal{S}^{\nu}_k)=1).$ \textcolor{black}{For the sake of conveying the main ideas in a clear and concise way, we shall initially provide the analysis assuming $a,b$ are known. However, all our results can be seamlessly extended to the case of unknown $a,b$ and we discuss some simple methods to take care of this in Section \ref{section:estimation_ab}.}

	\tcr{The results in this paper explore  the smallest deviations necessary for statistical procedures to detect the ``inhomogeneity" of the vertex degrees. To our knowledge, this is one of few instances where sharp detection thresholds have been achieved in the presence of a high dimensional nuisance parameter (in this case, the unknown community assignment $\C$). Moreover, our results hint at some peculiar features in this detection problem. In particular, it is natural to expect that optimal tests about degree correction parameters should be naturally based on the degree vector of the observed graph. Although this intuition leads to correct asymptotic rates of detection thresholds, it surprisingly might fail to yield the correct sharp constants in certain cases.  Our main results, as summarized below, are aimed at capturing the sharp behaviors of  the detection thresholds in this context. }
	
	\begin{enumerate}
		\item [(a)]  For dense degree corrections ($\alpha\leq \frac{1}{2}$), detection thresholds are standard and easy to derive and is attainable by simply considering the total number of edges in the graph (see Theorem \ref{thm:dense}).

		\item [(b)] For sparse degree corrections ($\alpha>\frac{1}{2}$), we first provide asymptotic detection thresholds with sharp constants whenever $a,b\gg \log{n}$. The optimal tests are based on  Higher Criticism or Maximum Degree type tests based on a linear combination of within and across (estimated) community degrees. Also, unlike \cite{mms2016}, the analysis of the lower bound on detection thresholds depends on a more delicate truncated second moment method argument. In particular, the truncation event depends crucially on the community assignment $\C$ (see Theorem \ref{thm:sparse_lower}). 
		
		\item [(c)] \textcolor{black}{For sparse degree corrections $(\alpha>\frac{1}{2})$ with $a,b\lesssim \log{n}$, a different behavior of the testing problem emerges. Here  a version of the Maximum Degree Test with different rejection region is optimal (see Theorem \ref{thm:lower_below_logn}).} 
		
		\item [(d)] For the case of balanced communities, whether it is possible to attain the sharp optimal thresholds using tests based on only the degrees of the nodes, depends heavily on the density of the graph. Theorem \ref{thm:sparsesignal_vanilla_upper} collects these results which are summarized below. 
		
		\begin{enumerate} 
			\item [(I)] For non-dense graphs $(\tau_a=\tau_b=0)$, sharp detection thresholds are attainable without knowledge of the unknown $\C$ using either the degree based Higher Criticism Test (introduced in \cite{mms2016}) or the Maximum Degree Test (for $\alpha>\frac{3}{4}$). See Theorem \ref{thm:sparsesignal_vanilla_upper_gen} and Theorem \ref{thm:sparsesignal_vanilla_upper}.
			
			\item [(II)] In contrast, for dense graphs $(\tau_a>\tau_b>0)$ the Maximum Degree Test surprisingly fails to attain the sharp detection thresholds for extremely sparse degree corrections i.e. $\alpha>\frac{3}{4} $ (See Theorem \ref{thm:sparsesignal_vanilla_upper}\ref{thm:sparsesignal_vanilla_max_lower}).  The optimal procedure runs in two stages. It involves running a suitable community recovery algorithm in stage 1, followed by a Higher Criticism Test based on a linear combination of within and across (estimated) community degrees in stage 2. See Theorem \ref{thm:sparsesignal_optimal}.

			
		\end{enumerate}	
		
		\item [(e)] In order to demonstrate the failure of Maximum Degree Test in certain regimes, we derive asymptotic distribution of the Maximum Degree of SBM's with two balanced communities and connection probabilities satisfying $a\geq b\gg(\log{n})^3$. This is achieved by using a version of Stein's Method for Poisson approximation \citep{barbour1992poisson}.  See Theorem \ref{thm:max_deg_null}.

		\item [(f)] Our results rely on some detailed computations of both log-scale and exact exponential scale asymptotics of positive linear combinations of sparse Binomial random variables in both moderate deviation and local central limit theorem scales. Since not directly obtainable from standard references, these results (see Section \ref{sec:binom}) might be of independent interest.

	\end{enumerate} 
	

	\subsection*{Assumptions and Notation}\label{section:notation_assumptions} In this subsection we collect some assumptions and notation which drive the exposition of the rest of the paper. 
	
	We first describe some of the assumptions  -- which will be generalized in Section \ref{section:estimation_ab}. The assumptions help us present sharpest results in their cleanest forms -- while they capture the essential elements needed to generalize them to more general scenarios (see Section \ref{section:estimation_ab} for details). 
	\begin{enumerate}
		\item [(I)] \textbf{Comparable Community Sizes:} 
		We denote by $\mathcal{S}^{\nu}_k$ as the set of all partitions $\mathcal{C}=\{\tilde{S}_1,\ldots,\tilde{S_k}\}$ such that $|\tilde{S_l}|\in [n/k\nu,n\nu/k]$ for all $l=1,\ldots,k$ and some fixed $\nu\geq 1$. We shall always assume $\mathcal{C}\in \mathcal{S}^{\nu}_k$ for some $\nu\geq 1$. 
		
		\item [(II)]\textbf{Bounded Number of Communities:} We assume $k$ is fixed as $n\rightarrow \infty$.
		
		\item [(III)] \textbf{Given Assortative Non-vanishing $a,b$:}   We assume that $ 0< \liminf \frac{b}{a}\\ \leq \limsup \frac{b}{a} <1$ and define $\tau_a := \lim_{n \to \infty} \frac{a}{n}$ and $\tau_b := \lim_{n \to \infty} \frac{b}{n}$. We shall also assume that both $\liminf a,\liminf b>0$ and also (initially) assume that $a,b$ are given.
	\end{enumerate}
	Once again we will like to point out that assumption (III) about the knowledge of $a,b$ is for the simplification of exposition while capturing the essence of the main ideas and resuls. We discuss in detail in Section \ref{section:estimation_ab} how the case of unknown $a,b$ can be dealt with without changing the results. As it will turn out, the case of known $a,b$ captures the essential subtleties and ingredients for the more general case as well -- and therefore we initially focus on this case. Also, as mentioned earlier, some of these assumptions are for getting sharp constants and phase transitions in the paper. It will be clear from the results that assumption (I) with exact balanced communities (i.e. $\nu=1$) is only relevant for $a,b$ exactly order $n$ and even in that case,this assumption is irrelevant (and only comparable size of communities is relevant) whenifne is only interested in optimal rates. 
	About assumption (II), we also note that under comparable community sizes, the case of $k\rightarrow \infty$ is much easier to handle since the within community degrees contribute much less. 
	
	We next describe the notation which is followed throughout the paper. First we refer to elements of $\mathcal{S}_k^{\nu}$ as community assignments. In this regard we say two community assignments $\mathcal{C}_1=\{\tilde{S}^{(1)}_1,\ldots,\tilde{S}_k^{(1)}\} $ and $\mathcal{C}_2=\{\tilde{S}_1^{(2)},\ldots,\tilde{S}_k^{(2)}\}$ are equivalent if $\inf_{\pi\in \Pi_k}\sum_{l=1}^k\I(\tilde{S}_l^{(1)}\neq \tilde{S}_{\pi(l)}^{(2)})=0$ where $\Pi_k$ contains all permutations of $\{1,\ldots,k\}$ and $\I(\cdot)$ will always stand for the indicator function. 
	For any $n \in \mathbb{N}$, we let $[n]=\{1,\ldots,n\}$. For any $i\in [n]$ we denote the degree of vertex $i$ by $d_i:=\sum_{j=1}^nY_{ij}=\sum_{j\ne i}Y_{ij}.$ 
	Throughout $\Bin(n,p)$ will stand for a generic binomial random variable with $n\in \mathbb{N}$ trials and success probability $p \in [0,1]$. The results in this paper are mostly asymptotic in nature and thus requires some standard asymptotic  notations. If $a_n$ and $b_n$ are two sequences of real numbers then $a_n \gg b_n$ (and $a_n \ll b_n$) implies that ${a_n}/{b_n} \rightarrow \infty$ (respectively ${a_n}/{b_n} \rightarrow 0$) as $n \rightarrow \infty$. Similarly $a_n \gtrsim b_n$ (and $a_n \lesssim b_n$) implies that $\liminf{{a_n}/{b_n}} = C$ for some $C \in (0,\infty]$ (and $\limsup{{a_n}/{b_n}} =C$ for some $C \in [0,\infty)$). Alternatively, $a_n=o(b_n)$ will also imply $a_n \ll b_n$ and $a_n=O(b_n)$ will imply that $\limsup{{a_n}/{b_n}} =C$ for some $C \in [0,\infty)$). 
	We write $a_n \sim b_n$ if $\lim \frac{a_n}{b_n}= 1$ and $a_n=\Omega(b_n)$ if $\liminf a_n/b_n\geq C$ for some constant $C>0$. 
	We need the following function to define our detection thresholds. For $\beta_1,\beta_2>0$ and $k\in \mathbb{N}$ let 
	\be
	\ & \rho_k(\beta_1,\beta_2)\\&=\left[ \frac{( \frac{\beta_1^2}{k}\tau_a (1 - \tau_a) +  \frac{\beta_2^2(k-1)}{k}\tau_b(1- \tau_b) ) (\frac{1}{k} \tau_a (1 - \tau_a) + \frac{(k-1)}{k}\tau_b (1- \tau_{b}) ) }{(\frac{\beta_1}{k} \tau_a +  \frac{\beta_2(k-1)}{k}\tau_b )^2} \right],
	\\ \label{eqn:rho_def}
	\ee
	where $\tau_a,\tau_b$ are defined earlier. \textcolor{black}{We adopt the convention that $\rho_k(\beta_1,\beta_2)=k(\beta_1^2+\beta_2^2(k-1))/(\beta_1+(k-1)\beta_2)^2$ for $\tau_a,\tau_b=0$.} Further, we shall always assume $b<a\leq \frac{n}{2}$ for concreteness, although the particular choice of $n/2$ can be easily replaced by $cn$ for any fixed $c\in (0,1)$. Also, throughout we drop the subscript $n$ whenever it is understood that $s, A$ are allowed to vary with $n$.

	\section{Tests}\label{section:tests} 
	In this section we formally 
	describe the testing procedures to be used. In order to construct these tests we begin with a few definitions \tcr{and recall that $a,b$ are considered to be known for now with the case of unknown $a,b$ discussed in Section \ref{section:estimation_ab}}. 
	
	Fix any community assignment $\mathcal{C}=\{\tilde{S}_1,\ldots,\tilde{S}_k\}\in \mathcal{S}^{\nu}_k$ for some $\nu>0$ and let $|\tilde{S}_l|$ denote the size of the $l^{\mathrm{th}}$ community. For any $i \in [n]$, let $\C(i) = \tilde{S}_l$ if $i \in \tilde{S}_l$ for some $l=1,\ldots,k$. Define the within-group-degree of a vertex $i\in \tilde{S}_l$ to be $d_i(1,\tilde{S}_l) = \sum_{j \in \C(i)} Y_{ij}$. Similarly set the across-group-degree of a vertex $i$  to be $d_i(2,\tilde{S}_l) = \sum_{j \in \C(i)^c} Y_{ij}$. 
	Define for $i\in \tilde{S}_l$
	\begin{align}
	\muonenull(\tilde{S}_l):=\Ezero^{(\mathcal{C})} \Big[d_i(1,\tilde{S}_l) \Big] = \left(|\tilde{S}_l|-1\right) \cdot \frac{a}{n},\nonumber \\ \mutwonull(\tilde{S}_l):=\Ezero^{(\mathcal{C})} \Big[d_i(2,\tilde{S}_l) \Big]= (n-|\tilde{S}_l|) \cdot \frac{b}{n}, \nonumber \\
	\mathrm{Var}^{(\mathcal{C})}_{\mathbf{1},a,b} (d_i(1,\tilde{S}_l)) = \left(|\tilde{S}_l|-1\right) \cdot \frac{a}{n} \cdot \Big(1- \frac{a}{n} \Big),\nonumber\\ \mathrm{Var}^{(\mathcal{C})}_{\mathbf{1},a,b} (d_i(2,\tilde{S}_l)) =   (n-|\tilde{S}_l|) \cdot \frac{b}{n} \cdot \Big( 1 -\frac{b}{n} \Big),\nonumber 
	\end{align}
	and  note that under $H_0$ the above quantities (i.e. the means and variances) depend on $\mathcal{C}$ only through $|\tilde{S}_l|$ (which under assumption I for $\mathcal{S}^{\nu}_k$ in Section \ref{section:notation_assumptions} belongs in the interval $[n/k\nu,n\nu/k]$ with $\nu\geq 1$ for all $i=1,\ldots,n$). 
	
	Finally, for any fixed positive constants $\beta_1$ and $\beta_2$ define for $i\in \tilde{S}_l$
	
	\be 
	D_i(\tilde{S}_l,\beta_1,\beta_2):=\frac{\beta_1(d_i(1,\tilde{S}_l) - \muonenull(\tilde{S}_l)) + \beta_2 (d_i(2,\tilde{S}_l) - \mutwonull(\tilde{S}_l))}{\sigmazero(\tilde{S}_l,\beta_1,\beta_2)}, 
	\ee
	where 
	\be 
	\sigmazero(\tilde{S}_l,\beta_1,\beta_2):=\sqrt{\beta_1^2 \mathrm{Var}^{(\mathcal{C})}_{\mathbf{1},a,b} (d_i(1,\tilde{S}_l)) + \beta_2^2 \mathrm{Var}^{(\C)}_{\mathbf{1},a,b} (d_i(2,\tilde{S}_l))}.
	\ee
	We are now ready to define our testing procedures.
	\begin{description}[align=left]\itemsep15pt
		\item [\textbf{Total Degree Test} :] 
		This test is based on the total degree in the observed graph i.e. $\sum_{i=1}^n d_i$. The test rejects when the observed total degree is large. The calibration of this test can be achieved by looking at the behavior of $\sum_{i=1}^n d_i$ under the null hypothesis in \eqref{eqn:hypo}. More precisely, by the Total Degree Test we mean a testing procedure which rejects when $\sum_{i=1}^n d_i$ is large (See proof of Theorem \ref{thm:dense}\ref{thm:dense_upper}).

		\item [\textbf{The Higher Criticism Tests} :]

		For any $\beta_1,\beta_2>0$, a partition $\mathcal{C}=\{\tilde{S}_1,\ldots, \tilde{S}_k\}$ of  $[n]$, and $t>0$ let 
		\be 
		HC(\hcpar;t):=\sum_{l=1}^k\sum_{i\in \tilde{S}_l} \left(\I\left(D_i(\tilde{S}_l,\beta_1,\beta_2)>t\right)-\Pzero^{(\C)}\left(D_i(\tilde{S}_l,\beta_1,\beta_2)>t\right)\right).
		\ee
		
		We then construct a version of the higher criticism test as follows. Define
		\be 
		HC(\hcpar):=\sup\left\{\begin{array}{c}GHC(\hcpar;t):=\frac{HC(\hcpar;t)}{\sqrt{\mathrm{Var}_{\mathbf{1},a,b}\left(HC(\hcpar;t)\right)}},\\ t\in \{\sqrt{2r\log{n}}:r\in (0,5)\}\cap \mathbb{N}\end{array}\right\}. 
		\ee
		By the Higher Criticism Test based on $HC(\hcpar)$ we then mean a testing procedure that rejects when the observed value of $HC(\hcpar)$ defined above is large. In particular, we let $T_{HC}(\C,\beta_1,\beta_2)$ be the test that rejects when $HC(\C,\beta_1,\beta_2)>\sqrt{\log{n}}$. Note that for any two equivalent community assignments $\C,\C'\in \mathcal{S}^{\nu}_k$ one has $HC(\C,1,1)=HC(\C',1,1)$. In particular, balanced communities case i.e. $\mathcal{S}_{k}^{\nu}$ for $\nu=1$ the construction and calibration of the tests $HC(\C,1,1)$ is free of $\C$.	\textcolor{black}{Hence while considering balanced communities, any such test is referred to as the test based on $HC(1,1)$ (we do not use the dependence of this test on $k$ in the notation).  It is easy to see the test based on $HC(1,1)$ corresponds to the degree based Higher Criticism Test introduced in \cite{mms2016} and will be also referred to as the vanilla Higher Criticism Test.} \tcr{Proofs of Theorem \ref{thm:sparsesignal_vanilla_upper_gen}\ref{thm:sparsesignal_vanilla_hc_gen} and Theorem \ref{thm:sparsesignal_vanilla_upper}\ref{thm:sparsesignal_vanilla_hc} provide detailed analyses of this class of Higher Criticism Tests. Although the plan of the analyses are standard and motivated by those in \cite{mms2016}, the actual proofs are somewhat tedious due to the necessity to consider arbitrary positive linear combinations of within and across community degrees. In particular, we derive relevant local central limit theorem type asymptotics crucial to these analyses in Section \ref{sec:binom}. }

		\item [\textbf{The Maximum Degree Tests} :] For any $\beta_1,\beta_2>0$, and partition $\mathcal{C}=\{\tilde{S}_1,\ldots, \tilde{S}_k\}$ of  $[n]$ , by the Maximum Degree Test based on $d_{\max}(\C,\beta_1,\beta_2)$ we mean the procedure that rejects for large values of $d_{\max}(\C,\beta_1,\beta_2):=\max_{l=1}^k\max_{i\in \tilde{S}_l}D_i(\tilde{S}_l,\beta_1,\beta_2)$. In particular, for any $\delta>0$, we let $T_{d_{\max}}(\C,\beta_1,\beta_2,\delta)$ to be the test that rejects when $\max_{l=1}^k\max_{i\in \tilde{S}_l}D_i(\tilde{S}_l,\\ \beta_1,\beta_2)>\sqrt{2(1+\delta)\log{n}}$.
		Note that for any two equivalent community assignments $\C=\{\tilde{S}_1,\ldots, \tilde{S}_k\},\C'=\{\tilde{S}_1',\ldots, \tilde{S}_k'\}\in \mathcal{S}^{\nu}_k$, $\max_{l=1}^k\max_{i\in \tilde{S}_l}\\D_i(\tilde{S}_l,\beta_1,\beta_2)=\max_{l=1}^k\max_{i\in \tilde{S}_l'}D_i(\tilde{S}_l',\beta_1,\beta_2)$. In particular, balanced communities case i.e. $\mathcal{S}_{k}^{\nu}$ for $\nu=1$ the construction and calibration of the tests $d_{\max}(\C,1,1)$ is free of $\C$.	\textcolor{black}{Hence while considering balanced communities, any such test is referred to as
			the test based on $d_{\max}(1,1)$. It is easy to see the test based on $d_{\max}(1,1)$ is simply the test that rejects for large values of of the maximum degree $d_{\max}:=\max\{d_1,\ldots,d_n\}$ and will be also referred to as the vanilla Maximum Degree Test.} \tcr{Proofs of Theorem \ref{thm:sparsesignal_vanilla_upper_gen}\ref{thm:sparsesignal_vanilla_max_upper_gen}, Theorem \ref{thm:sparsesignal_vanilla_upper}\ref{thm:sparsesignal_vanilla_max_upper}, and Theorem \ref{thm:sparsesignal_vanilla_upper}\ref{thm:sparsesignal_vanilla_max_lower} provide detailed analyses of this class of Maximum Degree Tests. A crucial component in our proofs is the derivation of the asymptotic distribution of $d_{\max}(1,1)$ in Theorem \ref{thm:max_deg_null} which, in turn, relies on exponential scale asymptotics for a class of sums of sparse Binomial random variables in the moderate deviation regime (see Lemma \ref{lemma:binomial_tail_exp_scale}).}

	\end{description}

	\section{Main Results}\label{section:main_results}
	In this section we present the main results of the paper along with their implications. The proofs of these results can be found in the supplementary material \citep{ms2019}. Owing to the differential behavior of the detection problem, we divide our presentation into two main subsections based on the signal sparsity $\alpha$.

	\subsection{Dense Signal Regime $\alpha \leq \frac{1}{2}$} 
	The behavior of the detection problem in the dense signal $\alpha\leq \frac{1}{2}$ regime is particularly simple. Intuitively, since there are many vertices in the graph which have a higher connection probability than under the null hypothesis, under the dense signal regime a natural test is the Total Degree Test introduced in Section \ref{section:tests}. This intuition indeed turns out to be correct in the sense that no other test works when the Total Degree Test fails. The next theorem makes this precise.
	\begin{theorem}\label{thm:dense}
		Fix $0 < \alpha \leq \frac{1}{2}$ and set $\cdense(\alpha) = \frac{1}{2} - \alpha$. Then the following hold for $\mathcal{S}^{\nu}_k$  with any $\nu\geq 1$.
		\begin{enumerate}[label=\textbf{\roman*}.]
			\item\label{thm:dense_upper} The Total Degree Test is asymptotically powerful if
			\begin{align}
			A \geq \frac{n^{-r}}{\sqrt{a}}, \,\,\, r < \cdense(\alpha). \nonumber 
			\end{align}
			\item \label{thm:dense_lower}All tests are asymptotically powerless if
			\begin{align}
			A \leq \frac{n^{-r}}{\sqrt{a}}, \,\,\, r > \cdense(\alpha). \nonumber 
			\end{align}
		\end{enumerate}
	\end{theorem}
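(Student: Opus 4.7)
The two parts of Theorem \ref{thm:dense} call for complementary tools: a direct moment computation for (\ref{thm:dense_upper}) and a $\chi^2$-divergence lower bound for (\ref{thm:dense_lower}). I will sketch them in turn, together with the single technical step that is the main hurdle.

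\textbf{Upper bound.} I would take $T = \sum_{i=1}^n d_i = 2\sum_{i<j} Y_{ij}$ and apply Chebyshev. Since the $Y_{ij}$ are independent Bernoullis under both hypotheses, under $H_0$ one directly computes $\E_{\mathbf{1}}^{(\C)} T = (n-1)a/2 + nb/2$ and $\mathrm{Var}_{\mathbf{1}}^{(\C)}(T) \asymp n(a+b)$, both independent of $\C$. For any $\mathbf{\Theta} \in \Xi(s, A)$ and any $\C$, the mean shift
\begin{equation*}
\E_{\mathbf{\Theta}}^{(\C)} T - \E_{\mathbf{1}}^{(\C)} T \;=\; 2\sum_{i<j}(\theta_i\theta_j - 1)\, p_{ij}^{(\C)}, \qquad p_{ij}^{(\C)} \in \{a/n,\, b/n\},
\end{equation*}
I would lower bound by restricting to the $s(n-s) \asymp sn$ pairs with exactly one endpoint in $S(\mathbf{\Theta})$, each contributing at least $A \cdot b/n$, so shift $\gtrsim A s (a+b)$. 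Since $\mathrm{Var}_{\mathbf{\Theta}}^{(\C)}(T) \leq \E_{\mathbf{\Theta}}^{(\C)} T = \E_{\mathbf{1}}^{(\C)}T + \mathrm{shift}$ for Bernoulli sums, Chebyshev yields asymptotic power whenever $s A \sqrt{(a+b)/n} \to \infty$. Substituting $s = n^{1-\alpha}$, $A = n^{-r}/\sqrt{a}$, and $b \asymp a$, this becomes $n^{1/2 - \alpha - r} \to \infty$, i.e., $r < \cdense(\alpha)$.

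\textbf{Lower bound.} My plan is a Bayes $\chi^2$ argument. Fix an arbitrary partition $\C^*$ with $|\C^*| = n/2$, and consider the prior $\pi$ on $\Xi(s,A)$ that samples $S \subset [n]$ uniformly over $s$-subsets and then sets $\theta_i = 1+A$ on $S$ and $\theta_i = 1$ off $S$; note that $\pi$ is supported in $\Xi(s,A)$. Lower bounding the minimax risk by the Bayes risk against $(\C^*, \pi)$, it suffices to show
\begin{equation*}
\chi^2\bigl(\P_\pi^{(\C^*)} \,\big\|\, \P_{\mathbf{1}}^{(\C^*)}\bigr) \;\longrightarrow\; 0.
\end{equation*}
A short Bernoulli calculation gives the factorized identity
\begin{equation*}
1 + \chi^2 \;=\; \E_{S, S'}\prod_{i<j}\!\Bigl(1 + (\theta_i\theta_j - 1)(\theta'_i\theta'_j - 1)\,\tfrac{p_{ij}^{(\C^*)}}{1 - p_{ij}^{(\C^*)}}\Bigr),
\end{equation*}
for independent draws $S, S' \sim \pi$. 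Using $1+x \leq e^x$, $p_{ij}^{(\C^*)}/(1 - p_{ij}^{(\C^*)}) \leq 2a/n$, the expansion $\theta_i\theta_j - 1 = A(\mathbf{1}[i\in S] + \mathbf{1}[j \in S]) + A^2 \mathbf{1}[i \in S, j \in S]$, and the pair-counting identity $\sum_{i \neq j}(\mathbf{1}[i\in S]+\mathbf{1}[j\in S])(\mathbf{1}[i\in S']+\mathbf{1}[j\in S']) = 2|S||S'| + 2(n-2)|S\cap S'|$, I would bound $1 + \chi^2 \leq \exp(CA^2 a s^2/n)\,\E_{S,S'}\exp(CA^2 a\, |S\cap S'|)$ for an absolute $C$. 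Stochastic domination of $|S \cap S'|$ by $\Bin(s, s/n)$ gives $\E\exp(t|S\cap S'|) \leq \exp(\tfrac{s^2}{n}(e^t - 1))$, and with $t = CA^2 a = o(1)$ this reduces to $\exp(O(A^2 a s^2/n))$. Substituting parameters, $A^2 a s^2 / n = n^{1 - 2\alpha - 2r}$, which is $o(1)$ precisely when $r > \cdense(\alpha)$.

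\textbf{Main obstacle.} The crux is the $\chi^2$ expansion: the higher-order $A^3$ and $A^4$ terms produced by expanding $(\theta_i\theta_j - 1)(\theta'_i\theta'_j - 1)$ must be absorbed into the constant $C$, which works cleanly because $A = n^{-r}/\sqrt{a} \to 0$ along the critical curve. A minor secondary point is that $\Xi(s,A)$ in principle allows unbounded $\theta_i$, but the Bernoulli variance bound $\mathrm{Var}(T) \leq \E T$ renders this harmless for the upper bound, since the Chebyshev comparison remains valid even when the shift dominates $\E_{\mathbf{1}}T$. With these pieces in place, both parts reduce to matching the single exponent $1/2 - \alpha - r$, and the threshold $\cdense(\alpha) = 1/2 - \alpha$ at which this exponent vanishes is exactly the crossover.
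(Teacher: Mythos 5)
Both parts are correct. Your part (i) matches the paper's: a Chebyshev comparison of the total degree, lower-bounding the mean shift by $\gtrsim Asb$ and controlling the variance. You use $\mathrm{Var}(T)\le 2\,\E T$ (the factor $2$, from $T=2\sum_{i<j}Y_{ij}$, is dropped in your write-up but immaterial) in place of the paper's Lemma \ref{lemma:var_totaldegree}; this is slightly cleaner, since it holds for all $\Theta\in\Xi(s,A)$ without the paper's reduction to $\|\Theta\|_\infty\le 2$. Your part (ii) is a genuinely different route to the same $\exp\{O(n^{1-2\alpha-2r})\}$ bound. The paper also runs a second-moment argument after fixing $\C^*$, but it uses the split prior (exactly $s/2$ indices in each side of $\C^*$) and then evaluates $\E_{\mathbf 1}[T_{ij}]$ by exhaustive case analysis on the intersection pattern of $(i,j)$ with $S_1,S_2,\C^*$. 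You instead take the uniform prior on $s$-subsets --- equally valid, since it is supported in $\Xi(s,A)$ and a fixed $\C^*$ still lower-bounds the maximum over $\C$ --- and replace the casework by the single factorized Bernoulli identity $\E_{\mathbf 1}\big[L^{(ij)}_S L^{(ij)}_{S'}\big]=1+(\theta_i\theta_j-1)(\theta'_i\theta'_j-1)\,p_{ij}/(1-p_{ij})$, followed by $1+x\le e^x$, $p_{ij}/(1-p_{ij})\le 2a/n$, and the pair-counting identity, which collapse the exponent to $O\big(A^2a(s^2/n+|S\cap S'|)\big)$ in one step. This is more modular and even exact where the paper's casework is only an upper bound (e.g.\ for $i,j\in S\cap(S')^c$ your identity gives $\E_{\mathbf 1}[T_{ij}]=1$, not $1+A^2p/(1-p)$). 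The paper's explicit bookkeeping is recycled in Lemma \ref{lemma:gamma0} for Theorem \ref{thm:sparse_lower}, which likely motivated that choice; for this theorem your argument is the more economical one. The only spot worth fleshing out is the absorption of the $A^3,A^4$ terms into $C$: besides $A\to0$, you also want $|S\cap S'|^2\le s\,|S\cap S'|$ to tame the quartic piece, so that the coefficient multiplying $|S\cap S'|$ in the exponent is $A^2a\cdot O(1+As/n+A^2s/n)=O(A^2a)$ and $C$ remains an absolute constant.
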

	One feature of Theorem \ref{thm:dense} above is that the detection thresholds given by $\cdense$ do not change based on the nature of $\tau_a$ and $\tau_b$. We will see later that this behavior is in stark contrast to that of the detection thresholds in the sparse regime $\alpha > \frac{1}{2}$.

	\subsection{Sparse Signal Regime $\alpha > \frac{1}{2}$} The behavior of detection problem in the sparse signal regime is much more subtle and is substantially different depending on whether $a,b\gg \log{n}$ or $a,b\lesssim \log{n}$. As we shall see, even in the regime $a,b\gg \log{n}$ a further subtlety arises when $a,b=\Omega(n)$ and communities are balanced. Below, we divide our study accordingly.
	
	\subsection{$a,b\gg \log{n}$}
	Intuitively, since we are testing for degree heterogeneity which are sparse in occurrence, one should in principle be able to produce tests similar to those in \cite{mms2016} by looking at abnormal behavior of extreme degrees. Indeed this intuition is captured by the degree based Higher Criticism Test and the Maximum Degree Test studied in \cite{mms2016}. Success of similar tests naturally fits into the narrative that the behavior of the detection problem for degree heterogeneity does not depend on the knowledge of community assignment. Although the heart of this narrative is correct, the implications should be taken with a grain of salt.  In particular, as we argue in this section, this intuition of constructing tests surprisingly fails for completely dense graphs (i.e. when $0<\tau_b<\tau_a$) -- especially for the case of balanced communities. More precisely, for these very dense graphs, the optimal procedures require the knowledge of the community assignments. Although this is problematic at first glance, the experienced reader will immediately realize that when $0<\tau_b<\tau_a$, it is very easy to recover the communities consistently, at least when the degree heterogeneity parameter $\theta_1,\ldots,\theta_n$ are not too rough \citep{gao2016community}.

	To elaborate on this peculiar behavior of the detection problem it is instructive to start with the information theoretic lower bound. We use the notation 
	\be
	\sigma_{n0}^2&=\frac{n}{k}\cdot\frac{a}{n}\left(1-\frac{a}{n}\right)+\frac{(k-1)n}{k}\cdot\frac{b}{n}\left(1-\frac{b}{n}\right) \label{eqn:sigmazero_def}
	\ee
	in all subsequent results.
	
	\begin{thm}\label{thm:sparse_lower}
		Assume $\log{n}\ll b<a$ and $\mathcal{S}^{\nu}_k$ with any $\nu\geq 1$. Let $\alpha>\frac{1}{2}$  and consider the signal strength 
		\be
		A = \sqrt{\frac{C\log n}{\sigmazero^2}}, \label{eq:signal_const}
		\ee
		where $\sigmazero^2$ is defined in \eqref{eqn:sigmazero_def}.
		Then all tests are asymptotically powerless if $C < \csparse(\alpha)$, where 
		\begin{align}
		\csparse(\alpha) = \begin{cases}
		2 \Big( \frac{\frac{1}{k} \tau_a (1-\tau_a) +  \frac{k-1}{k} \tau_b(1-\tau_b)}{\frac{1}{k} \frac{\tau_a}{1- \tau_a} + \frac{k-1}{k} \frac{\tau_b}{1-\tau_b}}  \Big) \Big(\alpha - \frac{1}{2} \Big) \quad {\rm{for}}\,\, \frac{1}{2} < \alpha < \frac{3}{4}. \\
		2 \Big( \frac{ \frac{1}{k}\tau_a(1-\tau_a) +  \frac{k-1}{k} \tau_b(1-\tau_b)}{\frac{1}{k} \frac{\tau_a}{1- \tau_a} + \frac{k-1}{k} \frac{\tau_b}{1-\tau_b}}  \Big) \Big( 1- \sqrt{1-\alpha}\Big)^2 \quad {\rm{for}}\,\, \alpha \geq \frac{3}{4}. 
		\end{cases} \nonumber 
		\end{align}
		In particular, when $\tau_a = \tau_b =0$, the correct constant $\csparse(\alpha)$ is obtained by taking the limit as $\tau_a, \tau_b \to 0$, so that 
		\begin{align}
		\csparse(\alpha) = \begin{cases}
		2 \Big( \alpha - \frac{1}{2} \Big) \quad {{\rm for }} \,\, \frac{1}{2} < \alpha < \frac{3}{4}. \\
		2 \Big( 1 - \sqrt{1- \alpha} \Big)^2 \quad{{\rm for }}\,\, \alpha > \frac{3}{4}.  
		\end{cases}\nonumber 
		\end{align}
	\end{thm}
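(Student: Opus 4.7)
The plan is to establish the lower bound via a second-moment / Le Cam argument with a Bayesian prior on $\mathbf{\Theta}$, adapting the classical Ingster / Donoho-Jin computation to the DCSBM likelihood. Since both terms in the risk \eqref{eq:risk} are maxima over community assignments, it suffices to exhibit hardness under a single fixed community $\mathcal{C}^*=\{1,\ldots,n/2\}$; the null is then a fixed product Bernoulli model. Place the product prior $\pi=\prod_i\pi_1$ with $\pi_1=(1-\epsilon_n)\delta_1+\epsilon_n\delta_{1+A}$ and $\epsilon_n=n^{-\alpha}$. Because $|S(\mathbf{\Theta})|\sim\Bin(n,\epsilon_n)$ concentrates around $s_n$, restricting $\pi$ to $\{|S(\mathbf{\Theta})|\le s_n\}\subset\Xi(s_n,A)$ costs only $o(1)$ in total variation, so it suffices to show $\Ezero[L_\pi^2]\to 1$ where $L_\pi$ is the resulting mixture likelihood ratio; Le Cam's inequality then yields asymptotic powerlessness.

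Because edges are conditionally independent given $\mathbf{\Theta}$, the second moment factorizes as
\be
\Ezero\!\bigl[L_\pi^2\bigr] = \E_{\theta,\theta'\sim\pi}\prod_{i<j}\Bigl(1+(\theta_i\theta_j-1)(\theta_i'\theta_j'-1)\,\omega_{ij}\Bigr),
\ee
where $\omega_{ij}:=p_{ij}^{(0)}/(1-p_{ij}^{(0)})$ equals $\omega_a:=(a/n)/(1-a/n)$ within $\mathcal{C}^*$ and $\omega_b:=(b/n)/(1-b/n)$ across. Writing $u_i=\I\{\theta_i=1+A\}$, we have $\theta_i\theta_j-1=A(u_i+u_j)+A^2u_iu_j$; the cross term involves only pairs with $u_i=u_j=1$, contributing a $1+o(1)$ multiplicative correction because $s_n^2/n=o(1)$ for $\alpha>1/2$, and may be dropped. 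Using $1+x\le e^x$ and integrating the $u'$ coordinates one at a time reduces the problem to
\be
\Ezero\!\bigl[L_\pi^2\bigr]\le(1+o(1))\,\E_u\prod_{i'}\Bigl(1-\epsilon_n+\epsilon_n\exp\bigl(A^2(Mu)_{i'}\bigr)\Bigr),
\ee
where the symmetric matrix $M$ has $M_{ii'}=\omega_{ii'}$ for $i\ne i'$ and $M_{ii}=\sum_{j\ne i}\omega_{ij}$, and $u$ has i.i.d. $\mathrm{Ber}(\epsilon_n)$ coordinates.

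For the moderately sparse regime $\tfrac12<\alpha<\tfrac34$, linearization is valid: typical values of $(Mu)_{i'}$ are $O(n\epsilon_n\bar\omega)\ll 1/A^2$, so $\exp(A^2(Mu)_{i'})=1+A^2(Mu)_{i'}+O(A^4(Mu)_{i'}^2)$, and straightforward bookkeeping gives $\log\Ezero[L_\pi^2]\sim (C/Q(\tau_a,\tau_b)-(2\alpha-1))\log n$ via the limit $S_*/\sigmazero^2\to 1/Q(\tau_a,\tau_b)$, with $S_*:=(n/2-1)\omega_a+(n/2)\omega_b$. Thus $\Ezero[L_\pi^2]\to 1$ precisely when $C<2Q(\tau_a,\tau_b)(\alpha-\tfrac12)$. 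For the very sparse regime $\alpha\ge\tfrac34$, naive linearization fails because the expectation over $u$ is dominated by atypical configurations with $\sum_iu_i\gg n\epsilon_n$ and correspondingly large $(Mu)_{i'}$. Following the standard Donoho-Jin truncation device (as used in \cite{mms2016} for the $\theta_i\equiv 1$ SBM), replace $\pi$ by its conditional $\tilde\pi$ given an event $E_n$ that controls $\max_{i'}|(Mu)_{i'}|$ at a moderate-deviation scale matching $\sqrt{2(1-\sqrt{1-\alpha})^2\log n}/A$, chosen so that $\pi(E_n^c)=o(1)$ (hence the truncation is free in the Le Cam step) while the truncated exponential moment stays $1+o(1)$ whenever $C<2Q(\tau_a,\tau_b)(1-\sqrt{1-\alpha})^2$. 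The limiting case $\tau_a=\tau_b=0$ is recovered by continuity.

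The main obstacle I anticipate is the truncation step for $\alpha\ge\tfrac34$: the event $E_n$ must be tuned to suppress exactly the rare configurations that inflate the bilinear exponential moment while keeping $\pi(E_n^c)=o(1)$, and the Donoho-Jin constant $(1-\sqrt{1-\alpha})^2$ must emerge correctly under the heterogeneous weighting produced by $\omega_a\ne\omega_b$ so that the prefactor $Q(\tau_a,\tau_b)$ appears as claimed. A secondary technicality is a moderate-deviation approximation equating the Binomial per-vertex log-likelihood ratio with a Gaussian shift of variance $\sigmazero^2$ throughout the range of deviations contributing to $L_\pi$, ensuring that the classical Donoho-Jin phase structure survives the discreteness of $Y_{ij}$ in both the dense and sparse graph regimes.
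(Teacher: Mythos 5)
There is a genuine gap in the $\alpha\ge\tfrac34$ case, and it concerns the nature of the truncation you propose. Your event $E_n$ is a condition on the prior draw $u$ alone, namely $\max_{i'}|(Mu)_{i'}|$ small. But the divergence of $\E_0[L_\pi^2]$ for $\alpha\ge\tfrac34$ is driven by the diagonal entries $M_{jj}=\sum_{i\ne j}\omega_{ij}$: for any signal vertex $j$ (i.e.\ $u_j=1$), $(Mu)_j\ge M_{jj}$, and $M_{jj}$ is a deterministic quantity of order $n\omega_a\asymp a/(1-a/n)$, which is asymptotically much larger than the moderate-deviation threshold $\sqrt{\log n}/A\asymp\sqrt{\sigma_{n0}^2}$ (for both sparse and dense graphs, since $a\gg\log n$). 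Thus for any $u\ne 0$ the event $E_n$ fails, so $\pi(E_n^c)\to 1$ rather than $o(1)$, and the Le Cam penalty is not small. Conditioning the prior cannot suppress the contribution of overlap vertices $i\in S_1\cap S_2$ because their inflated factor $\exp(A^2(Mu)_i)$ is fixed once $i$ is in the signal set. The Donoho-Jin device you cite is a \emph{data-dependent} truncation: one truncates the likelihood ratio $L_S\mathbf{1}_{\Gamma_S}$ on an event $\Gamma_S$ about the observed degrees of the signal vertices, not about the planted configuration. Indeed this is exactly how the paper proceeds: the event $\Gamma_{S,i}$ requires the standardized statistic $\bigl(\beta_1^*(d_i(1)-\E d_i(1))+\beta_2^*(d_i(2)-\E d_i(2))\bigr)/\sqrt{\cdots}\le\sqrt{2\log n}$ for each $i\in S$, and the truncated second moment is controlled via the change-of-measure Lemma \ref{lemma:binomial_change_of_measure} together with the moderate-deviation estimates of Lemma \ref{lemma:binomial_master}.

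A second, related omission: you do not introduce the weights $\beta_1^*,\beta_2^*$ that tilt within-block and across-block degrees differently. The whole point of the theorem (and the source of the surprising gap with the vanilla higher-criticism test) is that the sharp constant $\csparse(\alpha)$ has the prefactor $\bigl(\tau_a(1-\tau_a)+\tau_b(1-\tau_b)\bigr)\big/\bigl(\tfrac{\tau_a}{1-\tau_a}+\tfrac{\tau_b}{1-\tau_b}\bigr)$, which a truncation on the raw degree $d_i(1)+d_i(2)$ cannot produce when $\tau_a\ne\tau_b>0$. These weights must enter the truncation event explicitly; they are where the heterogeneous weighting you allude to actually materializes. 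The rest of your setup — fixing $\C^*$, factorizing the second moment over edges, dropping the $A^2 u_iu_j$ cross term, linearizing for $\tfrac12<\alpha<\tfrac34$ — is sound and matches the spirit of the paper's argument (the paper uses a fixed-size $s/2$-per-block prior rather than i.i.d.\ Bernoulli, but that difference is cosmetic). It is the $\alpha\ge\tfrac34$ truncation step that needs to be rebuilt around the observed degrees of the signal vertices, weighted by $\beta_1^*,\beta_2^*$.
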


	We derive Theorem \ref{thm:sparse_lower} using an information theoretic lower bound for a simpler problem, where the true community assignments are known in advance. The proof is based on the truncated second moment argument with the main challenge being the choice of the truncation event. Unlike \cite{mms2016}, a non-signal-edge-deleted degree based truncation is not enough to yield the desired sharp thresholds. Instead one needs to take into account the knowledge of community assignments as well (at least when $\tau_a,\tau_b$ are positive). Finally we note that this simpler problem with known community assignments always furnishes a lower bound for problem \eqref{eqn:hypo}. 
	
	
	We now explore valid statistical procedures which work up to this threshold. To this end, the next results establish performance bounds on Higher Criticism based tests. For the statement of the following results we recall the definition of $\rho_k(\beta_1,\beta_2)$ from \eqref{eqn:rho_def}.

	\begin{theorem}\label{thm:sparsesignal_vanilla_upper_gen}
		Assume either $\log{n}\ll b<a\ll n$ and $\mathcal{S}^{\nu}_k$ with any $\nu\geq 1$ or $a,b=\Omega(n)$ and $\mathcal{S}^{\nu}_k$ with $\nu=1$. Let $\alpha>\frac{1}{2}$  and consider the signal strength
		\begin{align}
		A = \sqrt{\frac{C\log n}{\sigmazero^2}},\nonumber
		\end{align}
		where $\sigmazero^2$ is defined in \eqref{eqn:sigmazero_def}.
		\begin{enumerate}[label=\textbf{\roman*}.]
			\item \label{thm:sparsesignal_vanilla_hc_gen} If  $C>\chc(\beta_1,\beta_2,\alpha)$	where  
			\begin{align}
			\chc(\beta_1,\beta_2,\alpha) = \begin{cases}
			2 \rho_k(\beta_1,\beta_2,) \Big(\alpha - \frac{1}{2} \Big) \quad {\rm{for}}\,\, \frac{1}{2} < \alpha < \frac{3}{4}. \\
			2 \rho_k(\beta_1,\beta_2,) \Big( 1- \sqrt{1-\alpha}\Big)^2 \quad {\rm{for}}\,\, \alpha \geq \frac{3}{4},
			\end{cases} \nonumber 
			\end{align}
			then
			\be 
			\ & \max_{\mathcal{C} \in \mathcal{S}_k^{\nu}}\P^{(\C)}_{\mathbf{1}, a, b}\left(T_{HC}(\C,\beta_1,\beta_2)=1\right)\\ &+\sup_{\mathbf{\Theta} \in \Xi(s_n, A_n)} \max_{\mathcal{C} \in \mathcal{S}_k^{\nu}}\P^{(\C)}_{\mathbf{\Theta},a,b}\left(T_{HC}(\C,\beta_1,\beta_2)=0\right)=0. \\\label{eq:risk_hc_gen}
			\ee

			\item \label{thm:sparsesignal_vanilla_max_upper_gen} If $C>\cmax(\beta_1,\beta_2,\alpha)$ with  
			\begin{align}
			\cmax(\alpha) = 
			2 \rho_k(\beta_1,\beta_2) \Big( 1- \sqrt{1-\alpha}\Big)^2 \quad {\rm{for}}\,\, \alpha \geq \frac{3}{4}, \nonumber
			\end{align}
			then there exists $\delta>0$ such that
			\be 
			\ & \max_{\mathcal{C} \in \mathcal{S}_k^{\nu}}\P^{(\C)}_{\mathbf{1}, a, b}\left(T_{d_{\max}}(\C,\beta_1,\beta_2,\delta)=1\right)\\
			&+\sup_{\mathbf{\Theta} \in \Xi(s_n, A_n)} 	\max_{\mathcal{C} \in \mathcal{S}_k^{\nu}}\P^{(\C)}_{\mathbf{\Theta},a,b}\left(T_{d_{\max}}(\C,\beta_1,\beta_2,\delta)=0\right)=0. \\\label{eq:risk_max_gen}
			\ee

			
		\end{enumerate}
	\end{theorem} 
	
	\begin{remark}\label{remark:unbalanced_communities1} \textcolor{black}{We note that for graphs which are not completely dense (as quantified by $a,b\ll n$) the results above are valid for community assignments $\mathcal{C}\in \mathcal{S}^{\nu}_k$ which can be unbalanced up to a factor $\nu$ for any $\nu\geq 1$. For  completely dense graphs (as quantified by $a,b=\Omega(n)$ or equivalently $\tau_a,\tau_b>0$), the condition can be relaxed to  $\nu\downarrow 1$ fast enough and our results will only match the lower bound in Theorem \ref{thm:sparse_lower} in a rate optimal sense otherwise. Since for balanced communities (as quantified by $\nu=1$ above) our results are sharp with optimal constants, we only present the results for the balanced communities in this case.}
		
	\end{remark}

	Note that $T_{HC}(\C,\beta_1,\beta_2)$ and $T_{d_{\max}}(\C,\beta_1,\beta_2,\delta)$ are not statistically valid tests for all $\beta_1, \beta_2$, since they assume the true community assignment $\C$ known (as is the case for \eqref{eq:risk_hc_gen} and \eqref{eq:risk_max_gen}). To state the optimal procedure we therefore need to define notation for community recovery algorithms.  For any two community assignments $\mathcal{C}_1=\{\tilde{S}^{(1)}_1,\ldots,\tilde{S}_k^{(1)}\},\mathcal{C}_2=\{\tilde{S}_1^{(2)},\ldots,\tilde{S}_k^{(2)}\}$  we define the distance between $\C_1,\C_2$ to be 
	\be 
	\mathrm{dist}(\C_1,\C_2)=\inf_{\pi\in \Pi_k}\sum_{l=1}^k\I\left(\tilde{S}_l^{(1)}\neq \tilde{S}_{\pi(l)}^{(2)}\right).
	\ee
	where $\Pi_k$ contains all permutations of $\{1,\ldots,k\}$.
	For any measurable $\hat{\C}=\{\widehat{\tilde{S}}_1,\ldots,\widehat{\tilde{S}}_k\}$,we define the risks of community recovery over $\C\in \mathcal{S}_{k}^{\nu}$ with $\nu\geq 1$ to be
	\be 
	\mathrm{Risk}_n(\hat{\C},\Xi(s, A)):=\sup_{\mathbf{\Theta} \in \Xi(s, A)} \max_{\mathcal{C}\in \mathcal{S}_k^{\nu}}\Etheta^{(\C)}\left(\mathrm{dist}(\hat{\C},\C)\right).
	\ee
	We say $\hat{\C}$ is strongly consistent if $\mathrm{Risk}_n(\hat{\C},\Xi(s, A))\rightarrow 0$. With this definition we are ready to state the sharp upper bound which matches the lower bounds in Theorem \ref{thm:sparse_lower}.
	
	\begin{theorem}\label{thm:sparsesignal_optimal}
		Assume either $\log{n}\ll b<a\ll n$ and $\mathcal{S}^{\nu}_k$ with any $\nu\geq 1$ or $a,b=\Omega(n)$ and $\mathcal{S}^{\nu}_k$ with $\nu=1$. Consider the signal strength
		\begin{align}
		A = \sqrt{\frac{C\log n}{\sigmazero^2}},\nonumber
		\end{align} 
		where $\sigmazero^2$ is defined in \eqref{eqn:sigmazero_def}. Let $\hat{\mathcal{C}}\subset [n]$ be measurable such that $\mathrm{Risk}_n(\hat{\C},\Xi(s, A))\rightarrow 0$ 
		and let
		\begin{align}
		\beta_1^* = \frac{1}{1- \tau_a} \frac{1}{\sqrt{ \frac{1}{k} \frac{\tau_a}{1- \tau_a} + \frac{k-1}{k} \frac{\tau_b}{1- \tau_b}}},\,\,\,\,
		\beta_2^* =  \frac{1}{1- \tau_b} \frac{1}{\sqrt{ \frac{1}{k} \frac{\tau_a}{1- \tau_a} +  \frac{k-1}{k} \frac{\tau_b}{1- \tau_b}}}, \quad \text{when}\quad \tau_a,\tau_b>0.\nonumber
		\end{align}
		Define $\beta_1^*=\beta_2^*=1$ otherwise.
		\begin{enumerate}[label=\textbf{\roman*}.]
			\item \label{thm:sparsesignal_optimal_hc} The test based on $HC(\hat{\mathcal{C}},\beta_1^*,\beta_2^*)$ is powerful {if} $C>\chc^{\mathrm{opt}}(\alpha)$	where  
			\begin{align}
			\chc^{\mathrm{opt}}(\alpha) = \begin{cases}
			2 \rho_k(\beta_1^*,\beta_2^*) \Big(\alpha - \frac{1}{2} \Big) \quad {\rm{for}}\,\, \frac{1}{2} < \alpha < \frac{3}{4}. \\
			2 \rho_k(\beta_1^*,\beta_2^*) \Big( 1- \sqrt{1-\alpha}\Big)^2 \quad {\rm{for}}\,\, \alpha \geq \frac{3}{4}.
			\end{cases} \nonumber 
			\end{align}

			\item \label{thm:sparsesignal_optimal_max} The test based on $d_{\max}(\hat{\mathcal{C}},\beta_1^*,\beta_2^*)$ is powerful {if} $C>\cmax^{\mathrm{opt}}(\alpha)$ with  
			\begin{align}
			\cmax^{\mathrm{opt}}(\alpha) = 
			2 \rho_k(\beta_1^*,\beta_2^*) \Big( 1- \sqrt{1-\alpha}\Big)^2 \quad {\rm{for}}\,\, \alpha \geq \frac{3}{4}. \nonumber
			\end{align}
		\end{enumerate}
	\end{theorem}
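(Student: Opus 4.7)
The plan is a two-step reduction. \textbf{Step 1.} First, invoke Theorem \ref{thm:sparsesignal_vanilla_upper_gen} with the particular choice $\beta_1 = \beta_1^*$, $\beta_2 = \beta_2^*$. A direct application of Cauchy--Schwarz to the ratio defining $\rho(\beta_1, \beta_2)$ shows that $(\beta_1^*, \beta_2^*)$ minimizes $\rho$ over positive $(\beta_1, \beta_2)$, and the minimum value simplifies to
\[
\rho(\beta_1^*, \beta_2^*) = \frac{\tau_a(1-\tau_a) + \tau_b(1-\tau_b)}{\frac{\tau_a}{1-\tau_a} + \frac{\tau_b}{1-\tau_b}},
\]
which is exactly the information-theoretic constant appearing in Theorem \ref{thm:sparse_lower}. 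Consequently $\chc(\beta_1^*, \beta_2^*, \alpha) = \chc^{\mathrm{opt}}(\alpha)$ and $\cmax(\beta_1^*, \beta_2^*, \alpha) = \cmax^{\mathrm{opt}}(\alpha)$, and Theorem \ref{thm:sparsesignal_vanilla_upper_gen} yields the desired power guarantees---but only for the \emph{oracle} tests $T_{HC}(\mathcal{C}, \beta_1^*, \beta_2^*)$ and $T_{d_{\max}}(\mathcal{C}, \beta_1^*, \beta_2^*, \delta)$ that are allowed to use the true assignment $\mathcal{C}$.

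\textbf{Step 2.} Next, lift the oracle guarantee to the plug-in tests. Observe from the definition that $\mathcal{C}(i)$ is the community containing $i$, which is unchanged by the global relabeling $\mathcal{C} \leftrightarrow \mathcal{C}^c$; hence $d_i(1, \cdot)$, $d_i(2, \cdot)$, and all downstream statistics $D_i, HC, d_{\max}$ are invariant under that swap. Therefore, on the event $\{\mathrm{dist}(\hat{\mathcal{C}}, \mathcal{C}) = 0\} = \{\hat{\mathcal{C}} \in \{\mathcal{C}, \mathcal{C}^c\}\}$, the plug-in test statistics coincide exactly with the oracle ones. Since $\mathrm{dist}$ is integer-valued and non-negative, Markov's inequality gives
\[
\sup_{\mathcal{C}, \mathbf{\Theta}} \Ptheta^{(\mathcal{C})}\!\big(\mathrm{dist}(\hat{\mathcal{C}}, \mathcal{C}) \geq 1\big) \;\leq\; \mathrm{Risk}_n\big(\hat{\mathcal{C}}, \Xi(s, A)\big) \;\to\; 0,
\]
so the coincidence event has probability tending to one uniformly. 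Combining with the oracle bound of Step 1 completes both parts \ref{thm:sparsesignal_optimal_hc} and \ref{thm:sparsesignal_optimal_max}.

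The main obstacle is essentially absorbed into Step 1: the substance lies in the general-weight oracle statement of Theorem \ref{thm:sparsesignal_vanilla_upper_gen}, and once that is available the reduction in Step 2 is bookkeeping. A point worth flagging is that the integer-valued nature of $\mathrm{dist}$ turns the hypothesis $\mathrm{Risk}_n \to 0$ into \emph{exact} recovery (up to the trivial class swap) with high probability, which is what makes the Step 2 reduction so clean. If one only had vanishing-proportion (rather than exact) recovery, one would instead need a perturbation analysis controlling $D_i(\hat{\mathcal{C}}, \cdot) - D_i(\mathcal{C}, \cdot) = (\beta_1^* - \beta_2^*)\eta_i/\sigmazero(\beta_1^*, \beta_2^*)$ via a Bernstein-type bound on the edge counts $\sum_{j \in \hat{\mathcal{C}} \Delta \mathcal{C}} Y_{ij}$, and then verifying that the resulting number of flipped indicators $\I(D_i > t)$ is negligible compared to $\sqrt{\mathrm{Var}(HC(\cdot; t))}$ uniformly over the grid of $t$.
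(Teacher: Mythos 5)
Your proof is correct and takes essentially the same route as the paper: invoke Theorem~\ref{thm:sparsesignal_vanilla_upper_gen} at $(\beta_1^*,\beta_2^*)$, then lift the oracle guarantee to the plug-in test by observing that the integer-valued $\mathrm{dist}$ turns $\mathrm{Risk}_n(\hat{\C},\Xi(s,A))\to 0$ into exact recovery (up to the label swap) with probability tending to one uniformly, and that all the test statistics are invariant under $\C\leftrightarrow\C^c$. One small remark: the Cauchy--Schwarz identification of $(\beta_1^*,\beta_2^*)$ as the minimizer of $\rho$ and of $\rho(\beta_1^*,\beta_2^*)$ with the constant in Theorem~\ref{thm:sparse_lower} is correct, but it is not needed to prove the theorem as stated (the theorem's threshold is literally $2\rho(\beta_1^*,\beta_2^*)(\cdot)$); the paper instead makes that observation in the discussion after the theorem when arguing optimality.
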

	\begin{remark}\label{remark:unbalanced_communities3} \textcolor{black}{We note that Remark \ref{remark:unbalanced_communities1} regarding the assumptions on the community sizes is also applicable here.}
		
	\end{remark}

	Since $\rho_k(\beta_1^*,\beta_2^*)$ matches the optimal threshold from Theorem \ref{thm:sparse_lower} i.e. $\Big( \frac{\frac{1}{k} \tau_a (1-\tau_a) +  \frac{k-1}{k} \tau_b(1-\tau_b)}{\frac{1}{k} \frac{\tau_a}{1- \tau_a} + \frac{k-1}{k} \frac{\tau_b}{1-\tau_b}}  \Big)$ (which equals $1$ by our convention when $\tau_a=\tau_b=0$), Theorem \ref{thm:sparsesignal_optimal} implies that the following two-stage procedure is optimal $a,b \gg \log n$.  
	\begin{enumerate}
		\item[(i)] Run a community detection algorithm to construct $\hat{\C}$ (e.g.  Algorithm 1 of \cite{gao2016community}). 
		\item[(ii)] Reject if  $T_{HC}(\hat{C},\beta_1^*,\beta_2^*)>\sqrt{\log{n}}$. 
	\end{enumerate}
	
	The proof of the validity of the above two-stage procedure is easy. In particular,  in the regime of dense graphs $(\text{i.e. when}\ a,b\gg \log{n})$, strongly consistent community detection $(\mathrm{Risk}_n(\hat{\C},\Xi(s, A))  \rightarrow 0)$ is indeed possible whenever $\|\mathbf{\Theta}\|_{\infty}=o(n)$ and $a,b\gg \log{n}$ (see e.g. \cite{gao2016community} -- \textcolor{black}{where the condition $\nu\in [1,\sqrt{5/3}]$ made in that paper is not necessary here if only want strong consistency}). As a consequence, for any such $\mathbf{\Theta}$, Theorem \ref{thm:sparsesignal_optimal} justifies the optimality of the test based on $T_{HC}(\hat{C},\beta_1^*,\beta_2^*)$. Finally, for $\|\mathbf{\Theta}\|_{\infty}\gtrsim n$, the problem is trivial by using a test based on the maximum degree (reject when the maximum degree is bigger than $cn$ for some universal constant $c>0$). Combining these two cases by union bound yields the desired sharp optimality of the two-stage procedure.  

	This two-stage procedure is enough to complete the story of sharp detection thresholds. However, the fact that we actually have to run a strongly consistent community detection algorithm to achieve the sharp results, raises the following question -- ``is it possible to simply use degree based tests without knowledge of the community assignments which achieve the same sharp results?" We now provide some evidence to suggest that for completely dense graphs this seems not to be the case. To this end we focus on the balanced communities case i.e. $\mathcal{S}_{k}^{\nu}$ for $\nu=1$. Indeed, for balanced communities the calibration of the tests $HC(\C,1,1)$ and $d_{\max}(\C,1,1)$ considered in Theorem \ref{thm:sparsesignal_vanilla_upper_gen} is free of $\C$. 
	Our next theorem collects the performance of these simple degree based tests for balanced communities.

	
	\begin{theorem}\label{thm:sparsesignal_vanilla_upper}
		Assume $\log{n}\ll b<a$ and $\mathcal{S}^{\nu}_k$ with $\nu=1$. 	
		Let $\alpha>\frac{1}{2}$  and consider the signal strength
		\be
		A = \sqrt{\frac{C\log n}{\sigmazero^2}},\label{eq:alt1}
		\ee
		where $\sigmazero^2$ is defined in \eqref{eqn:sigmazero_def}.
		\begin{enumerate}[label=\textbf{\roman*}.]
			\item \label{thm:sparsesignal_vanilla_hc} The test based on $HC(1,1)$ is powerful {if} $C>\chc(\alpha)$	where  
			\begin{align}
			\chc(\alpha) = \begin{cases}
			2 \rho_k(1,1) \Big(\alpha - \frac{1}{2} \Big) \quad {\rm{for}}\,\, \frac{1}{2} < \alpha < \frac{3}{4}. \\
			2 \rho_k(1,1) \Big( 1- \sqrt{1-\alpha}\Big)^2 \quad {\rm{for}}\,\, \alpha \geq \frac{3}{4}.
			\end{cases} \nonumber 
			\end{align}

			\item \label{thm:sparsesignal_vanilla_max_upper} The test based on $d_{\max}(1,1)$ is powerful if $C>\cmax(\alpha)$ with  
			\begin{align}
			\cmax(\alpha) = 
			2 \rho_k(1,1) \Big( 1- \sqrt{1-\alpha}\Big)^2 \quad {\rm{for}}\,\, \alpha \geq \frac{3}{4}. \nonumber
			\end{align}
			
			\item \label{thm:sparsesignal_vanilla_max_lower} \textcolor{black}{Assume $a,b\gg(\log{n})^3$}. The test based on $d_{\max}(1,1)$ is powerless if $C<\cmax(\alpha)$.
		\end{enumerate}
	\end{theorem} 
	
	%

	To understand why this result is surprising, it is instructive to compare these results to analogous ones derived in the context of the sparse signal detection problem for sequence models. In particular, motivated by the long series of results on sparse signal detection problems \citep{Ingster4,Jin1,Candes,mukherjee2015hypothesis,arias2015sparse} and recent work on heterogeneity detection over sparse Erd\H{o}s-R\'{e}nyi random graphs under the $\beta$-model \citep{mms2016}, we expect that the Maximum Degree Test and the Higher Criticism should both perform optimally with sharp constants for very sparse signals $(\alpha\geq \frac{3}{4})$. Moreover, the Higher Criticism Test should be provably better than the Maximum Degree Test for denser signals with $\alpha \in (1/2,3/4)$. The observation that $\rho(1,1)=1$ for $\tau_a = \tau_b =0$, in conjunction with Theorem \ref{thm:sparse_lower} establishes the expected intuitive picture for all $a,b$ sequences with $\tau_a = \tau_b =0$. However, next consider the regime where $\tau_a > \tau_b >0$ and note that the tests based on $HC(1,1)$ and $d_{\max}(1,1)$ are based on the degrees $(d_1,\ldots,d_n)$ only. However, we note that  $\tau_a>\tau_b>0$ implies
	\textcolor{black}{$\rho_k(1,1)>\Big( \frac{\frac{1}{k} \tau_a (1-\tau_a) +  \frac{k-1}{k} \tau_b(1-\tau_b)}{\frac{1}{k} \frac{\tau_a}{1- \tau_a} + \frac{k-1}{k} \frac{\tau_b}{1-\tau_b}}  \Big)$} and thus there is a gap between the thresholds derived in Theorem \ref{thm:sparsesignal_vanilla_upper} and Theorem \ref{thm:sparse_lower}. Although we do not have a similar performance lower bound for the vanilla Higher Criticism Test, we strongly believe that at least in the extreme signal sparsity regime $(\alpha\geq \frac{3}{4})$, the Maximum Degree Test and the Higher Criticism Test are essentially similar. 
	This directly implies the rather surprising result that on dense graphs ($\tau_a \geq \tau_b >0$), for very sparse alternatives, the maximum degree test is not, in fact, optimal in terms of detection thresholds. This is in sharp contrast to the usual results expected for Gaussian sequence models, or for random graph models with ``exchangeable" degrees. We illustrate the differences between the two thresholds in Fig \ref{fig:main}.
	
	\begin{figure}
		\begin{center}
			\includegraphics[width=6in, height=3in]{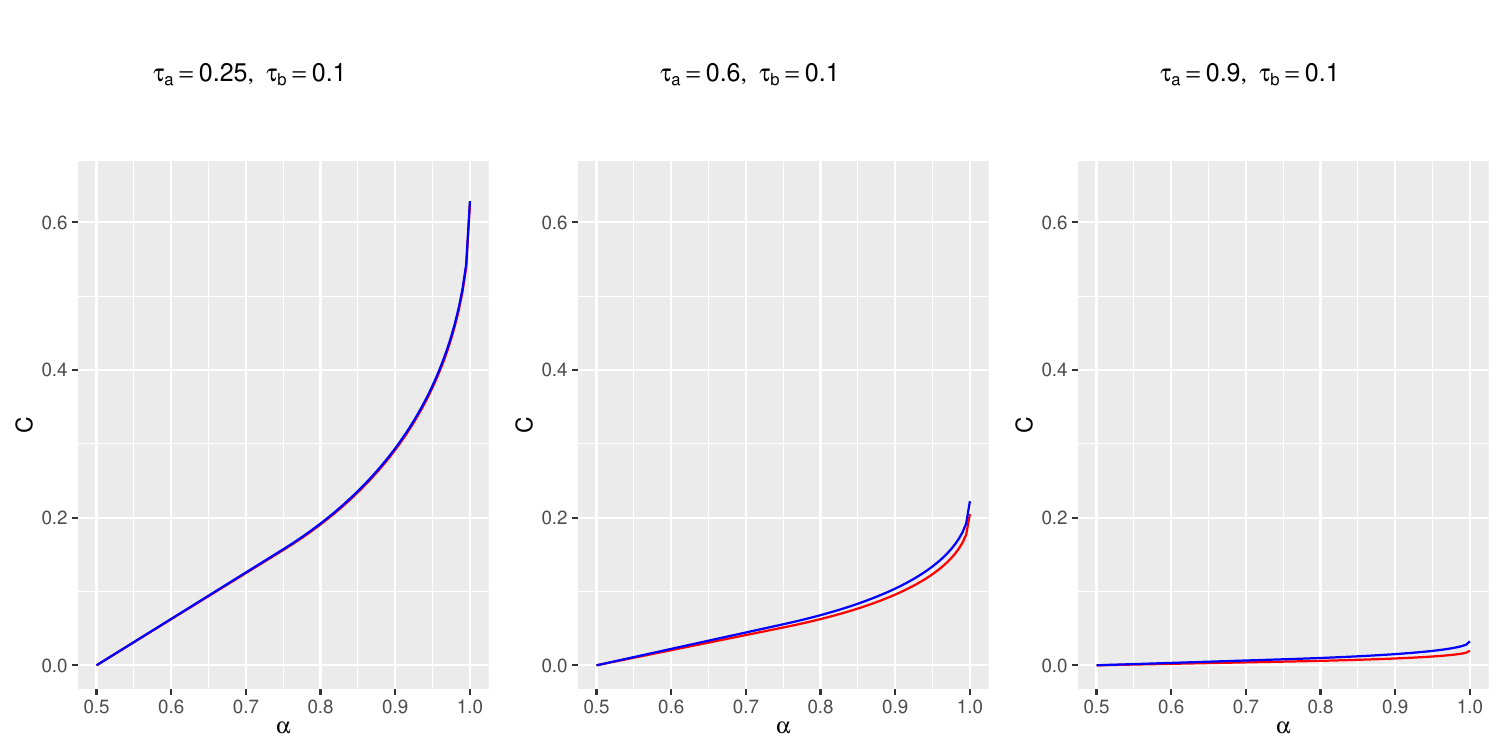}
		\end{center}
		\caption{The naive threshold $\rho_2(1,1)$ (in BLUE) and the correct information theoretic threshold $\rho_2(\beta_1^*,\beta_2^*)$ (in RED) are shown for different values of $\tau_a, \tau_b$. Note the vanishing difference between these thresholds as $\tau_a$ and $\tau_b$ becomes smaller.}
		\label{fig:main} 
	\end{figure}

	We now briefly comment on the analysis in the proof of Theorem \ref{thm:sparsesignal_vanilla_upper}\ref{thm:sparsesignal_vanilla_max_lower}.
	As mentioned earlier, the lower bound statement on the Maximum Degree Test in Theorem \ref{thm:sparsesignal_vanilla_upper}\ref{thm:sparsesignal_vanilla_max_lower} is indeed necessary to demonstrate the competition between the HC and max-degree based procedures. Analysis of the lower bounds for the vanilla Maximum Degree Test requires good control over the null distribution of the test statistic. Although the null distribution of the maximum degree of an Erd\H{o}s-R\'{e}nyi graph is standard in literature \citep[Theorem 3.3$^{'}$]{bollobas}, we could not find the corresponding results for Stochastic Block Models. To this end, our next result derives the asymptotic sampling distribution of the maximum degree under the null hypothesis, after appropriate centering and scaling. Dropping notational dependence on the true underlying community assignment $\C$ we let $\muzero = \E_{\mathbf{1}, a,b}[d_1]$, $\sigmazero = {\textrm{Var}}_{\mathbf{1},a,b}[d_1]$ (note these do not depend on particular community $\C$ for balanced communities $\C\in \mathcal{S}_{k}^{\nu}$ with $\nu=1$). Then we have the the following result.
	\begin{theorem}
		\label{thm:max_deg_null}
		Let $b \gg (\log n)^3$. In this case, we have, as $n \to \infty$, 
		\begin{align}
		\ & \P_{\mathbf{1},a,b}\Big[ \frac{\max_i d_i - \muzero }{\sigmazero} \leq \sqrt{2 \log n} \Big( 1 - \frac{\log \log n + \log (4 \pi)}{4 \log n} + \frac{y}{2 \log n}\Big) \Big] \\ &\to \exp\Big[- {\textrm{e}}^{-y} \Big]. \nonumber 
		\end{align}
	\end{theorem}

	\begin{remark}
		We note that after appropriate centering and scaling, the null distribution of the maximum converges to a Gumbel distribution. It is specifically interesting to compare this result to the asymptotic distribution of the maximum degree in an Erd\H{o}s-R\'{e}nyi random graph. 
		A direct proof in that case proceeds using the method of moments \citep[Theorem 3.3$^{'}$]{bollobas}. In the case of the SBM, the individual degrees are no longer binomial, but rather a sum of two independent Binomial random variables. As a result, many direct computations involving the degrees become considerably more involved. In our proof, we circumvent this difficulty, and establish this result using a softer argument, based on a version of Stein's method for Poisson approximation \citep{barbour1992poisson}. Finally, the result and proof of Theorem \ref{thm:max_deg_null} is actually valid for $d_{\max}(\C,1,1)$ as well for any $\C\in S_{k}^{\nu}$ with $\nu\geq 1$.
	\end{remark}


	\subsection{$a,b\lesssim \log{n}$}
	The behavior of the thresholds for $a,b\lesssim \log{n}$ is significantly different from the earlier subsection. 
	%
	In this section, we take a closer look at this regime, and derive the optimal testing thresholds in this case. Recall that we parametrize the signal sparsity $s= n^{1-\alpha}$, and focus on $\alpha \in (0, 1/2)$. Our results in this case are analogous to the sparse regime in \cite{arias2015sparse}. To an expert, this is hardly surprising, as the degrees of our graph behave like i.i.d. Poisson samples under the null in this regime. We assume throughout this section that \textcolor{black}{$0<\liminf b \leq \liminf a \lesssim \log n$. For technical convenience, we assume that $0<\liminf b \leq \liminf a$ i.e. both $a,b$ are bounded away from $0$}  To derive a sharp detection threshold in this regime, we parametrize 
	\begin{align}
	1+A = \left(\frac{\log n}{\frac{a}{k} + \frac{(k-1)b}{k}}  \right)^{\gamma} \nonumber 
	\end{align}
	for some $\gamma >0$. 
	
	\begin{thm}\label{thm:lower_below_logn}\textcolor{black}{
			Assume $\alpha>1/2$, $b\leq a \lesssim \log n$ and $\liminf b>0$ as $n \to \infty$. Then the following hold for any $\mathcal{S}_{k}^{\nu}$ with $\nu\geq 1$.}
		
		\begin{enumerate}[label=\textbf{\roman*}.]
			\item \label{thm:lower_below_logn_lower} All tests are asymptotically powerless if $\gamma < \alpha$. 
			
			\item  \label{thm:lower_below_logn_upper} A test which rejects for large values of the maximum degree is asymptotically powerful if $\gamma > \alpha$.
		\end{enumerate}
	\end{thm}
	Theorem \ref{thm:lower_below_logn} demonstrates a different behavior of the detection problem compared to $a,b\gg \log{n}$ where a vanishing $A$ is detectable even in the sparse signal regime. Here instead, one needs $A\rightarrow \infty$ in the prescribed rate above for successful detection. The optimal test is once again based on a version of the Maximum Degree Test -- albeit with a different rejection region. A careful look at the proof of Theorem \ref{thm:lower_below_logn} establishes that the Max-degree test does not require prior estimation of $a,b$ and $\C$. 
	
	\section{Generalizing the Assumptions}\label{section:estimation_ab} In this section we discuss how to go beyond the assumptions  made throughout the paper.
	
	\begin{description}[align=left]\itemsep15pt
		
		\item [\textbf{Unknown $a,b$} :] 
		We first focus on assumption (III) made in Section \ref{section:notation_assumptions} about the known nature of $a,b$ and begin by making a couple of observations. First note that while understanding unknown $a,b$ we can safely assume that $a,b\gg \log{n}$ since the results in the other regime, as covered by Theorem \ref{thm:lower_below_logn}, do not depend on the knowledge of $a,b$.  Next, note that Total Degree Test which is optimal in the dense regime (see Theorem \ref{thm:dense}) only depends on $a,b$ in deciding its cut-off. In particular, the proof of Theorem \ref{thm:dense} shows that the optimal test rejects when the total degree ($\sum_{i}d_i$) is larger than $C(a+b+ Abs)$ for a constant depending on $k$. Consequently it is easy to see that it is enough to find estimators $\hat{a},\hat{b}$ of $a,b$ such that $\hat{a}/a,\hat{b}/b$ is bounded away from $0$ and $\infty$ with high probability. \textcolor{black}{Next, for simplicity, consider the results regarding the maximum degree test by replacing the unknown $a,b$ in the construction of the test by estimators $\hat{a},\hat{b}$. Provided $\max\{|\frac{\hat{a}}{a}-1|,|\frac{\hat{b}}{b}-1|\}=o_p\left(\sqrt{\log{n}/(a+b)}\right)$, a simple modification of the proofs show that the exact same results from Theorem \ref{thm:sparsesignal_optimal}\ref{thm:sparsesignal_optimal_max} and Theorem \ref{thm:sparsesignal_vanilla_upper}\ref{thm:sparsesignal_vanilla_max_upper} remain valid. This automatically provides the correct rate of detection thresholds for sparse signals and unknown $a,b$ -- and also the right constants for $\alpha>3/4$. We believe that the same results continue to hold even for the HC test by plugging in $\hat{a},\hat{b}$ with the desired properties as above. However, the computations are much more complicated.} 
		Finally, the $a,b\lesssim \log{n}$ case is only relevant for the dense regime analysis -- where as noted above it is enough to find estimators $\hat{a},\hat{b}$ of $a,b$ such that $\hat{a}/a,\hat{b}/b$ is bounded away from $0$ and $\infty$ with high probability. This can be easily achieved by simply estimating them both by the average degree in the graph. These constructions of $\hat{a},\hat{b}$ show the validity of the results in the paper even for unknown $a,b$ -- at least for rate optimal hypothesis testing. 

		In conclusion, in the case of unknown $a,b$ and given $\alpha > 1/2$, we construct a test as follows. First, let $\mathcal{T}_n$ denote $t_n$ vertices chosen at random from the graph.  Then, with probability converging to $1$, all the vertices in $\mathcal{T}_n$ correspond to null vertices as long as $t_n\ll n/s$. 
		Now, let  $\hat{E}(C)$ denote the event that $\sum_{i,j\in \mathcal{T}_n} Y_{i,j}/t_n^2>C\log{n}/n$. Then, on the event $ \hat{E}(C)$, perform the optimal community detection algorithm from \cite{gao2016community} on the \textit{entire} graph to obtain $\hat{\C}$. Given the community assigments, use the  $\mathcal{T}_n$ vertices to estimate $a,b$ by simply using within and across estimated community degrees -- denote these by $\hat{a},\hat{b}$. It is easy to check that on the event $\hat{\C}=\C$ (where $\C$ is the true underlying community assigment), one has that
		$\max\{|\frac{\hat{a}}{a}-1|,|\frac{\hat{b}}{b}-1|\}=O_p(1/\sqrt{t_n^2 (a+b)/n})$ which satisfies the desired rate of convergence whenever $t_n\gg \sqrt{n/\log{n}}$. This can be allowed since one can choose $\sqrt{n}\ll t_n\ll n/s $ when $s=n^{1-\alpha}$ and $\alpha>\frac{1}{2}$. Thus on the event $ \hat{E}(C)$, we use these estimates $\hat{\C},\hat{a},\hat{b}$ to construct a plug-in version of the Maximum Degree Test described on Theorem \ref{thm:sparsesignal_vanilla_upper_gen}\ref{thm:sparsesignal_vanilla_max_upper_gen}. On the event $\hat{E}(C)^c$, we instead perform the optimal test from Theorem \ref{thm:lower_below_logn}\ref{thm:lower_below_logn_upper} (which does not require estimation of communities or $a,b$). Observe that in the regime $a,b \gg \log n$, we can safely assume that $\| \mathbf{\Theta} \|_{\infty} = o(n)$, as otherwise, with probability converging to 1, the graph has at least one vertex with degree $n$ under the alternative--- and thus it is straightforward to detect deviations from the null). Next, note that strongly consistent community detection is possible whenever $a,b\gg \log{n}$ and $\|\mathbf{\Theta}\|_{\infty}=o(n)$ \citep{gao2016community}. Therefore if either $1\leq a,b\ll \log{n}$ or $a,b\gg \log{n}$, for any fixed $C>0$, the test described above is optimal for $s=n^{1-\alpha}$, $\alpha>\frac{1}{2}$  by choosing $\sqrt{n}\ll t_n\ll n/s $. Here we have used Bernstein's Inequality to check that, whenever $t_n\gg \max\{\sqrt{n/(a+b)},\sqrt{n/\log{n}}\}$, with probability converging to $1$, one has occurence of $\hat{E}(C)$ for $a,b\gg \log{n}$ and the occurence of $\hat{E}(C)^c$ for $1\leq a,b\ll \log{n}$ -- and this choice of $t_n$ can be allowed for $s=n^{1-\alpha}$ and $\alpha>\frac{1}{2}$  by choosing $\sqrt{n}\ll t_n\ll n/s $ as before . Moreover this test achieves the sharp constant for $\alpha>3/4$. Finally, for $\alpha\leq \frac{1}{2}$ we can simply use $\sum_{i,j\in \mathcal{T}_n} Y_{i,j}/t_n^2$ to get the correct order of $a,b$ and this allows constructing the optimal test described in Theorem \ref{thm:dense}\ref{thm:dense_upper}.

		
		
		\item [\textbf{Community Sizes} :]  Next consider assumption (I). We recall that the use of this assumption with $\nu=1$ is only relevant for $a,b$ exactly order $n$ (see Remarks \ref{remark:unbalanced_communities1} and \ref{remark:unbalanced_communities3}) and even in that case this assumption is irrelevant (and only comparable size of communities is relevant) when one is only interested in optimal rates.

		\item [\textbf{Finite $k$ and Comparable $a,b$} :]  In terms of assumption (II), our proofs show that the results for $k\rightarrow \infty$ are somewhat easier to obtain. This can be understood by following our proof which essentially reduces to sharp understanding (in both moderate deviation and local CLT sense) of linear combination of two Binomial random variables -- both the number of trials and the success probabilities of which are comparable. When $k\rightarrow \infty$ one of the Binomials dominate and the proof becomes much simpler. A similar comment holds for the case when $b/a\rightarrow 0$ as opposed to the assumption made in (III) of Section \ref{section:notation_assumptions}.
		
		\item [\textbf{Two Sided $A$} :]
		The assumption of one sided signal allows for some simplifications of the already long proofs, without sacrificing the conceptual challenges inherent in this problem. Indeed, as we discuss in Section 4,  our results can be easily extended to the case where $\theta_i=1\pm A$. Specifically, for dense signal regimes, one can consider a chi-square type test statistic using the degrees. In the sparse regime, tests based on $|D_i(\hat{\mathcal{C}},\beta_1^*,\beta_2^*)|$, instead of $D_i(\hat{\mathcal{C}},\beta_1^*,\beta_2^*)$, are expected to be optimal.  In the one-sided signal case, the current upper bound proof crucially uses a special monotonicity property, which is absent in the two-sided problem. 
		This can be dealt with by dividing the proof into cases, where $A$ is just above the detection thresholds vs. when it is far from it (which is an easier case)--- see e.g., Mukherjee, Pillai, Lin (2015) for a related analysis.  The lower bounds (at least for $\alpha>\frac{1}{2}$) follow verbatim. 

		
		\item [\textbf{General $P_n(\ell,\ell')$} :] Finally we comment on the assumption that all the within community connection probabilities in the null SBM is assumed to be same as $a/n$ and across communities as $b/n$. First we note that the results in the dense regime (see Theorem \ref{thm:dense}) remain unchanged even in the general case. While considering $k>2$ communities this helps us in eventually reducing our analysis in to sharp understanding of log scale, exponential scale, as well as local central limit theorem type of asymptotics of arbitrary positive linear combination of \textit{two} Binomial random variables in the moderate deviation regime (see proofs of the results in Section \ref{sec:binom}). Our results on linear combination of two Binomial random variables can be extended to $k$ linear combinations -- albeit with much more tedious computations using similar techniques to the ones presented here. Given similar log scale, exponential scale, as well as local central limit theorem type of asymptotics  for positive linear combination of \textit{two} Binomial random variables-- the rest of our proofs follow almost verbatim (the proof for the HC tests take extra care but essentially are byproducts of very careful use of the results in Section \ref{sec:binom}). 
		%
		Finally, if one is only interested in rate optimal results and not sharp constants, then the case of this general $P_n$ matrix is much easier to analyze. In particular, the lower bounds no longer require a subtle truncation argument and consequently the sharp moderate deviation exponents are no longer required for the lower bound proofs. As for the upper bound, the proof goes by simply using the Maximum Degree Tests with Chernoff \textit{Upper} Bounds on the linear combination of Binomial tails. 

	\end{description}

	\section{Properties of linear combination of Binomial random variables} 
	\label{sec:binom}
	Our analyses depend very heavily on a detailed understanding of deviation properties of linear combinations of binomial random variables. These arise very naturally in our context--- for example, each vertex degree under the null is a sum of two independent Binomial random variables and the optimal tests in Theorem \ref{thm:sparsesignal_optimal} depend on linear combination of two independent Binomial random variables. We establish some relevant results in this section, which are invaluable in the proofs of the main results stated Section \ref{section:main_results}. The proofs of these results are outlined in \cite{ms2019}.
	
	\subsection{Moderate Deviation properties}
	
	Moderate deviation  and local CLT type properties of linear combinations of independent Binomial random variables form a cornerstone of our analysis.  We note that while these results are conceptually straight-forward, the proofs are often involved due to the discrete structure of the random variables involved. We defer the proofs to the Supplement \citep{ms2019}.
	
	To this end, let $X\sim \Bin\left(\gamma_1\frac{n}{k},\frac{a'}{n}\right)\perp Y\sim \Bin\left(\gamma_2\frac{n}{k},\frac{b'}{n}\right)$ with $a'\geq b'\gg \log{n}$, $ 0<c< \liminf \frac{b'}{a'} \leq \limsup \frac{b'}{a'} \leq 1$ for a constant $c$, and constants $\gamma_1,\gamma_2>0$ such that both $\gamma_1n/k$ and $\gamma_2n/k$ are integers. Let

	\begin{align}
	\muone:=\E(X) = \gamma_1\frac{n}{k} \cdot \frac{a'}{n} ,\,\,\,\, \mutwo:=\E(Y) = \gamma_2\frac{n}{k} \cdot \frac{b'}{n}. \nonumber 
	\end{align}
	\begin{align}
	\sigmaone^2:=\mathrm{Var} (X) = \gamma_1\frac{n}{k} \cdot \frac{a'}{n} \cdot \Big(1- \frac{a'}{n} \Big),\,\,\,\, \sigmatwo^2:=\mathrm{Var} (Y) = \gamma_2\frac{n}{k} \cdot \frac{b'}{n} \cdot \Big(1- \frac{b'}{n} \Big).\nonumber 
	\end{align}
	Hereafter for any fixed positive constants $\beta_1$ and $\beta_2$ define
	\be 
	\sigmabeta:=\sqrt{\beta_1^2 \sigmaone^2 + \beta_2^2 \sigmatwo^2},\\
	\mubeta:=\beta_1\muone+\beta_2\mutwo,\\
	d(\beta_1,\beta_2):={\beta_1X+\beta_2Y-\mubeta}.
	\ee
	Also for $s_1,s_2 \leq {n^{1-\alpha}}$ with $1/2<\alpha<1$, and $X'\sim \Bin\left(s_1,\frac{a''}{n}\right) \perp Y'\sim\Bin\left(s_2,\frac{b''}{n}\right)$ with $a''/a'\rightarrow 1$, $b''/b'\rightarrow 1$, let
	\be 
	d'(\beta_1,\beta_2):={\beta_1(X+X')+\beta_2(Y+Y')-\mubeta}.
	\ee

	\subsubsection{Log Scale Asymptotics}
	In this section we study moderate deviations of linear combinations of binomial random variables on the logarithmic scale. Along the way, we shall also study bounds on the probability of such linear combinations belonging to specific subintervals corresponding to moderate deviation regimes. 
	\begin{lemma}\label{lemma:binomial_master} 
		Let $h=h_n$ be such that $c<\liminf\frac{h}{\sigmabeta\sqrt{\log{n}}}\leq\\ \limsup\frac{h}{\sigmabeta\sqrt{\log{n}}}<c'$ for constants $0<c<c'<\infty$ and $C_n\rightarrow C>0$ be a positive sequence.
		\begin{enumerate}
			\item \label{lemma:binomial_equal}Fix any sequence $\{\xi_n\}$ such that $|\xi_n| \ll \log{n}$. Then the following hold for any $\varepsilon>0$ and $n$ sufficiently large (depending on $c,c',\varepsilon,\beta_1,\beta_2$)
			
			\begin{enumerate}
				\item\label{lemma:binomial_equal_pure} $${\sup\limits_{|t|\leq \xi_n}\P\left(d(\beta_1,\beta_2)=h+t\right)}\leq \frac{1}{\sigmabeta}\exp\left(-\frac{h^2}{2\sigmabeta^2}(1-\varepsilon)\right).
				$$
				
				\item\label{lemma:binomial_equal_contam}  $${\sup\limits_{|t|\leq \xi_n}\P\left(d'(\beta_1,\beta_2)=h+t\right)}\leq\frac{1}{\sigmabeta}\exp\left(-\frac{h^2}{2\sigmabeta^2}(1-\varepsilon)\right).
				$$
			\end{enumerate}
			\item\label{lemma:binomial_tail} The following moderate deviation asymptotics hold.
			\begin{enumerate}
				\item\label{lemma:binomial_tail_pure} $$\lim_{n \to \infty} \frac{\log\P\left(d(\beta_1,\beta_2)>C_n\sigmabeta\sqrt{\log{n}}\right)}{\log n}=-\frac{C^2}{2}.$$
				
				\item\label{lemma:binomial_tail_contam} $$\lim_{n \to \infty}\frac{\log \P\left(d'(\beta_1,\beta_2)>C_n\sigmabeta\sqrt{\log{n}}\right)}{\log n}=-\frac{C^2}{2}.$$
			\end{enumerate}
			
		\end{enumerate}
	\end{lemma}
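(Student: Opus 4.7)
I would prove all four sub-statements by Cram\'er's exponential tilting, adapted to the discrete sum $d(\beta_1,\beta_2)$. Decomposing $X=\sum_{i=1}^{n/2}\xi_i$ with $\xi_i\stackrel{\mathrm{iid}}{\sim}\Bin(1,a'/n)$ and $Y=\sum_{j=1}^{n/2}\eta_j$ with $\eta_j\stackrel{\mathrm{iid}}{\sim}\Bin(1,b'/n)$ expresses $d(\beta_1,\beta_2)$ as a centered sum of $n$ independent bounded random variables with variance $\sigmabeta^2$. Applying $\log(1+p(e^u-1))=pu+\tfrac12 p(1-p)u^2+O(pu^3)$ to each Bernoulli MGF and summing, the centered log-MGF $\Lambda_d(\lambda):=\log \E[e^{\lambda d(\beta_1,\beta_2)}]$ satisfies
\[
\Lambda_d(\lambda)=\tfrac12\sigmabeta^2\lambda^2+O(\sigmabeta^2\lambda^3)\qquad\text{as }|\lambda|\to 0,
\]
which is the workhorse for both parts of the lemma.

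\textbf{Plan for Part 2 (tail asymptotics).} For the upper bound in Part 2(a) I would pick $\lambda_\star=C_n\sqrt{\log n}/\sigmabeta$ and apply Chernoff. Since $\sigmabeta^2\asymp a'\gg \log n$, we have $\lambda_\star\to 0$ and the cubic error $\sigmabeta^2\lambda_\star^3=O((\log n)^{3/2}/\sigmabeta)=o(\log n)$, yielding $\log\P(d(\beta_1,\beta_2)>C_n\sigmabeta\sqrt{\log n})\le -C_n^2(\log n)/2+o(\log n)$. For the matching lower bound I change to the Cram\'er-tilted measure $\tilde\P_{\lambda_\star}$: under it, $d(\beta_1,\beta_2)$ has mean $\Lambda_d'(\lambda_\star)\sim C_n\sigmabeta\sqrt{\log n}$ and variance asymptotic to $\sigmabeta^2$. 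Restricting the change-of-measure integral to $[C_n\sigmabeta\sqrt{\log n},\,C_n\sigmabeta\sqrt{\log n}+\sigmabeta]$ produces a tilted probability bounded below by a positive constant via a standard CLT in $\tilde\P_{\lambda_\star}$, while the exponential prefactor is $\exp(-C_n^2\log n/2)(1+o(1))$. Dividing by $\log n$ then yields the limit $-C^2/2$.

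\textbf{Plan for Part 1 (local bounds).} For Part 1(a) I would tilt at $\lambda_\star=(h+t)/\sigmabeta^2$, so that $h+t$ becomes a typical fluctuation of $d(\beta_1,\beta_2)$ under $\tilde\P_{\lambda_\star}$. The change-of-measure identity
\[
\P(d(\beta_1,\beta_2)=h+t)=\exp\bigl(\Lambda_d(\lambda_\star)-\lambda_\star(h+t)\bigr)\,\tilde\P_{\lambda_\star}(d(\beta_1,\beta_2)=h+t)
\]
splits the bound into an exponential rate factor and a tilted local probability. The MGF expansion gives the exponential factor $\exp(-(h+t)^2/(2\sigmabeta^2)(1+o(1)))$; since $h\asymp\sigmabeta\sqrt{\log n}\gg \log n\ge \xi_n\ge |t|$ (using $\sigmabeta\gg\sqrt{\log n}$), one has $(h+t)^2\ge h^2(1-\varepsilon)$ uniformly in $|t|\le\xi_n$ for $n$ large. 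The tilted local probability I bound by fixing $X=k$, which uniquely determines $Y=(m-\beta_1 k)/\beta_2$, to obtain $\tilde\P_{\lambda_\star}(\beta_1 X+\beta_2 Y=m)\le \max_k\tilde\P_{\lambda_\star}(X=k)\le C/\tilde\sigma_{n1}$; the analogous bound via $Y$, combined with $\sigmabeta^2\le 2\max(\beta_1,\beta_2)^2\max(\sigma_{n1}^2,\sigma_{n2}^2)$ and the fact that tilted binomial variances are asymptotic to the untilted ones, upgrades this to $\tilde\P_{\lambda_\star}(\cdot)\le C_\beta/\sigmabeta$. The pointwise binomial bound $\max_k\P(X=k)\le C/\sigma_{n1}$ for $X\sim\Bin(n/2,a'/n)$ is the classical Stirling estimate, valid since $\sigma_{n1}^2\asymp a'\to\infty$. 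The multiplicative constant $C_\beta$ is absorbed into the $(1-\varepsilon)$ in the exponent because $h^2/\sigmabeta^2\asymp \log n\to\infty$. The contaminated variants 1(b) and 2(b) follow the same template: the MGF of $\beta_1 X'+\beta_2 Y'$ contributes an extra factor whose log at $\lambda_\star$ is $O(\lambda_\star(s_1 a''/n+s_2 b''/n))=O(\sqrt{a\log n}/n^{\alpha})=o(\log n)$ for $\alpha>1/2$ and $a\le n$, perturbing neither the exponential rate nor the tilted variance to leading order.

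\textbf{Main obstacle.} The genuinely delicate step is the uniform pointwise bound on $\tilde\P_{\lambda_\star}(\beta_1 X+\beta_2 Y=m)$: if $\beta_1/\beta_2$ is irrational, the sum $\beta_1 X+\beta_2 Y$ is not supported on a lattice and no clean local CLT for the sum itself is directly available. My workaround exploits that fixing $X=k$ forces $Y$ to a unique value, reducing the bound to the classical pointwise binomial estimate applied to whichever of $X,Y$ has the larger variance, combined with the elementary comparison $\sigmabeta\le C_\beta\max(\sigma_{n1},\sigma_{n2})$.
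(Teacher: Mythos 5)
Your proof takes a genuinely different and more systematic route than the paper's. The paper works directly with the joint mass function of $(X,Y)$: it splits the index $h_1$ into a saddle-point window $I(C^*,h^*)$ and its complement, applies Bollob\'as' sharp binomial pointwise estimates (Theorems 1.2 and 1.5) inside the window, uses off-the-shelf binomial tail bounds outside, and for the tail asymptotics it discretizes the tail region, sums the local estimates, and handles contamination $X',Y'$ by a separate Bernstein step. Your Cram\'er-tilting approach packages the pointwise bound, the tail bound, and the contamination perturbation into one change-of-measure template, which is conceptually cleaner and avoids the case analysis over rational versus irrational $\beta_1/\beta_2$ by reducing to a single-coordinate pointwise binomial estimate. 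What the paper's approach buys is an explicit combinatorial picture (counting contributing pairs $(h_1,h_2)$) that also drives their sharper exponential-scale lemma (Lemma \ref{lemma:binomial_tail_exp_scale}); your tilting argument would not immediately give the $(1+o(1))$-precise tail ratios needed there.

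Two points in your write-up need repair, though both are fixable. First, in the lower bound for Part 2(a), you restrict the change-of-measure integral to $[C_n\sigmabeta\sqrt{\log n},\,C_n\sigmabeta\sqrt{\log n}+\sigmabeta]$ and claim the tilted probability of this window is bounded below by a constant. That is false in the allowed regime $\log n\ll\sigmabeta^2$: the tilted mean $\Lambda_d'(\lambda_\star)$ differs from $C_n\sigmabeta\sqrt{\log n}$ by $O(\sigmabeta^2\lambda_\star^2)=O(\log n)$, and when $\sigmabeta=o(\log n)$ this offset is many standard deviations, so the interval you chose sits in the tilted tail. The fix is to center the window at the tilted mean $\mu_{\lambda_\star}:=\Lambda_d'(\lambda_\star)$ (which exceeds $C_n\sigmabeta\sqrt{\log n}$ because the third cumulant of a $\Bin(n,p)$ sum with $p<1/2$ is positive), apply Berry--Esseen there, and note that $\Lambda_d(\lambda_\star)-\lambda_\star\mu_{\lambda_\star}=-\tfrac12\sigmabeta^2\lambda_\star^2(1+o(1))$, so the extra drift costs only $o(\log n)$ in the exponent. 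Second, your inequality ``$\tilde\P_{\lambda_\star}(\beta_1X+\beta_2Y=m)\le\max_k\tilde\P_{\lambda_\star}(X=k)$'' is stated with the wrong coordinate: summing over $k$ (so that $Y$ is pinned to one value per $k$) yields the bound $\max_\ell\tilde\P_{\lambda_\star}(Y=\ell)\lesssim1/\tilde\sigma_{n2}$, and summing over $\ell$ yields $\max_k\tilde\P_{\lambda_\star}(X=k)\lesssim1/\tilde\sigma_{n1}$; you then take the better of the two. The conclusion $\lesssim1/\sigmabeta$ is unaffected since $\sigma_{n1}\asymp\sigma_{n2}\asymp\sigmabeta$ under the standing assumption $\liminf b'/a'>0$, but the intermediate step as written does not prove what it claims.
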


	\subsubsection{Exponential Scale Asymptotics}
	In this section, we first characterize the upper tail of the sum of two independent binomial random variables in the moderate deviation regime on the exponential scale, which requires much more subtle analysis than usual log-scale asymptotics. This result is used in establishing the lower bound for the maximum degree test. Specifically, we will establish the following result. Recall the definition of $d(\beta_1,\beta_2)=\beta_1X+\beta_2Y-\mubeta$, $\mubeta=\E(\beta_1X+\beta_2Y)$ and $\sigmabeta^2=\var(\beta_1X+\beta_2Y)$.	
	\begin{lemma}\label{lemma:binomial_tail_exp_scale}

		Let $b' \gg (\log n)^3$ and $x_n = \sqrt{2 \log n} (1 + o(1))$. In this case, we have, as $n \to \infty$, 
		\begin{align}
		\ & \lim_{n \to \infty} \frac{\P[X+Y-\E(X+Y) > \sqrt{\var(X+Y)} x_n ]}{1 - \Phi(x_n)}\\
		&=\lim_{n \to \infty} \frac{\P[d(1,1) > \sigma_n(1,1) x_n ]}{1 - \Phi(x_n)} \to 1, \nonumber 
		\end{align}
		where $\Phi(\cdot)$ is the cdf of the standard normal distribution. 
	\end{lemma}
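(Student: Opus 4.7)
The statement is a sharp ratio-to-one moderate deviation asymptotic for the sum $S := X+Y$ of two independent binomials in the regime $x_n = \sqrt{2\log n}(1+o(1))$. My plan is to decompose $S$ as a sum of $n$ independent bounded Bernoullis and invoke a classical Cram\'er-type sharp moderate deviation theorem for independent (non-identically distributed) bounded mean-zero variables. The hypothesis $b' \gg (\log n)^3$ is precisely what is needed to make the leading correction vanish.

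First, write
\[
X = \sum_{i=1}^{n/2} \xi_i, \qquad Y = \sum_{j=1}^{n/2} \eta_j, \qquad \xi_i \sim \mathrm{Ber}(a'/n),\ \eta_j \sim \mathrm{Ber}(b'/n),
\]
all independent, set $W_i = \xi_i - a'/n$, $V_j = \eta_j - b'/n$, and put $Z_k$ to be an enumeration of the $n$ variables $\{W_i,V_j\}$. Then $S - \E S = \sum_{k=1}^n Z_k$ is a sum of independent mean-zero variables with $|Z_k|\le 1$ and total variance $B_n^2 := \sigma_n(1,1)^2$. Since $b' \gg (\log n)^3$ we have $B_n^2 \gtrsim b' \gg (\log n)^3$, hence $B_n \gg (\log n)^{3/2}$.

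Next, apply the Cram\'er--Petrov sharp moderate deviation theorem (see Petrov, \emph{Sums of Independent Random Variables}, Ch.~VIII): for such independent bounded centered $Z_k$ with $\sum_k \E Z_k^2 = B_n^2$ and $|Z_k|\le L$, one has
\[
\frac{\P\bigl[\sum_k Z_k > x_n B_n\bigr]}{1-\Phi(x_n)} \;=\; \exp\Bigl(\tfrac{x_n^3}{B_n}\lambda\bigl(\tfrac{x_n L}{B_n}\bigr)\Bigr)\bigl(1 + O\bigl(\tfrac{(1+x_n) L}{B_n}\bigr)\bigr),
\]
valid uniformly for $1 \le x_n \le c\,B_n/L$, where $\lambda$ is Cram\'er's series (analytic near $0$, $\lambda(0)$ finite). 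In our case $L = 1$, $x_n = \sqrt{2\log n}(1+o(1))$, so
\[
\frac{x_n^3}{B_n} \;\lesssim\; \frac{(\log n)^{3/2}}{B_n} \;\ll\; 1, \qquad \frac{(1+x_n)L}{B_n} \;\lesssim\; \frac{\sqrt{\log n}}{B_n} \;\to\; 0.
\]
Both the exponential correction factor and the multiplicative error therefore tend to $1$, which yields the claim.

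\textbf{Main obstacle.} The log-scale version of this estimate is already contained in Lemma \ref{lemma:binomial_master}; the difficulty here is the \emph{ratio-to-one} asymptotics, which requires a local-CLT-type refinement under an exponentially tilted measure rather than a crude Chernoff bound. Concretely, if one prefers a self-contained derivation to citing Petrov, one would proceed by defining the tilt $\theta_n$ by the saddlepoint equation $\psi'(\theta_n) = x_n B_n$ where $\psi(\theta) = \log \E[e^{\theta(S-\E S)}]$, so that $\theta_n \sim x_n/B_n$ and $\psi(\theta_n) - \theta_n \psi'(\theta_n) = -\tfrac{x_n^2}{2}(1+o(1))$; then applying a Berry--Esseen bound to the tilted random variable $S$ (whose third absolute moment under the tilt is $O(B_n^2)$) gives the overshoot integral asymptotic $(x_n\sqrt{2\pi})^{-1}(1+o(1))$. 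The only place where $b' \gg (\log n)^3$ is used is in controlling the Berry--Esseen remainder: its size is $O(x_n/B_n) = O(\sqrt{\log n}/\sqrt{b'})$, which must be $o(1)$ when compared against the leading Gaussian factor $e^{-x_n^2/2}/x_n$, and the cube-of-log condition is exactly what guarantees this. The second equality in the lemma statement (between $\P[X+Y-\E(X+Y)>\sqrt{\var(X+Y)}x_n]$ and $\P[d(1,1)>\sigma_n(1,1)x_n]$) is just a restatement under the notation introduced in Section \ref{sec:binom}.
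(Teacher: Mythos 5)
Your proof is correct, but it takes a genuinely different route from the paper's. You decompose $X+Y$ as a sum of $n$ independent bounded centered Bernoullis and invoke the Cram\'er--Petrov sharp moderate-deviation theorem for non-identically-distributed summands; the condition $b'\gg(\log n)^3$ gives $B_n\gg(\log n)^{3/2}$, which puts $x_n\sim\sqrt{2\log n}$ in the range $x_n = o(B_n^{1/3})$ where both the Cram\'er-series factor $\exp(x_n^3\lambda(\cdot)/B_n)$ and the relative error $O((1+x_n)/B_n)$ collapse to $1$. That is clean, but it outsources the entire mechanism of the proof to a result that the paper never cites. The paper instead works at the level of the two binomials $X$ and $Y$ themselves: it writes $\P[X+Y=\mu_n+h]=\sum_{h_1}\P[X=\mu_{n1}+h_1]\P[Y=\mu_{n2}+h-h_1]$, invokes the explicit two-sided local CLT bounds for a \emph{single} binomial from Bollob\'as (Theorems~1.2 and~1.5) in a window $h_1^*\pm m^*$ of width $m^*\asymp\sigma_{n1}\sigma_{n2}/\sigma_n$ around the saddle $h_1^*=h\sigma_{n1}^2/\sigma_n^2$, shows the contribution outside the window is negligible (Lemma~\ref{lemma:eq_upper}), and then sums the resulting local estimate over $h$ against a Riemann-sum comparison with $1-\Phi(x_n)$. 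The two proofs use the hypothesis $b'\gg(\log n)^3$ in the same place conceptually (to kill the relative error in the local Gaussian approximation), but the paper's argument is self-contained modulo Bollob\'as, produces the sharp pointwise estimate $\P[X+Y=\mu_n+h]\sim\frac{1}{\sqrt{2\pi}\sigma_n}e^{-h^2/2\sigma_n^2}$ as an intermediate (which is reused elsewhere, e.g.\ in Lemma~\ref{lemma:bounds_five_sparse_graphs}), and parallels the method used throughout Section~\ref{sec:binom}; your argument is shorter but delivers only the tail ratio.

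One small caution if you were to write yours up: you should cite a version of the sharp moderate deviation theorem that is stated for non-identically-distributed summands with only a uniform Cram\'er (or boundedness) condition and whose error term is expressed in terms of $B_n$ rather than $n$, since your Bernoullis have vanishing individual variances $\asymp a'/n$; not every textbook statement is written in that generality, and a version keyed to $\sqrt{n}$ rather than $B_n$ would be useless here.
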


	\subsection{A Change of Measure Lemma}
	The next lemma is a simple change of measure argument which is necessary for truncated second moment arguments involved in proving information theoretic lower bounds.
	
	\begin{lemma}\label{lemma:binomial_change_of_measure}
		Let $X\sim \Bin(n_1,p_1)$ and $Y\sim \Bin(n_2,p_2)$ be independent. Then for any positive scalars $\alpha_1,\alpha_2,\beta_1,\beta_2$ and Borel set $B$ of $\mathbb{R}$
		\be 
		\ & 	\E\left(\alpha_1^X\alpha_2^Y\mathbf{1}\left(\beta_1X+\beta_2Y\in B\right)\right)\\
		&=(1-p_1+\alpha_1 p_1)^{n_1}(1-p_2+\alpha_2 p_2)^{n_2}\P(\beta_1X'+\beta_2Y'\in B),
		\ee
		where $X'\sim \Bin(n_1,p_1')$ is independent of $Y'\sim \Bin(n_2,p_2')$ with 
		\be 
		p_1'=\frac{\alpha_1 p_1}{1-p_1+\alpha_1 p_1},\quad p_2'=\frac{\alpha_2 p_2}{1-p_2+\alpha_2 p_2}.
		\ee
	\end{lemma}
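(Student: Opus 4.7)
The plan is to prove this identity by direct computation, writing out the expectation as a double sum over the supports of $X$ and $Y$, and recognizing the factor $\alpha_1^X \alpha_2^Y$ as implementing an exponential tilt (a Radon--Nikodym derivative between two product-binomial measures). Because the two factors in the integrand affect the distributions of $X$ and $Y$ separately while the indicator depends on $(X,Y)$ only through $\beta_1 X + \beta_2 Y$, the tilting can be done coordinate-wise and the indicator then simply transfers over to the tilted measure.

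First I would expand
\be
\E\bigl(\alpha_1^X\alpha_2^Y\mathbf{1}(\beta_1X+\beta_2Y\in B)\bigr)
= \sum_{k_1=0}^{n_1}\sum_{k_2=0}^{n_2}
\binom{n_1}{k_1}(\alpha_1 p_1)^{k_1}(1-p_1)^{n_1-k_1}
\binom{n_2}{k_2}(\alpha_2 p_2)^{k_2}(1-p_2)^{n_2-k_2}
\mathbf{1}(\beta_1 k_1+\beta_2 k_2 \in B),
\ee
using independence of $X$ and $Y$ and the binomial pmf. Next, for each $j\in\{1,2\}$, I would verify the elementary identity
\be
\binom{n_j}{k_j}(\alpha_j p_j)^{k_j}(1-p_j)^{n_j-k_j}
=(1-p_j+\alpha_j p_j)^{n_j}\binom{n_j}{k_j}(p_j')^{k_j}(1-p_j')^{n_j-k_j},
\ee
which follows by substituting $p_j' = \alpha_j p_j/(1-p_j+\alpha_j p_j)$ and $1-p_j' = (1-p_j)/(1-p_j+\alpha_j p_j)$ and checking that the powers of $(1-p_j+\alpha_j p_j)$ in the numerator and denominator balance to leave exactly $(1-p_j+\alpha_j p_j)^{n_j}$.

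Substituting these identities back into the double sum, the prefactor $(1-p_1+\alpha_1 p_1)^{n_1}(1-p_2+\alpha_2 p_2)^{n_2}$ factors out, and the remaining double sum is exactly
\be
\sum_{k_1,k_2}\binom{n_1}{k_1}(p_1')^{k_1}(1-p_1')^{n_1-k_1}\binom{n_2}{k_2}(p_2')^{k_2}(1-p_2')^{n_2-k_2}\mathbf{1}(\beta_1 k_1+\beta_2 k_2\in B) = \P(\beta_1 X'+\beta_2 Y'\in B),
\ee
where $X'\sim\Bin(n_1,p_1')$ is independent of $Y'\sim\Bin(n_2,p_2')$. This yields the claimed identity.

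There is no real obstacle here: the lemma is a standard exponential tilting statement and the only thing to be careful about is the bookkeeping of the normalizing factor $(1-p_j+\alpha_j p_j)^{n_j}$ when re-expressing the tilted pmf. Positivity of $\alpha_1,\alpha_2,p_1,p_2$ guarantees that $p_1',p_2'\in(0,1)$ so that $X',Y'$ are well-defined binomials, and no integrability condition is needed since $X,Y$ are bounded.
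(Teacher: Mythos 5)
Your proof is correct and follows essentially the same route as the paper: expand the expectation as a double sum over the binomial pmfs, absorb the factors $\alpha_j^{k_j}$ into tilted success probabilities $p_j'$, factor out the normalizers $(1-p_j+\alpha_j p_j)^{n_j}$, and recognize the remaining sum as $\P(\beta_1 X'+\beta_2 Y'\in B)$. The only cosmetic difference is your framing in terms of exponential tilting and your explicit statement of the per-coordinate pmf identity, neither of which changes the argument.
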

	We establish Lemma \ref{lemma:binomial_change_of_measure} in Appendix   \ref{section:technical_lemmas} of \cite{ms2019}.

	\section*{Acknowledgments} The authors thank the Editors, the Associate Editor, and two anonymous referees for numerous helpful comments and suggestions which substantially improved the content and presentation of the paper.
	
	\begin{supplement} 
		\stitle{Supplement to ``Testing Degree Corrections in Stochastic Block Models"}
		\slink[doi]{COMPLETED BY THE TYPESETTER}
		\sdatatype{.pdf}
		\sdescription{The supplementary material contain the proofs of additional technical results.}
	\end{supplement}

	\bibliographystyle{imsart-nameyear}
	\bibliography{biblio_dcsbm}
	
	\appendix
	
	\section{Proofs of main results}\label{section:proofs}
	
	\subsection{Proof of Theorem \ref{thm:dense}} We prove each part of the theorem in separate subsections below. 	\\
	
	\textit{Proof of Theorem \ref{thm:dense} \ref{thm:dense_upper}} 
	In this theorem, since all computations are under the true underlying $\C$ and the Total Degree Test does not depend on it, we drop the notational dependence on $\C=\{\tilde{S}_1,\ldots,\tilde{S}_k\}\in \mathcal{S}_k$ from $\Ptheta^{(\C)}$, $\Etheta^{(\C)}$, and $\mathrm{Var}^{(\C)}_{\mathbf{\Theta},a,b}$.
	
	We will establish the stronger result that the total degree test is powerful whenever there exists a sequence $t_n \to \infty$ such that $s A \sqrt{\frac{a}{n} } \gg t_n$. To this end, we need the following elementary lemma bounding the variance of the total degree. 
	
	\begin{lemma}
		\label{lemma:var_totaldegree}
		For any $\mathbf{\Theta} \in \Xi(s_n,A_n)$ with $\| \mathbf{\Theta} \|_{\infty} \leq 1$, ${\rm{Var}}_{\mathbf{\Theta},a,b}\Big[ \sum_i d_i \Big] \leq 8 a n$. 
	\end{lemma}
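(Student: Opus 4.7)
The plan is a direct variance computation that exploits the conditional independence of edges in the SBM. Since each edge contributes to two vertex degrees, I would first write
\[
\sum_{i=1}^n d_i \;=\; 2\sum_{i<j} Y_{ij},
\]
and then, using that $\{Y_{ij}\}_{i<j}$ are mutually independent Bernoulli variables given the community assignment $\mathcal{C}$, observe that
\[
\mathrm{Var}_{\mathbf{\Theta},a,b}\!\Bigl[\sum_{i=1}^n d_i\Bigr] \;=\; 4 \sum_{i<j} \mathrm{Var}_{\mathbf{\Theta},a,b}[Y_{ij}] \;\leq\; 4 \sum_{i<j} \E_{\mathbf{\Theta},a,b}[Y_{ij}],
\]
where the last inequality uses $\mathrm{Var}(Y_{ij}) \leq \E[Y_{ij}]$ for a Bernoulli random variable.

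For the second step, I would bound each summand using the explicit SBM edge probability $\E_{\mathbf{\Theta},a,b}[Y_{ij}] = \theta_i\theta_j p_{ij}$ with $p_{ij}\in\{a/n,b/n\}$, invoke $b\le a$ to replace $p_{ij}$ by $a/n$, and then use the crude bound $\sum_{i<j}\theta_i\theta_j \le \tfrac{1}{2}\|\mathbf{\Theta}\|_\infty^2\, n^2$. This gives
\[
\mathrm{Var}_{\mathbf{\Theta},a,b}\!\Bigl[\sum_{i=1}^n d_i\Bigr] \;\leq\; 2an\,\|\mathbf{\Theta}\|_\infty^2.
\]
In the signal regime of Theorem \ref{thm:dense}\ref{thm:dense_upper} one has $A \to 0$, so on $\Xi(s_n, A_n)$ one effectively has $\|\mathbf{\Theta}\|_\infty \le 1+A \le 2$ for all large $n$, and the stated bound of $8an$ follows.

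I do not foresee any real obstacle. The argument uses only conditional independence of edges, the Bernoulli variance bound, and a crude estimate of the quadratic $\sum \theta_i\theta_j$; the leading constant $8$ is loose and is chosen to absorb both the factor $4$ from the edge-degree doubling and a factor at most $4$ coming from $\|\mathbf{\Theta}\|_\infty^2$. The only mild caveat is the reading of the hypothesis "$\|\mathbf{\Theta}\|_\infty \le 1$" in light of the definition of $\Xi(s,A)$; as noted above this is automatic up to constants in the relevant asymptotic regime, so no tightening of the argument is needed.
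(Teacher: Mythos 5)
Your proof is correct and uses the same core observation as the paper, namely $\mathrm{Var}(Y_{ij})\leq \E[Y_{ij}]\leq \tfrac{4a}{n}$ combined with a decomposition of $\mathrm{Var}\bigl[\sum_i d_i\bigr]$ into edge variances (the paper phrases it via $\mathrm{cov}(d_i,d_j)=\mathrm{Var}(Y_{ij})$, you via $\sum_i d_i = 2\sum_{i<j}Y_{ij}$ and edge independence; these are the same computation). One small correction to your closing caveat: $\mathbf{\Theta}\in\Xi(s,A)$ imposes only the lower bounds $\theta_i\geq 1+A$ on signal coordinates and says nothing from above, so $\|\mathbf{\Theta}\|_\infty\leq 1+A$ is not a consequence of $\Xi$-membership; the bound $\|\mathbf{\Theta}\|_\infty\leq 2$ is used because the proof of Theorem~\ref{thm:dense}\ref{thm:dense_upper} reduces by monotonicity to alternatives of the exact form $(1+A)\mathbf{1}_S+\mathbf{1}_{S^c}$ with $A\leq 1$, and the lemma's stated hypothesis $\|\mathbf{\Theta}\|_\infty\leq 1$ (which is vacuous in conjunction with $\Xi(s,A)$ for $A>0$) should indeed read $\leq 2$, consistent with the paper's own bound $\E[Y_{ij}]\leq \tfrac{4a}{n}$.
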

	\begin{proof}
		The proof proceeds using the elementary observations ${\rm{Var}}_{\mathbf{\Theta}, a,b}(Y_{ij}) \leq \E_{\mathbf{\Theta},a,b}[Y_{ij}] \leq \frac{4a}{n}$ and ${\rm{cov}}_{\mathbf{\Theta}, a,b} (d_i , d_j) = {\rm{Var}}_{\mathbf{\Theta},a,b}[Y_{ij}] \leq \frac{4a}{n}$. 
	\end{proof}
	
	We are now ready to prove Theorem \ref{thm:dense} \ref{thm:dense_upper}.	We first compute the expectation of the total degree under the null. 
	\begin{align}
	\E_{\mathbf{1},a,b}\left[\sum_{i=1}^{n} d_i \right] =  \sum_{i \neq j } \E[Y_{ij}] = 2\left[\sum_{\ell=1}^{k}{ |\tilde{S}_{\ell}| \choose 2}\frac{a}{n} + \sum_{\ell < \ell'} |\tilde{S}_{\ell} | | \tilde{S}_{\ell'}| \frac{b}{n} \right] := \mu_n. \nonumber 
	\end{align}
	We consider a total degree test which rejects the null for $\sum_i d_i > \mu_n + K_n$ for some sequence $K_n$ to be chosen suitably during the proof. By Chebychev's inequality, we have, 
	\begin{align}
	\P_{\mathbf{1},a,b}\Big[\sum_i d_i > \mu_n + K_n \Big] \leq \frac{{\rm{Var}}_{\mathbf{\Theta},a,b}\Big[ \sum_i d_i \Big] }{K_n^2} \leq \frac{8an}{K_n^2}, \nonumber 
	\end{align}
	where the last inequality follows using Lemma \ref{lemma:var_totaldegree}. Thus the type I error is controlled as soon as $K_n^2 \gg an$. 
	We next turn to the type II error, and note that by monotonicity, it suffices to restrict ourselves to alternatives $\mathbf{\Theta} = (1 + A) \mathbf{1}_{S} + \mathbf{1}_{S^c}$ for some $A \leq 1$. We set $S_{\ell} = \tilde{S}_{\ell} \cap S(\mathbf{\Theta})$ for notational simplicity, denote $s_{\ell} = | S_{\ell} |$. In this case, we have, 
	\begin{align*}
	&\ \E_{\mathbf{\Theta},a,b}\left[\sum_i d_i  \right] \\
	&=  2 \left[\begin{array}{c}(1+A)^2 \frac{a}{n} \left[ \sum_{\ell=1}^{k}{s_{\ell} \choose 2} \right] + (1 + A) \frac{a}{n} \left[ \sum_{\ell=1}^{k} s_{\ell} ( |\tilde{S}_{\ell}| - s_{\ell})   \right] +  (1 + A)^2 \frac{b}{n} \sum_{\ell < \ell'} s_{\ell} s_{\ell'}\nonumber\\
	+ \frac{a}{n} \left[ \sum_{\ell=1}^{k} { |\tilde{S}_{\ell}| - s_{\ell} \choose 2}  \right]  +  (1 + A) \frac{b}{n} \left[ \sum_{\ell \neq \ell'} s_{\ell} \Big( |\tilde{S}_{\ell'}| - s_{\ell'} \Big) \right] + \sum_{\ell < \ell'} (|\tilde{S}_{\ell}| - s_{\ell}) (|\tilde{S}_{\ell'}| - s_{\ell'})   \frac{b}{n} \end{array}\right]. \nonumber 
	\end{align*}
	Therefore, we have, 
	\begin{align}
	&\E_{\mathbf{\Theta},a,b}\Big[\sum_i d_i  \Big] - \mu_n \geq  2\Big[A \frac{a}{n} \Big[ \sum_{\ell} s_\ell(s_\ell -1) \Big] + 2 A  \frac{b}{n} \sum_{\ell< \ell'} s_{\ell} s_{\ell'}  \nonumber\\
	&+ A \frac{a}{n} \Big[ \sum_{\ell} s_{\ell} (|\tilde{S}_{\ell}| - s_{\ell}) \Big] + A \frac{b}{n} \Big[ \sum_{\ell \neq \ell'} s_{\ell} (|\tilde{S}_{\ell'}| - s_{\ell'} )\Big]  \Big] \nonumber \\
	&\geq 2A \frac{b}{n} (n-1) s \geq  A b s.  \nonumber 
	\end{align}
	Therefore, we have, 
	\begin{align}
	\P_{\mathbf{\Theta},a,b}\Big[ \sum_i d_i - \mu_n < K_n \Big] &= \P_{\mathbf{\Theta},a,b} \Big[ \sum_i d_i - \E_{\mathbf{\Theta},a,b}\Big[\sum_i d_i  \Big] < K_n - \Big(\E_{\mathbf{\Theta},a,b}\Big[\sum_i d_i  \Big] - \mu_n \Big) \Big] . \nonumber \\
	&\leq  \P_{\mathbf{\Theta},a,b} \Big[ \sum_i d_i - \E_{\mathbf{\Theta},a,b}\Big[\sum_i d_i  \Big] < K_n -  Abs \Big].  \nonumber 
	\end{align}
	Thus if $K_n <  Abs$, using Chebychev's inequality, we have, 
	\be
	\P_{\mathbf{\Theta},a,b}\Big[ \sum_i d_i - \mu_n < K_n \Big]  &\leq \frac{8an}{\Big(K_n - \frac{1}{2} Abs \Big)^2 } . \label{eq:type2}
	\ee
	The type II error is controlled as soon as the RHS in \eqref{eq:type2} goes to zero as $n \to \infty$. Finally, it remains to choose $K_n$. We set $K_n = \frac{1}{4} Abs $ and note that under the theses of this theorem, both type I and type II errors are controlled asymptotically under this choice. The proof is complete. \qed

	\textit{Proof of Theorem \ref{thm:dense}. \ref{thm:dense_lower}}
	The proof proceeds by the usual argument of analyzing the second moment of the marginal likelihood. To this end, we fix a prior 
	$\pi$ which fixes the community labels $[n] = \tilde{S}_1 \cup \cdots \cup \tilde{S}_{k}$, where $|\tilde{S}_{\ell}| = \frac{n}{k}$ for $1 \leq \ell \leq k$. Note that this choice is allowed by the assumption that $\nu\geq 1$ for $\mathcal{S}^{\nu}_k$.

	The prior $\pi$ selects $s/k$ locations at random from each $\tilde{S}_{\ell}$ to form the set $S(\mathbf{\Theta})$. Given $S(\mathbf{\Theta})$, we set $\theta_i = 1+ A$ for $ i \in S(\Theta)$. 
	Now, given $\mathbf{\Theta}$, the likelihood ratio 
	\begin{align}
	&L_S = \frac{d \P_{\mathbf{\Theta}, a, b} } { d \P_{\mathbf{1},a,b}} \nonumber \\
	&= \prod_{ i < j, \mathcal{C}(i) = \mathcal{C}(j) } (\theta_i \theta_j)^{Y_{ij}} \Big(\frac{1- \theta_i \theta_j \frac{a}{n}}{1- \frac{a}{n}} \Big)^{(1- Y_{ij})} \prod_{ i < j, \mathcal{C}(i) \neq \mathcal{C}(j) } (\theta_i \theta_j)^{Y_{ij}} \Big(\frac{1- \theta_i \theta_j \frac{b}{n}}{1- \frac{b}{n}} \Big)^{(1- Y_{ij})}. \nonumber
	\end{align}
	We define the marginal likelihood $L_{\pi} = \E_{S}[L_S]$, where $\E_S[\cdot]$ denotes the expectation with respect to $S\sim \pi$. It suffices to establish that under the thesis of Theorem \ref{thm:dense_lower}, $L_{\pi} = 1 + o(1)$. To this end, we note that $\E_{\mathbf{1}, a,b} [ L_{\pi}] = \E_{S} [ E_{\mathbf{1},a,b} [ L_{S}]] = 1$ by Fubini's theorem. The result follows once we establish that $\E_{\mathbf{1}, a,b} [ L_{\pi}^2] = 1 + o(1)$ under the assumptions of Theorem \ref{thm:dense_lower}. This will be established in the rest of the proof. We note that $\E_{\mathbf{1},a,b}[ ( L_{\pi})^2 ] = \E_{S_1, S_2} [ \E_{\mathbf{1}, a, b} [ L_{S_1} L_{S_2}]]$, where $S_1, S_2$ are i.i.d draws from the measure $\pi$. 
	Setting $\mathbf{\Theta} : = \mathbf{\Theta}(S_1) = (\theta_1, \cdots, \theta_n)$ and $\overline{\mathbf{\Theta}} := \mathbf{\Theta}(S_2) = (\overline{\theta}_1, \cdots , \overline{\theta}_n)$ to denote the true parameter vectors corresponding to $S_1,S_2$ obtained under iid sampling from $\pi$, we have, 
	\begin{align}
	L_{S_1}L_{S_2} = &\prod_{ i < j : \mathcal{C}(i)= \mathcal{C}(j) } (\theta_i \theta_j \overline{\theta}_i \overline{\theta}_j)^{Y_{ij}} \Big[\Big( \frac{1- \theta_i \theta_j \frac{a}{n} }{1- \frac{a}{n}}\Big) \Big(\frac{1- \overline{\theta}_i \overline{\theta}_j \frac{a}{n}}{1- \frac{a}{n} } \Big) \Big]^{(1- Y_{ij})} \times \nonumber\\
	&\prod_{ i < j : \mathcal{C}(i) \neq \mathcal{C}(j) } (\theta_i \theta_j \overline{\theta}_i \overline{\theta}_j)^{Y_{ij}} \Big[\Big( \frac{1- \theta_i \theta_j \frac{b}{n} }{1- \frac{b}{n}}\Big) \Big(\frac{1- \overline{\theta}_i \overline{\theta}_j \frac{b}{n}}{1- \frac{b}{n} } \Big) \Big]^{(1- Y_{ij})}   \nonumber \\
	&:= \prod_{i < j } T_{ij}. \nonumber 
	\end{align}
	Further, we define 
	\begin{align}
	L_{S_1} L_{S_2} = \prod_{\ell =1}^{k} T_{\ell} \prod_{1\leq \ell < \ell' \leq k} T_{\ell, \ell'}, \nonumber
	\end{align}
	where 
	\begin{align}
	T_{\ell} := \prod_{i<j : \mathcal{C}(i) = \mathcal{C}(j) = \tilde{S}_{\ell} } T_{ij} , \,\,\,\, T_{\ell, \ell'} := \prod_{i < j : \mathcal{C}(i) =\tilde{S}_\ell, \mathcal{C}(j) =\tilde{S}_{\ell'} } T_{ij}. \nonumber  
	\end{align}
	Note that under the null hypothesis $H_0$, $\{T_{\ell}, T_{\ell, \ell'}\}$ are mutually independent and thus to analyze $\E_{\mathbf{1}, a,b}[L_{S_1} L_{S_2}]$, it suffices to study $\E_{\mathbf{1}, a,b}[T_{\ell}]$, $\E_{\mathbf{1},a,b}[T_{\ell, \ell'}]$ separately. We first analyze $T_{\ell}$, $1\leq \ell \leq k$. To this end. we define $Z_{\ell} = | S_1 \cap S_2 \cap \tilde{S}_{\ell} |$, $1\leq \ell \leq k$. Using independence of edges, we have, 
	\begin{align}
	&\E_{\mathbf{1}, a,b}[T_{\ell}] = \prod_{i < j : \mathcal{C}(i) = \mathcal{C}(j) = \tilde{S}_{\ell}} \E_{\mathbf{1},a,b}[ T_{ij}], \nonumber \\
	& \E_{\mathbf{1},a,b}[T_{ij}] = \frac{a}{n} \theta_i \theta_j \bar{\theta}_i \bar{\theta}_j + \Big( 1 - \frac{a}{n} \Big)  \Big[\Big( \frac{1- \theta_i \theta_j \frac{a}{n} }{1- \frac{a}{n}}\Big) \Big(\frac{1- \overline{\theta}_i \overline{\theta}_j \frac{a}{n}}{1- \frac{a}{n} } \Big) \Big].  \nonumber
	\end{align}
	We will encounter the following cases. 
	\begin{enumerate}
		\item $i,j \in S_1 \cap S_2 \cap \tilde{S}_{\ell}$. In this case, 
		\begin{align}
		\E_{\mathbf{1}, a,b}[T_{ij}] = (1 + A)^4 \frac{a}{n} + \Big( 1 - \frac{a}{n} \Big) \Big( \frac{1 - (1+A)^2 \frac{a}{n}}{1- \frac{a}{n}}\Big)^2.
		= 1 + \frac{\frac{a}{n}A^2}{1- \frac{a}{n} } (2+ A)^2. \nonumber 
		\end{align}
		There are ${ Z_{\ell} \choose 2 }$ such terms. 
		
		\item $i, j \in S_1 \cap S_2^c \cap \tilde{S}_{\ell}$ or $i,j \in S_1^c \cap S_2 \cap \tilde{S}_{\ell}$ or $i \in S_1 \cap S_2 \cap \tilde{S}_{\ell}$ while $j \in \tilde{S}_{\ell} \cap S_1^c \cap S_2^c$ or $i \in S_1 \cap S_2^c \cap \tilde{S}_{\ell}$, $j \in S_1^c \cap S_2 \cap \tilde{S}_{\ell}$. In this case, 
		\begin{align}
		\E_{\mathbf{1}, a,b} [T_{ij}] = (1+ A)^2 \frac{a}{n} + \Big( 1 - \frac{a}{n} \Big) \Big( \frac{1 - (1+ A) \frac{a}{n}}{1- \frac{a}{n}} \Big)^2 = 1 + \frac{\frac{a}{n} A^2 }{ 1 - \frac{a}{n}}. \nonumber 
		\end{align}
		There are $2 { \frac{s}{k} - Z_{\ell} \choose 2 } + Z_{\ell} \Big( \frac{n}{k} -\frac{2s}{k} + Z_{\ell} \Big) + \Big( \frac{s}{k} - Z_{\ell} \Big)^2$ many $(i,j)$ pairs which have this contribution. 
		
		\item $i \in S_1 \cap S_2 \cap \tilde{S}_{\ell}$, $j \in S_1 \cap S_2^c \cap \tilde{S}_{\ell}$ or $i \in S_1 \cap S_2 \cap \tilde{S}_{\ell}$, $ j \in S_1^c \cap S_2 \cap \tilde{S}_{\ell}$. We have, 
		\begin{align}
		\E_{\mathbf{1}, a,b}[T_{ij}] = (1+ A)^3 \frac{a}{n} + \Big( \frac{1- (1+A)^2 \frac{a}{n}}{ 1- (1+A) \frac{a}{n}}  \Big) \Big(\frac{1- (1+A) \frac{a}{n} }{1- \frac{a}{n}} \Big) \Big(1 - \frac{a}{n} \Big)
		= 1 + \frac{\frac{a}{n} A^2 }{ 1- \frac{a}{n}}(2 + A). \nonumber 
		\end{align}
		There are $2 Z_\ell \Big( \frac{s}{k} - Z_\ell \Big)$ many terms with this contribution. 
		
		\item For all other $(i,j)$ pairs, it is easy to check that $\E_{\mathbf{1}, a, b}[T_{ij}] =1$.

	\end{enumerate}
	
	We note that under the thesis of the Theorem, $A \to 0$ as $n \to \infty$. Thus we have the upper bound 
	\begin{align}
	&\E_{\mathbf{1}, a,b} [ T_{\ell} ] \leq \Big( 1 + C \frac{\frac{a}{n} A^2 }{1 - \frac{a}{n}} \Big)^ {U_\ell}, \nonumber \\
	&U_\ell = {Z_\ell \choose 2} + 2  {\frac{s}{k } - Z_\ell \choose 2} + Z_\ell \Big( \frac{n}{k} - 2\frac{s}{k} + Z_\ell \Big) + 2 Z_\ell \Big( \frac{s}{k} - Z_\ell \Big) + \Big( \frac{s}{k} - Z_\ell \Big)^2, \nonumber 
	\end{align}
	for some absolute constant $C >0$. Upon simplification, we obtain the bound 
	\be
	\E_{\mathbf{1}, a, b} [T_{\ell} ] \leq \Big( 1 + C \frac{\frac{a}{n}A^2 }{1 - \frac{a}{n}}  \Big)^{\frac{n}{k} Z_{\ell} + \frac{9}{2k^2} s^2 }. \label{eq:T1bound}
	\ee
	Finally, it remains to bound $T_{\ell, \ell'}$. To this end, our analysis proceeds similar to that of $T_{\ell}$ described above, and will thus be sketched briefly. Using independence of edges under $H_0$, we have $\E_{\mathbf{1},a,b}[T_{\ell, \ell'}] = \prod_{i<j: \mathcal{C}(i) = \tilde{S}_{\ell}, \mathcal{C}(j) = \tilde{S}_{\ell'}} \E_{\mathbf{1}, a,b}[T_{ij}]$. We encounter the following cases:
	\begin{enumerate}
		\item $i \in S_1 \cap S_2 \cap \tilde{S}_{\ell}$ and $j \in S_1 \cap S_2 \cap \tilde{S}_{\ell'}$. In this case, we have, 
		\begin{align}
		\E_{\mathbf{1},a,b}[T_{ij}] = (1 + A)^4 \frac{b}{n} + \Big( \frac{1 - (1+A)^2 \frac{b}{n} }{1 - \frac{b}{n} } \Big)^2 \Big( 1 - \frac{b}{n} \Big)= 1 + \frac{\frac{b}{n}A^2}{1- \frac{b}{n} } (2+ A)^2 . \nonumber 
		\end{align}
		There are $Z_{\ell} Z_{\ell'} $ terms with this contribution. 
		
		\item $i \in S_1 \cap S_2 \cap \tilde{S}_{\ell}, j \in S_1 \cap S_2^c \cap \tilde{S}_{\ell'}$ or $ i \in S_1 \cap S_2 \cap \tilde{S}_{\ell}, j \in S_1^c \cap S_2 \cap \tilde{S}_{\ell'}$ and the related pairs $i \in S_1 \cap S_2^c \cap \tilde{S}_{\ell} , j \in S_1 \cap S_2 \cap \tilde{S}_{\ell'}$ and $ i \in S_1^c \cap S_2 \cap \tilde{S}_{\ell}, j \in S_1 \cap S_2 \cap \tilde{S}_{\ell'}$. Each pair contributes
		\begin{align}
		\E_{\mathbf{1}, a,b}[T_{ij}]  = (1+ A)^3 \frac{b}{n} + \Big( \frac{1- (1+A)^2 \frac{b}{n}}{ 1- (1+A) \frac{b}{n}}  \Big) \Big(\frac{1- (1+A) \frac{b}{n} }{1- \frac{b}{n}} \Big) \Big(1 - \frac{b}{n} \Big)
		= 1 + \frac{\frac{b}{n} A^2 }{ 1- \frac{b}{n}}(2 + A). \nonumber 
		\end{align}
		There are $2 Z_{\ell} \Big( \frac{s}{k} - Z_{\ell'} \Big) + 2 Z_{\ell'} \Big( \frac{s}{k} - Z_{\ell} \Big)$ many terms with this contribution. 
		
		\item $i \in S_1 \cap S_2 \cap \tilde{S}_{\ell}, j \in S_1^c \cap S_2^c \cap \tilde{S}_{\ell'}$, or $i \in S_1^c \cap S_2^c \cap \tilde{S}_{\ell}, j \in S_1 \cap S_2 \cap \tilde{S}_{\ell'}$ or $ i \in S_1 \cap S_2^c \cap \tilde{S}_{\ell}, j \in S_1^c \cap S_2 \cap \tilde{S}_{\ell'}$ or $i \in S_1^c \cap S_2 \cap \tilde{S}_{\ell}, j \in S_1 \cap S_2^c \cap \tilde{S}_{\ell'}$. Each term contributes 
		\begin{align}
		\E_{\mathbf{1},a,b} [T_{ij}] = (1+ A)^2 \frac{b}{n} + \Big( 1 - \frac{b}{n} \Big) \Big( \frac{1 - (1+ A) \frac{b}{n}}{1- \frac{b}{n}} \Big)^2 = 1 + \frac{\frac{b}{n} A^2 }{ 1 - \frac{b}{n}}. \nonumber 
		\end{align}
		There are $Z_{\ell} \Big( \frac{n}{k} - \frac{2s}{k} + Z_{\ell'} \Big) + Z_{\ell'} \Big( \frac{n}{k} - \frac{2s}{k} + Z_\ell \Big) + 2 \Big( \frac{s}{k} - Z_{\ell} \Big) \Big( \frac{s}{k} - Z_{\ell'} \Big)$ many terms with this contribution. 
		
		\item Every other pair has $\E_{\mathbf{1},a,b} [T_{ij}]  =1$. 
	\end{enumerate}
	Similar considerations as for $T_{\ell}$ above lead to the upper bound 
	\begin{align}
	&\E_{\mathbf{1},a,b}[T_{\ell, \ell'}] \leq \Big( 1 + C \frac{\frac{b}{n}A^2}{1 - \frac{b}{n}} \Big)^{V_{\ell, \ell'}}, \nonumber \\
	&V_{\ell, \ell'} = Z_{\ell} Z_{\ell'} +  2 Z_{\ell} \Big( \frac{s}{k} - Z_{\ell'} \Big) + 2 Z_{\ell'} \Big( \frac{s}{k} - Z_{\ell} \Big)+ Z_{\ell} \Big( \frac{n}{k} - \frac{2s}{k} + Z_{\ell'} \Big) \nonumber \\
	&+ Z_{\ell'} \Big( \frac{n}{k} - \frac{2s}{k} + Z_{\ell} \Big) + 2 \Big( \frac{s}{k} - Z_{\ell} \Big) \Big( \frac{s}{k} - Z_{\ell'} \Big),  \nonumber 
	\end{align}
	for some absolute constant $C >0$. We note that $Z_{\ell} , Z_{\ell'} \leq \frac{s}{k}$ and thus $V_1 \leq \frac{7s^2}{k^2} + \frac{n}{k} (Z_{\ell} + Z_{\ell'})$. Finally, this yields the following upper bound on $T_{\ell, \ell'}$. 
	\be
	\E_{\mathbf{1},a,b}[T_{\ell, \ell'}] \leq \Big( 1 + C \frac{\frac{b}{n}A^2}{1 - \frac{b}{n}} \Big)^{\frac{7s^2}{k^2} + \frac{n}{k} (Z_\ell + Z_\ell')}. \label{eq:T3bound}
	\ee
	Combining \eqref{eq:T1bound} and \eqref{eq:T3bound}, we obtain, 
	\begin{align}
	&\E_{\mathbf{1},a,b}[L_{S_1} L_{S_2}] \leq \Big( 1 + C \frac{\frac{a}{n}A^2 }{1 - \frac{a}{n}}  \Big)^{\frac{n}{k} \sum_{\ell} Z_{\ell} + \frac{9}{2k} s^2 } \cdot \Big( 1 + C \frac{\frac{b}{n}A^2}{1 - \frac{b}{n}} \Big)^{\frac{7s^2 (k-1)}{2k} + \frac{n (k-1)}{k} \sum_{\ell}Z_{\ell} }. \nonumber \\
	&\leq \exp{\Big[ \frac{9C}{2} s^2 A^2 \Big( \frac{\frac{a}{n}}{1- \frac{a}{n}} + \frac{\frac{b}{n}}{1- \frac{b}{n} } \Big) \Big] } \cdot \exp{\Big[ Cn \Big[\sum_{\ell} Z_{\ell} \Big] A^2 \Big( \frac{\frac{a}{n}}{1- \frac{a}{n}} + \frac{\frac{b}{n}}{1- \frac{b}{n} }   \Big)\Big]}. \nonumber 
	\end{align}
	We note that under $\pi$, $\{Z_\ell: 1 \leq \ell \leq k \}$ are independent Hypergeometric($\frac{n}{k}$, $\frac{s}{k}$, $\frac{s}{k}$ ) random variables. Therefore, they are stochastically bounded by a ${\rm{Bin}} (\frac{s}{k}, \frac{s}{n-s})$ random variable and finally, we have, $\sum_{\ell} Z_\ell \stackrel{\mathrm{stoch}}{\lesssim} {\rm{Bin}}\Big( s, \frac{s}{n-s} \Big)$. This implies that 
	\begin{align}
	&\E_{S_1, S_2}\Big[ \exp{\Big[ Cn \Big[ \sum_{\ell} Z_{\ell} \Big] A^2 \Big( \frac{\frac{a}{n}}{1- \frac{a}{n}} + \frac{\frac{b}{n}}{1- \frac{b}{n} }   \Big)\Big]} \Big] \nonumber \\
	&\leq \Big( 1 - \frac{s}{n} + \frac{s}{n} \exp{\Big[ Cn A^2 \Big( \frac{\frac{a}{n}}{1- \frac{a}{n}} + \frac{\frac{b}{n}}{1- \frac{b}{n} } } \Big) \Big] \Big)^{s}\nonumber \\
	&\leq \exp{\Big[ \frac{s^2}{n} \Big( {\rm{e}}^{ CA^2 n ( \frac{a/n}{1- a/n} + \frac{b/n}{1- b/n} )   } -1 \Big) \Big] }. \label{eq: bound_temp1}
	\end{align}
	Finally, we note that under the assumptions of this theorem, $\alpha \leq \frac{1}{2}$ implies that $A^2 a \to 0$ as $n\to \infty$. Thus using the bound obtained in \eqref{eq: bound_temp1}, we obtain, 
	\begin{align}
	\E_{\mathbf{1},a,b}[L_{\pi}^2] &\leq \exp{\Big[ \frac{9C}{2} s^2 A^2 \Big( \frac{\frac{a}{n}}{1- \frac{a}{n}} + \frac{\frac{b}{n}}{1- \frac{b}{n} } \Big) \Big] } \cdot  \exp{\Big[ \frac{s^2}{n} \Big( {\rm{e}}^{ CA^2 n ( \frac{a/n}{1- a/n} + \frac{b/n}{1- b/n} )   } -1 \Big) \Big] } \nonumber \\
	&\leq \exp{\Big[ C_0 s^2 A^2 \Big( \frac{\frac{a}{n}}{1- \frac{a}{n}} + \frac{\frac{b}{n}}{1- \frac{b}{n} } \Big) \Big] } = 1 + o(1), \label{eq:useful_upper} 
	\end{align}
	where $C_0 >0 $ is some absolute constant, sufficiently large, and the final result follows using the assumptions of this theorem. This completes the proof.	
	
	\subsection{Proof of Theorem \ref{thm:sparse_lower}}
	This section will also have a common proof for both cases $\tau_a = \tau_b =0$ and $\tau_a > \tau_b >0$. The proof proceeds by an analysis of the truncated likelihood ratio under the least favorable prior. To this end, consider the prior $\pi$ which fixes the partition $[n] = \tilde{S}_1 \cup \cdots \cup \tilde{S}_k$, with $|\tilde{S}_{\ell} | = n/k$ for all $1\leq \ell \leq n$. For any $i \in \{1, \cdots , n\}$, let $\mathcal{C}(i) = \tilde{S}_{\ell}$ if $i \in \tilde{S}_{\ell}$. Note that this choice is allowed by the assumption that $\nu\geq 1$ for $\mathcal{S}^{\nu}_k$.
	
	Further, the prior chooses $s/k$ elements (assuming $s$ is divisible by $k$ w.l.o.g.) randomly from each $\tilde{S}_{\ell}$ to form the set $S(\mathbf{\Theta})$. Given $S(\mathbf{\Theta})$, we set $\theta_i = 1+ A$ for $i \in S(\mathbf{\Theta})$ and $\theta_i = 1$ otherwise. In the rest of the proof, we denote the set $S(\Theta)$ as $S$. 
	
	Now, for any such given $\mathbf{\Theta}$, the likelihood ratio 
	\begin{align}
	&L_S = \frac{d \P_{\mathbf{\Theta}, a, b} } { d \P_{\mathbf{1},a,b}} \label{eq:ls} \\
	&= \prod_{ i < j, \mathcal{C}(i) = \mathcal{C}(j) } (\theta_i \theta_j)^{Y_{ij}} \Big(\frac{1- \theta_i \theta_j \frac{a}{n}}{1- \frac{a}{n}} \Big)^{(1- Y_{ij})} \prod_{ i < j, \mathcal{C}(i) \neq \mathcal{C}(j) } (\theta_i \theta_j)^{Y_{ij}} \Big(\frac{1- \theta_i \theta_j \frac{b}{n}}{1- \frac{b}{n}} \Big)^{(1- Y_{ij})}. \nonumber
	\end{align} 
	For $i \in S$, with slight abuse of notation, we define the out-degree to vertices in $S^c \cap \mathcal{C}(i)$ as $d_i(1) = \sum_{j \in \mathcal{C}(i)\cap S^c} Y_{ij}$ while the out-degree to vertices in the opposite block corresponds to $ d_i(2) = \sum_{j \in \mathcal{C}(i)^c \cap S^c} Y_{ij}$. Under $H_0$, we have, 
	\begin{align}
	\E_{\mathbf{1},a,b} \Big[d_i(1) \Big] = \frac{n-s}{k} \cdot \frac{a}{n} ,\,\,\,\, \E_{\mathbf{1},a,b} \Big[d_i(2) \Big]= (k-1)\frac{n-s}{k} \cdot \frac{b}{n}. \nonumber \\
	\var_{\mathbf{1},a,b} [d_i(1)] = \frac{n-s}{k} \cdot \frac{a}{n} \cdot \Big(1- \frac{a}{n} \Big),\,\,\,\, \var_{\mathbf{1},a,b} [d_i(2)] =  (k-1)\frac{n-s}{k} \cdot \frac{b}{n} \cdot \Big( 1 -\frac{b}{n} \Big). \nonumber 
	\end{align}

	Further, we define the constants 
	\begin{align}
	\beta_1^* = \frac{1}{1- \tau_a} \frac{1}{\sqrt{ \frac{1}{k} \frac{\tau_a}{1- \tau_a} + \frac{k-1}{k} \frac{\tau_b}{1- \tau_b}}},\,\,\,\,
	\beta_2^* =  \frac{1}{1- \tau_b} \frac{1}{\sqrt{ \frac{1}{k} \frac{\tau_a}{1- \tau_a} +  \frac{k-1}{k} \frac{\tau_b}{1- \tau_b}}}\nonumber
	\end{align}
	For $i \in S$, consider the ``good" event 
	\begin{align}
	\Gamma_{S,i} = \Big\{ \frac{\beta_1^*(d_i(1) - \E_{\mathbf{1}, a,b} [d_i(1)]) + \beta_2^* (d_i(2) - \E_{\mathbf{1},a,b} [d_i(2)])}{\sqrt{(\beta_1^*)^2 \var_{\mathbf{1},a,b}[d_i(1)] + (\beta_2^*)^2 \var_{\mathbf{1},a,b}[d_i(2)]}} \leq \sqrt{2 \log n}\Big\}. \nonumber 
	\end{align} 
	We set $\Gamma_S = \cap_{i \in S} \Gamma_{S,i}$. 
	We define $\tilde{L}_{\pi} = \E_{S}[L_S \mathbf{1}_{\Gamma_S}]$, where $\E_{S}[\cdot]$ denotes the expectation with respect to $S\sim \pi$. Then it suffices to establish that if $A$ is of the form \eqref{eq:signal_const} with $C < \csparse(\alpha)$, $\E_{\mathbf{1},a,b}[\tilde{L}_{\pi}] = \E_{\mathbf{1},a,b}[(\tilde{L}_{\pi} )^2] = 1 + o(1)$. This will complete the proof of the required lower bound. 
	
	To this end, we note that by Fubini's theorem, 
	$\E_{\mathbf{1},a,b}[\tilde{L}_{\pi}] = \E_{S} [ \E_{\mathbf{1},a,b}[ L_S \mathbf{1}_{\Gamma_S} ]]$. Further, we have, $\E_{\mathbf{1},a,b}[L_S \mathbf{1}_{\Gamma_S}] = 1 - \E_{\mathbf{1},a,b}[L_S \mathbf{1}_{\Gamma_S^c} ]$ and that 
	\begin{align}
	\E_{\mathbf{1},a,b}[L_S \mathbf{1}_{\Gamma_S^c}] \leq \sum_{i \in S} \E_{\mathbf{1},a,b}[ L_S \mathbf{1}_{\Gamma_{S,i}^c}] = \sum_{i \in S} \P\Big[ \frac{\beta_1^* ( X - \frac{n-s}{k}\frac{a}{n}) + \beta_2^* (Y- \frac{(n-s)(k-1)}{k} \frac{b}{n}) }{\sqrt{(\beta_1^* )^2 \frac{n-s}{k} \frac{a}{n} \Big( 1 - \frac{a}{n} \Big) + (\beta_2^*)^2 \frac{(n-s)(k-1)}{k} \frac{b}{n} \Big( 1 - \frac{b}{n} \Big)}} > \sqrt{2\log n} \Big], \nonumber 
	\end{align}
	using Lemma \ref{lemma:binomial_change_of_measure}, with $X \sim {\rm{Bin}}( \frac{n-s}{k}, \frac{a}{n}(1 + A) )$, $Y \sim {\rm{Bin}}(\frac{(n-s)(k-1)}{k}, \frac{b}{n}(1+A))$. We note that 
	\begin{align}
	&\P\Big[ \frac{\beta_1^* ( X - \frac{n-s}{k}\frac{a}{n}) + \beta_2^* (Y- \frac{(n-s)(k-1)}{k} \frac{b}{n}) }{\sqrt{(\beta_1^* )^2 \frac{n-s}{k} \frac{a}{n} \Big( 1 - \frac{a}{n} \Big) + (\beta_2^*)^2 \frac{(n-s)(k-1)}{k} \frac{b}{n} \Big( 1 - \frac{b}{n} \Big)}} > \sqrt{2\log n} \Big] \nonumber \\
	&=\P\Big[ \frac{\beta_1^* (X - \E[X]) + \beta_2^* (Y- \E[Y])}{\sqrt{\var (\beta_1^* X + \beta_2^* Y ) }} > \sqrt{2 \log n} - \sqrt{C \log n} \sqrt{\frac{ \frac{1}{k} \frac{\tau_a}{1- \tau_a} + \frac{k-1}{k}\frac{\tau_b}{1 - \tau_b }}{ \frac{1}{k} \tau_a (1 - \tau_a) + \frac{k-1}{k} \tau_b(1-\tau_b)}} \Big]. \nonumber \\
	&\leq \exp{\Big\{ - \log n \Big(1- \sqrt{\frac{C}{2} \frac{ \frac{1}{k} \frac{\tau_a}{1- \tau_a} + \frac{k-1}{k} \frac{\tau_b}{1-\tau_b} }{ \frac{1}{k} \tau_a(1-\tau_a) + \frac{k-1}{k}  \tau_b (1-\tau_b)}} \Big)^2 (1+o(1)) \Big\} } \nonumber \\
	&= n^{-  \Big(1- \sqrt{\frac{C}{2} \frac{ \frac{1}{k} \frac{\tau_a}{1- \tau_a} + \frac{k-1}{k} \frac{\tau_b}{1-\tau_b} }{ \frac{1}{k} \tau_a(1-\tau_a) + \frac{k-1}{k} \tau_b (1-\tau_b)}} \Big)^2 (1+o(1))}, \nonumber
	\end{align}
	using Lemma \ref{lemma:binomial_master} Part \ref{lemma:binomial_tail_pure}. Thus we finally have,
	\begin{align}
	\E_{\mathbf{1},a,b}[L_S \mathbf{1}_{\Gamma_S^c}] \leq n^{1- \alpha - \Big(1- \sqrt{\frac{C}{2} \frac{ \frac{1}{k} \frac{\tau_a}{1- \tau_a} + \frac{k-1}{k} \frac{\tau_b}{1-\tau_b} }{ \frac{1}{k} \tau_a(1-\tau_a) + \frac{k-1}{k} \tau_b (1-\tau_b)}} \Big)^2 + o(1) } = o(1) \nonumber 
	\end{align}
	if $C < \csparse(\alpha)$. This completes the first part of the proof. 
	
	To study the truncated second moment, we note that $\E_{\mathbf{1},a,b} [ ( \tilde{L}_{\pi})^2 ] = \E_{\mathbf{1}, a,b} [ \E_{S_1,S_2}[ L_{S_1} L_{S_2} \mathbf{1}_{\Gamma_{S_1} \cap \Gamma_{S_2}}] ]$, where $S_1, S_2$ are iid draws from the measure $\pi$. Now, we note that on the event $\Gamma_{S_1} \cap \Gamma_{S_2}$, for $i \in S_1 \cap S_2$, we have, 
	\begin{align}
	&\beta_1^* \Big( \sum_{j \in S_1^c \cap S_2^c \cap \mathcal{C}(i)} Y_{ij}  \Big) + \beta_2^* \Big( \sum_{j \in S_1^c \cap S_2^c \cap \mathcal{C}(i)^c } Y_{ij} \Big) \leq  \nonumber\\
	&\beta_1^* \Big(\frac{n-s}{k} \Big)\frac{a}{n} \Big( 1- \frac{a}{n} \Big) + \beta_2^*  \frac{(n-s)(k-1)}{k}  \frac{b}{n} \Big( 1- \frac{b}{n} \Big) \nonumber\\
	&+ \sqrt{2 \log n} \sqrt{\frac{n-s}{k}  (\beta_1^*)^2 \frac{a}{n} \Big( 1- \frac{a}{n}\Big) + (\beta_2^*)^2 \frac{(n-s)(k-1)}{k}\frac{b}{n} \Big(1 - \frac{b}{n}  \Big) }. \nonumber  
	\end{align}
	For $i \in S_1 \cap S_2$, we denote the above event as $\mathscr{C}_{S_1, S_2, i}$. Finally, we set $\mathscr{C}_{S_1, S_2} = \cap_{ i \in S_1 \cap S_2} \mathscr{C}_{S_1, S_2, i}$. The above discussion implies that $\Gamma_{S_1} \cap \Gamma_{S_2} \subseteq \mathscr{C}_{S_1, S_2}$ and therefore $\E_{\mathbf{1},a,b}[L_{S_1}L_{S_2} \mathbf{1}_{\Gamma_{S_1}\cap \Gamma_{S_2}}] \leq \E_{\mathbf{1},a,b}[L_{S_1}L_{S_2} \mathbf{1}_{\mathscr{C}_{S_1,S_2}}]$. Setting $\mathbf{\Theta} : = \mathbf{\Theta}(S_1) = (\theta_1, \cdots, \theta_n)$ and $\overline{\mathbf{\Theta}} := \mathbf{\Theta}(S_2) = (\overline{\theta}_1, \cdots , \overline{\theta}_n)$ to denote the true parameter vectors corresponding to $S_1,S_2$ obtained under iid sampling from $\pi$, we have, 
	\begin{align}
	L_{S_1}L_{S_2} = &\prod_{ i < j : \mathcal{C}(i)= \mathcal{C}(j) } (\theta_i \theta_j \overline{\theta}_i \overline{\theta}_j)^{Y_{ij}} \Big[\Big( \frac{1- \theta_i \theta_j \frac{a}{n} }{1- \frac{a}{n}}\Big) \Big(\frac{1- \overline{\theta}_i \overline{\theta}_j \frac{a}{n}}{1- \frac{a}{n} } \Big) \Big]^{(1- Y_{ij})} \times \nonumber\\
	&\prod_{ i < j : \mathcal{C}(i) \neq \mathcal{C}(j) } (\theta_i \theta_j \overline{\theta}_i \overline{\theta}_j)^{Y_{ij}} \Big[\Big( \frac{1- \theta_i \theta_j \frac{b}{n} }{1- \frac{b}{n}}\Big) \Big(\frac{1- \overline{\theta}_i \overline{\theta}_j \frac{b}{n}}{1- \frac{b}{n} } \Big) \Big]^{(1- Y_{ij})} . \nonumber\\
	&:= \gamma_0 \prod_{i \in S_1 \cap S_2} \tilde{T}_i , \nonumber 
	\end{align}
	where 
	\begin{align}
	\tilde{T}_i = &\prod_{j \in S_1^c \cap S_2^c \cap \mathcal{C}(i)} (1+A)^{2 Y_{ij}} \Big[\frac{1- (1+A)\frac{a}{n}}{1- \frac{a}{n}} \Big]^{2(1-Y_{ij})} 
	\prod_{j \in S_1^c \cap S_2^c \cap \mathcal{C}(i)^c} (1+ A)^{2 Y_{ij}} \Big[ \frac{1- (1+A)\frac{b}{n}}{1- \frac{b}{n}} \Big]^{2(1-Y_{ij})}. \nonumber 
	\end{align}
	Further, it is easy to see that under $H_0$, $\gamma_0$ and $\prod_{i \in S_1 \cap S_2} \tilde{T}_i$ are independent and therefore 
	\begin{align}
	\E_{\mathbf{1},a,b}[L_{S_1}L_{S_2} \mathbf{1}_{\mathscr{C}_{S_1,S_2}}] = \E_{\mathbf{1},a,b}[\gamma_0] \E_{\mathbf{1},a,b}\Big[\Big( \prod_{i \in S_1 \cap S_2} \tilde{T}_i \Big)\mathbf{1}_{\mathscr{C}_{S_1,S_2}} \Big]. \label{eq:int1}
	\end{align} 
	We will use the following lemma. The proof is similar to the case for $\alpha \leq 1/2$ and will thus be deferred to the end of the section. 
	
	\begin{lemma}
		\label{lemma:gamma0}
		As $n \to \infty$, $\E_{\mathbf{1},a,b}[\gamma_0] = 1+o(1)$, uniformly over all $S_1, S_2 \subset \{1, \cdots, n\}$, with $|S_i| =s= n^{1- \alpha}$, $i =1.2$, such that $|S_i \cap \tilde{S}_{\ell} | = \frac{s}{k}$, $i=1,2$, $1\leq \ell \leq k$. 
	\end{lemma}
	We will complete the lower bound proof assuming Lemma \ref{lemma:gamma0}. Using Lemma \ref{lemma:binomial_change_of_measure}, we have, setting $Z_\ell = |S_1 \cap S_2 \cap \tilde{S}_{\ell}|$,  $1 \leq \ell \leq k$, 
	\begin{align}
	&\E_{\mathbf{1},a,b} [\tilde{T}_i \mathscr{C}_{S_1, S_2,i} ] = \Big( 1 + \frac{\frac{a}{n} A^2 }{ 1 - \frac{a}{n}} \Big)^{\frac{n}{k} - \frac{2s}{k} + Z_{\mathcal{C}(i)}} \Big( 1 + \frac{\frac{b}{n} A^2}{1- \frac{b}{n}} \Big)^{\frac{(k-1)n}{k} -  \frac{2(k-1)s}{k} + \sum_{\ell \neq \mathcal{C}(i)} Z_\ell} \times \nonumber \\ 
	&\P \Big[ \beta_1^* X' + \beta_2^* Y' \leq  \frac{n-s}{k}   \beta_1^*\frac{a}{n} +  \frac{(k-1)(n-s)}{k}\beta_2^* \frac{b}{n}  \nonumber \\
	&+ \sqrt{2\log n} \sqrt{ \frac{n-s}{k} (\beta_1^*)^2  \frac{a}{n} \Big(1 - \frac{a}{n} \Big) +  \frac{(n-s)(k-1)}{k}(\beta_2^*)^2  \frac{b}{n} \Big(1- \frac{b}{n} \Big) } \Big], \nonumber 
	\end{align}
	where $X' \sim {\rm{Bin}} \Big(\frac{n}{k} - \frac{2s}{k} + Z_{\mathcal{C}(i)}, \frac{\frac{a}{n}(1+ A)^2}{1 + \frac{\frac{a}{n} A^2}{1- \frac{a}{n}}} \Big)$ and $Y' \sim {\rm{Bin}} \Big( \frac{n(k-1)}{k} - \frac{2(k-1)s}{k} + \sum_{\ell \neq \mathcal{C}(i)} Z_\ell , \frac{\frac{b}{n}(1+ A)^2}{1 + \frac{\frac{b}{n} A^2}{1- \frac{b}{n}}} \Big)$. Upon using Taylor approximation, we have, 
	\begin{align}
	&\P \Big[ \beta_1^* X' + \beta_2^* Y' \leq  \frac{n-s}{k}   \beta_1^*\frac{a}{n} + \frac{(k-1)(n-s)}{k} \beta_2^* \frac{b}{n}  \nonumber\\
	&+ \sqrt{2\log n} \sqrt{ \frac{n-s}{k} (\beta_1^*)^2  \frac{a}{n} \Big(1 - \frac{a}{n} \Big) + \frac{(n-s)(k-1)}{k}(\beta_2^*)^2  \frac{b}{n} \Big(1- \frac{b}{n} \Big) } \Big] \nonumber\\
	&= \P\Big[ \frac{\beta_1^* (X' - \E[X']) + \beta_2^* (Y' - \E[Y'])}{\sqrt{\var(\beta_1^* X' + \beta_2^* Y')}} < \sqrt{2 \log n} \Big( 1 - 2 C(\tau_a, \tau_b) (1+ o(1)) \Big)  ]. \nonumber 
	\end{align}
	where we set 
	\begin{align}
	C(\tau_a, \tau_b) = \sqrt{\frac{C}{2} \frac{ \frac{1}{k} \frac{\tau_a}{1- \tau_a} + \frac{k-1}{k} \frac{\tau_b}{1- \tau_b} }{ \frac{1}{k} \tau_a (1 - \tau_a) +  \frac{k-1}{k}\tau_b(1-\tau_b)} } . \nonumber 
	\end{align}
	
	We next run into two cases. Consider first the case when $2 C(\tau_a, \tau_b) <1$. In this case, we bound the above probability by $1$. Therefore, we have, 
	\begin{align}
	\E_{\mathbf{1},a,b}[L_{S_1} L_{S_2} \mathbf{1}_{\mathscr{C}_{S_1,S_2}} ] &\leq  \prod_{\ell = 1}^{k} \Big[ \Big( 1 + \frac{\frac{a}{n} A^2 }{1- \frac{a}{n}}\Big)^{\frac{n}{k} -  \frac{2s}{k} + Z_\ell} \Big( 1 + \frac{\frac{b}{n} A^2 }{1- \frac{b}{n}} \Big)^{\frac{(k-1)n}{k} - \frac{(k-1)s}{k} + \sum_{\ell' \neq \ell} Z_{\ell'}  } \Big]^{Z_{\ell} } \nonumber \\
	&\leq \exp{\Big[ n \Big( \sum_{\ell} Z_{\ell}  \Big) A^2 \Big( \frac{1}{k} \frac{ \frac{a}{n} }{1 - \frac{a}{n}} + \frac{k-1}{k}\frac{ \frac{b}{n} }{ 1 - \frac{b}{n}} \Big) \Big]} \nonumber \\
	&\leq \exp{\Big[ \Big( \sum_{\ell} Z_{\ell}  \Big)\Big( \frac{ \frac{1}{k}  \frac{\tau_a}{1 - \tau_a} + \frac{k-1}{k} \frac{\tau_b }{1- \tau_b}}{ \frac{1}{k} \tau_a ( 1 - \tau_a) + \frac{k-1}{k} \tau_b (1 - \tau_b) } \Big) C \log n\Big]}. \nonumber 
	\end{align}
	Now, $\sum_{\ell} Z_{\ell} $ can be dominated stochastically by $U \sim {\rm{Bin}}( s, \frac{s}{n})$. Therefore, 
	\begin{align}
	\E_{\mathbf{1}, a,b} [ \tilde{L}_{\pi}^2]& \leq \E_{S_1, S_2} [ \E_{\mathbf{1},a,b}[ L_{S_1} L_{S_2} \mathbf{1}_{\mathscr{C}_{S_1, S_2}} ] ]. \nonumber  \\
	&\leq \E[ n^{ 2 C^2(\tau_a, \tau_b) U }] = \Big[ \Big( 1 - \frac{s}{n} \Big) + \frac{s}{n} n^{2 C^2(\tau_a, \tau_b) } \Big]^s \nonumber \\ 
	&\leq \exp {\Big[ \frac{s^2}{n} n^{2 C^2(\tau_a, \tau_b) } \Big]} = \exp{[n^{1 - 2 \alpha + 2 C^2(\tau_a, \tau_b)} ]} = 1 + o(1)   \nonumber 
	\end{align}
	if $C < 2 \Big( \frac{ \frac{1}{k}  \tau_a(1-\tau_a) +  \frac{k-1}{k} \tau_b(1-\tau_b)}{ \frac{1}{k} \frac{\tau_a}{1- \tau_a} + \frac{k-1}{k}  \frac{\tau_b}{1-\tau_b}}  \Big) \Big(\alpha - \frac{1}{2} \Big) $. This concludes the proof in this case. 
	
	Next, we deal with the case $2C(\tau_a, \tau_b) >1$. It is easy to see that for $C < \csparse(\alpha)$, this is possible only for $\alpha > 3/4$. In this case, using Lemma \ref{lemma:binomial_master} Part \ref{lemma:binomial_equal_pure},  
	\begin{align}
	&\P\Big[ \frac{\beta_1^* (X' - \E[X']) + \beta_2^* (Y' - \E[Y'])}{\sqrt{\var(\beta_1^* X' + \beta_2^* Y')}} < \sqrt{2 \log n} \Big( 1 - 2 C(\tau_a, \tau_b) \Big) \Big] \nonumber\\
	&\leq \exp{ \Big[ - \log n \Big(  1 - 2 C(\tau_a, \tau_b) \Big)^2  (1+o(1)) \Big]}= n^{- (1- 2C(\tau_a,\tau_b) )^2 (1 +o(1)) }\nonumber 
	\end{align}
	In this case, upon repeating the calculation above, we obtain, 
	\begin{align}
	\E_{\mathbf{1},a,b}[ \tilde{L}_{\pi}^2] &\leq \E_{U} \Big[ \exp{\{ U \log n  f(\tau_a, \tau_b) \} }\Big]\leq \exp{ \{ n^{1- 2 \alpha + f(\tau_a, \tau_b)} \} }, \nonumber \\
	f(\tau_a, \tau_b) &= 2 C(\tau_a, \tau_b) ^2 - (1 - 2 C(\tau_a, \tau_b))^2 . \nonumber 
	\end{align}
	It is easy to see by direct computation that $1- 2\alpha -f(\tau_a, \tau_b) <0$ when $C < \csparse(\alpha)$. The proof will thus be complete, once we establish Lemma \ref{lemma:gamma0}.
	\begin{proof}[Proof of Lemma \ref{lemma:gamma0}:] 
		The proof borrows heavily from that of Theorem \ref{thm:dense}\ref{thm:dense_lower}. Upon using the same notation as in the proof of Theorem \ref{thm:dense}\ref{thm:dense_lower}, we have, $\gamma_0 = \prod_{\{i,j\} \in \mathscr{A}} T_{ij}$, where 
		\begin{align}
		\mathscr{A} = \{ \{i,j \} : i \in S_1 \cap S_2 , j \in S_1^c \cap S_2^c \}^c. \nonumber
		\end{align}
		As in the proof of Theorem \ref{thm:dense}\ref{thm:dense_lower}, we decompose $ \gamma_0 = \prod_{\ell=1}^{k} T_{\ell} \prod_{\ell < \ell'} T_{\ell, \ell'}$, with $T_\ell = \prod_{\{i, j\} \in \mathscr{A}_\ell} T_{ij}$, $1\leq \ell \leq k$, $T_{\ell, \ell'} =\prod_{\{i, j\} \in \mathscr{A}_\ell} T_{ij}$, $1\leq \ell < \ell' \leq k$, where we set
		\begin{align}
		\mathscr{A}_\ell &= \{ \{i,j\} \in \mathscr{A} : i, j \in \tilde{S}_{\ell} \}, \nonumber \\
		\mathscr{A}_{\ell, \ell'} &= \{ \{i,j\} \in \mathscr{A} : i \in \tilde{S}_{\ell}, j \in \tilde{S}_{\ell'} \}, \nonumber
		\end{align}
		We note that under $\P_{\mathbf{1},a,b}[\cdot]$, $\{T_\ell: 1 \leq \ell \leq k\}$, $\{T_{\ell, \ell'}: 1 \leq \ell < \ell' \leq k\}$ are independent--- we will bound each expectation in turn. Further, using independence of the edges, we have, 
		\begin{align}
		\E_{\mathbf{1},a,b}[T_\ell] = \prod_{\{i, j\} \in \mathscr{A}_\ell} \E_{\mathbf{1},a,b}[T_{ij}]. \nonumber
		\end{align}
		
		We will encounter the following cases. 
		\begin{enumerate}
			\item $i,j \in S_1 \cap S_2 \cap \tilde{S}_{\ell}$. In this case, 
			\begin{align}
			\E_{\mathbf{1}, a,b}[T_{ij}] = (1 + A)^4 \frac{a}{n} + \Big( 1 - \frac{a}{n} \Big) \Big( \frac{1 - (1+A)^2 \frac{a}{n}}{1- \frac{a}{n}}\Big)^2.
			= 1 + \frac{\frac{a}{n}A^2}{1- \frac{a}{n} } (2+ A)^2. \nonumber 
			\end{align}
			There are ${ Z_\ell \choose 2 }$ such terms. 
			
			\item $i, j \in S_1 \cap S_2^c \cap \tilde{S}_{\ell}$ or $i,j \in S_1^c \cap S_2 \cap \tilde{S}_{\ell}$ or $i \in S_1 \cap S_2^c \cap \tilde{S}_{\ell}$, $j \in S_1^c \cap S_2 \cap \tilde{S}_{\ell}$. In this case, 
			\begin{align}
			\E_{\mathbf{1}, a,b} [T_{ij}] = (1+ A)^2 \frac{a}{n} + \Big( 1 - \frac{a}{n} \Big) \Big( \frac{1 - (1+ A) \frac{a}{n}}{1- \frac{a}{n}} \Big)^2 = 1 + \frac{\frac{a}{n} A^2 }{ 1 - \frac{a}{n}}. \nonumber 
			\end{align}
			There are $2 { \frac{s}{k} - Z_{\ell} \choose 2 } + \Big( \frac{s}{k} - Z_\ell \Big)^2$ many $(i,j)$ pairs which have this contribution. 
			
			\item $i \in S_1 \cap S_2 \cap \tilde{S}_{\ell}$, $j \in S_1 \cap S_2^c \cap \tilde{S}_{\ell}$ or $i \in S_1 \cap S_2 \cap \tilde{S}_{\ell}$, $ j \in S_1^c \cap S_2 \cap \tilde{S}_{\ell}$. We have, 
			\begin{align}
			\E_{\mathbf{1}, a,b}[T_{ij}] = (1+ A)^3 \frac{a}{n} + \Big( \frac{1- (1+A)^2 \frac{a}{n}}{ 1- (1+A) \frac{a}{n}}  \Big) \Big(\frac{1- (1+A) \frac{a}{n} }{1- \frac{a}{n}} \Big) \Big(1 - \frac{a}{n} \Big)
			= 1 + \frac{\frac{a}{n} A^2 }{ 1- \frac{a}{n}}(2 + A). \nonumber 
			\end{align}
			There are $2 Z_\ell \Big( \frac{s}{k} - Z_\ell \Big)$ many terms with this contribution. 
			
			\item For all other $(i,j)$ pairs, it is easy to check that $\E_{\mathbf{1}, a, b}[T_{ij}] =1$.

		\end{enumerate}
		
		We note that under the thesis of the Theorem, $A \to 0$ as $n \to \infty$. Thus we have the upper bound 
		\begin{align}
		&\E_{\mathbf{1}, a,b} [ T_1 ] \leq \Big( 1 + C \frac{\frac{a}{n} A^2 }{1 - \frac{a}{n}} \Big)^ {U_\ell}, \nonumber \\
		&U_\ell = {Z_\ell \choose 2} + 2  {\frac{s}{k} - Z_\ell \choose 2}  + 2 Z_\ell \Big( \frac{s}{k} - Z_\ell \Big) + \Big( \frac{s}{k} - Z_\ell \Big)^2, \nonumber 
		\end{align}
		for some absolute constant $C >0$. Upon simplification, we obtain the bound 
		\begin{align}
		\E_{\mathbf{1}, a, b} [T_1 ] \leq \Big( 1 + C \frac{\frac{a}{n}A^2 }{1 - \frac{a}{n}}  \Big)^{\frac{9}{2k^2} s^2 }. \nonumber
		\end{align}
		Finally, it remains to bound $\{T_{\ell, \ell'}: 1\leq \ell < \ell' \leq k \}$. We follow the same argument, and encounter the following cases. 
		\begin{enumerate}
			\item $i \in S_1 \cap S_2 \cap \tilde{S}_{\ell}$ and $j \in S_1 \cap S_2 \cap \tilde{S}_{\ell'}$. In this case, we have, 
			\begin{align}
			\E_{\mathbf{1},a,b}[T_{ij}] = (1 + A)^4 \frac{b}{n} + \Big( \frac{1 - (1+A)^2 \frac{b}{n} }{1 - \frac{b}{n} } \Big)^2 \Big( 1 - \frac{b}{n} \Big)= 1 + \frac{\frac{b}{n}A^2}{1- \frac{b}{n} } (2+ A)^2 . \nonumber 
			\end{align}
			There are $Z_{\ell} Z_{\ell'} $ terms with this contribution. 
			
			\item $i \in S_1 \cap S_2 \cap \tilde{S}_{\ell}, j \in S_1 \cap S_2^c \cap \tilde{S}_{\ell'}$ or $ i \in S_1 \cap S_2 \cap \tilde{S}_{\ell}, j \in S_1^c \cap S_2 \cap \tilde{S}_{\ell'}$ and the related pairs $i \in S_1 \cap S_2^c \cap \tilde{S}_{\ell} , j \in S_1 \cap S_2 \cap \tilde{S}_{\ell'}$ and $ i \in S_1^c \cap S_2 \cap \tilde{S}_{\ell}, j \in S_1 \cap S_2 \cap \tilde{S}_{\ell'}$. Each pair contributes
			\begin{align}
			\E_{\mathbf{1}, a,b}[T_{ij}]  = (1+ A)^3 \frac{b}{n} + \Big( \frac{1- (1+A)^2 \frac{b}{n}}{ 1- (1+A) \frac{b}{n}}  \Big) \Big(\frac{1- (1+A) \frac{b}{n} }{1- \frac{b}{n}} \Big) \Big(1 - \frac{b}{n} \Big)
			= 1 + \frac{\frac{b}{n} A^2 }{ 1- \frac{b}{n}}(2 + A). \nonumber 
			\end{align}
			There are $2 Z_{\ell} \Big( \frac{s}{k} - Z_{\ell'} \Big) + 2 Z_{\ell'} \Big( \frac{s}{k} - Z_\ell \Big)$ many terms with this contribution. 
			
			\item  $ i \in S_1 \cap S_2^c \cap \tilde{S}_{\ell}, j \in S_1^c \cap S_2 \cap \tilde{S}_{\ell'}$ or $i \in S_1^c \cap S_2 \cap \tilde{S}_{\ell}, j \in S_1 \cap S_2^c \cap \tilde{S}_{\ell'}$. Each term contributes 
			\begin{align}
			\E_{\mathbf{1},a,b} [T_{ij}] = (1+ A)^2 \frac{b}{n} + \Big( 1 - \frac{b}{n} \Big) \Big( \frac{1 - (1+ A) \frac{b}{n}}{1- \frac{b}{n}} \Big)^2 = 1 + \frac{\frac{b}{n} A^2 }{ 1 - \frac{b}{n}}. \nonumber 
			\end{align}
			There are $ 2 \Big( \frac{s}{k} - Z_\ell \Big) \Big( \frac{s}{k} - Z_{\ell'} \Big)$ many terms with this contribution. 
			
			\item Every other pair has $\E_{\mathbf{1},a,b} [T_{ij}]  =1$. 
		\end{enumerate}
		Similar considerations as for $T_{\ell}$ above lead to the upper bound 
		\begin{align}
		&\E_{\mathbf{1},a,b}[T_{\ell, \ell'}] \leq \Big( 1 + C \frac{\frac{b}{n}A^2}{1 - \frac{b}{n}} \Big)^{V_{\ell, \ell'}}, \nonumber \\
		&V_{\ell, \ell'} = Z_\ell Z_{\ell'} +  2 Z_\ell \Big( \frac{s}{k} - Z_{\ell'} \Big) + 2 Z_\ell \Big( \frac{s}{k} - Z_{\ell'} \Big)+ 2 \Big( \frac{s}{k} - Z_{\ell} \Big) \Big( \frac{s}{k} - Z_{\ell'} \Big),  \nonumber 
		\end{align}
		for some absolute constant $C >0$. We note that $Z_\ell , Z_{\ell'} \leq \frac{s}{k}$ and thus $V_{\ell, \ell'} \leq \frac{7s^2}{k^2}$. Finally, this yields the following upper bound on $T_{\ell, \ell'}$. 
		\begin{align}
		\E_{\mathbf{1},a,b}[T_{\ell, \ell'}] \leq \Big( 1 + C \frac{\frac{b}{n}A^2}{1 - \frac{b}{n}} \Big)^{\frac{7s^2}{k^2}}. \nonumber
		\end{align}
		The rest of the proof can be completed following the same argument as in that of Theorem \ref{thm:dense}\ref{thm:dense_lower}
	\end{proof}

	\subsection{Proof of Theorem \ref{thm:sparsesignal_vanilla_upper_gen}}	
	We prove each part of the theorem in separate subsections below.\\	
	
	\textit{Proof of Theorem \ref{thm:sparsesignal_vanilla_upper_gen} \ref{thm:sparsesignal_vanilla_hc_gen}}
	
	Throughout $\C$ denotes the underlying community assignment and all results are uniform in this $\C$.

	By virtue of centering and scaling of individual $HC(\hcpar;t)$ under the null, we have by union bound and Chebyshev's Inequality,
	\be 
	\ &\Pzero^{(\C)}\left(HC(\hcpar)\geq \sqrt{\log{n}}\right)\\ &\leq \sum_t \Pzero^{(\C)} \left(GHC(\hcpar;t) > \sqrt{\log {n}} \right)\leq \frac{\sqrt{10\log{n}}}{\log{n}}\rightarrow 0 \quad \text{as}\ n\rightarrow \infty.
	\ee
	This controls the Type I error of this test. It remains to control the Type II error. We will establish as usual that the non-centrality parameter under the alternative beats the null and the alternative variances of the statistic. We consider alternatives as follows. Let $\Ptheta$ be such that $\theta_i=1+A$ for $i\in S$ and $\theta_i=1$ otherwise, where $A=\sqrt{\frac{C^*\log{n}}{\sigmazero^2}}$ with $2\rho(\beta_1,\beta_2)\geq C^*>\chc(\beta_1,\beta_2,\alpha)$, $|S|=s=n^{1-\alpha}$, $\alpha \in (1/2,1)$. The case of higher signals can be handled by standard monotonicity arguments and are therefore omitted. Also, let
	$S_{\ell}=\tilde{S}_{\ell}\cap S,\ s_{\ell}=|S_{\ell}|$ for $\ell=1,\ldots,k$. Also let 
	
	$$\genconk:=1/\rho(\beta_1,\beta_2).$$
	$$\rho_k(\beta_1,\beta_2)=\left[ \frac{(\beta_1^2 \tau_a (1 - \tau_a) + \beta_2^2 \frac{k}{k-1}\tau_b(1- \tau_b) ) ( \tau_a (1 - \tau_a) + \frac{k}{k-1}\tau_b (1- \tau_{b}) ) }{(\beta_1 \tau_a + \beta_2 \frac{k}{k-1}\tau_b )^2} \right].$$

	The following Lemma studies the behavior of this statistic under this class of alternatives. 
	\begin{lemma}\label{lemma:power_hcnew} Let $t=\sqrt{2r\log{n}}$ with $r=\min\left\{1,2C^*\genconk\right\}$. Then
		\begin{enumerate} 
			\item[(a)] $\Etheta^{(\C)}\left(GHC(\hcpar;t)\right)\gg \sqrt{\log{n}}.$ \label{lemma:power_hcnew_a}
			\item[(b)]  $\left(\Etheta^{(\C)}\left(GHC(\hcpar;t)\right)\right)^2\gg \mathrm{Var}_{\boldsymbol{\theta},a,b}\left(GHC(\hcpar;t)\right).$ \label{lemma:power_hcnew_b}
		\end{enumerate}
	\end{lemma}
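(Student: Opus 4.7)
The argument is a textbook Higher Criticism calculation, with Lemma \ref{lemma:binomial_master} supplying the moderate-deviation tail estimates on the log scale. Fix $t=\sqrt{2r\log n}$ with $r=\min\{1,2C^*\gencon\}$ and set $q:=C^*\gencon/2$ for brevity. A direct mean computation shows that, for $i\in S$, the numerator of $D_i(\hcpar)$ is shifted by $A\mubeta(1+o(1))$ under $\Ptheta$, so the non-centrality $\nu:=A\mubeta/\sigmazero(\hcpar)$ satisfies $\nu=\sqrt{2q\log n}(1+o(1))$; the identity $\nu^{2}/(2\log n)=q$ is precisely where the constant $\rho(\beta_1,\beta_2)$ and $\gencon=1/\rho(\beta_1,\beta_2)$ enter the calculation.

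\textbf{Step 1: per-vertex tail probabilities and mean of $HC$.} Under $\Pzero$, Lemma \ref{lemma:binomial_master}\ref{lemma:binomial_tail_pure} applied to the pure combination $\beta_1X+\beta_2Y-\mubeta$ gives $p_0:=\Pzero(D_i(\hcpar)>t)=n^{-r+o(1)}$. For $i\in S$ under $\Ptheta$, split each of $d_i(1,\mathcal{C})$ and $d_i(2,\mathcal{C})$ into the small contribution from $S\cap\mathcal{C}(i)$ (with success probability $(1+A)^{2}a/n$ or $(1+A)^{2}b/n$) and the bulk contribution from $S^c\cap\mathcal{C}(i)$ (with success probability $(1+A)a/n$ or $(1+A)b/n$). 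Absorbing the deterministic shift $A\mubeta$ into the centering and invoking Lemma \ref{lemma:binomial_master}\ref{lemma:binomial_tail_contam} yields $p_1:=\Ptheta(D_i(\hcpar)>t\mid i\in S)=n^{-(\sqrt{r}-\sqrt{q})_{+}^{2}+o(1)}$, where $(x)_{+}:=\max\{x,0\}$. For $i\notin S$, only the $O(s)$ edges to $S$ carry a perturbed parameter, and the same lemma gives $\Ptheta(D_i(\hcpar)>t)=n^{-r+o(1)}$. Summing, $\Etheta HC(\hcpar;t)=s(p_1-p_0)(1+o(1))=n^{1-\alpha-(\sqrt{r}-\sqrt{q})_{+}^{2}+o(1)}$.

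\textbf{Step 2: variance computations and the exponent.} Under $\Pzero$, $\mathrm{Var}_{\mathbf{1},a,b}(HC(\hcpar;t))=\sum_i p_0(1-p_0)+\sum_{i\neq j}\mathrm{Cov}_{\mathbf{1}}(\I(D_i>t),\I(D_j>t))$. The diagonal equals $n^{1-r+o(1)}$; the covariances are controlled by writing $D_i(\hcpar)=\tilde D_i^{(j)}+c_{ij}Y_{ij}$ with $\tilde D_i^{(j)}$ independent of $Y_{ij}$ and $c_{ij}=O(1/\sigmazero(\hcpar))$, so the single-edge coupling forces them to be of lower order. An analogous bound holds under $\Ptheta$. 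Dividing, $\Etheta GHC(\hcpar;t)=n^{f(r)+o(1)}$ with $f(r)=(1+r)/2-\alpha-(\sqrt{r}-\sqrt{q})_{+}^{2}$, whose maximum over $(0,1]$ is attained at $r=\min\{1,4q\}$; elementary algebra shows $f(r)>0$ is equivalent to $C^{*}>\chc(\beta_1,\beta_2,\alpha)$, verified separately on $\alpha\in(1/2,3/4)$ (where $r=4q<1$) and on $\alpha\geq 3/4$ (where $r=1$), proving (a). For (b), the diagonal of $\mathrm{Var}_{\boldsymbol{\theta},a,b}(HC(\hcpar;t))$ is dominated by $n\Ptheta(D_1>t)=O(n^{1-r}+n^{1-\alpha-(\sqrt{r}-\sqrt{q})_{+}^{2}})$, which is of smaller order than $(\Etheta HC(\hcpar;t))^{2}$ as soon as $f(r)>0$; the same one-edge-decoupling bounds the off-diagonal covariances by $C\,\Ptheta(D_i>t)\Ptheta(D_j>t)/\sigmazero(\hcpar)^{2}$ plus negligible corrections, finishing (b).

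\textbf{Main obstacle.} The delicate step is the sharp log-scale identification of $p_1$. A signal vertex's normalized degree is no longer a sum of two binomials with parameters $a/n,b/n$, but a centered combination of four binomials with shifted means, and one must verify that the contamination from $S\cap\mathcal{C}(i)$ does not disturb the leading exponent. Lemma \ref{lemma:binomial_master}\ref{lemma:binomial_tail_contam} is tailored to exactly this, but its application demands careful bookkeeping of the mean shift $A\mubeta(1+o(1))$ and of the identity $\nu^{2}=C^{*}\gencon\log n$ relating $\mubeta,\sigmazero(\hcpar),\sigmazero$ through $\rho(\beta_1,\beta_2)$. The single-edge covariance bounds are comparatively routine but rely crucially on $\sigmazero(\hcpar)\to\infty$, which is why the standing assumption $b\gg\log n$ matters.
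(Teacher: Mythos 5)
Your overall strategy mirrors the paper's: reduce to log-scale exponents for $\Etheta HC$, $\mathrm{Var}_{\mathbf 1,a,b}HC$ and $\mathrm{Var}_{\mathbf\Theta,a,b}HC$ (the paper's Proposition~\ref{lemma:hcnew_main}), supplied by the moderate-deviation estimates of Lemma~\ref{lemma:binomial_master}, and then check by elementary algebra that the resulting exponent $f(r)=(1+r)/2-\alpha-(\sqrt r-\sqrt q)_+^2$ is strictly positive at $r=\min\{1,4q\}$ precisely when $C^*>\chc(\beta_1,\beta_2,\alpha)$. Your reparametrization $q=C^*\gencon/2$ cleanly recovers the paper's exponent $\tfrac12(\sqrt{2r}-\sqrt{C^*\gencon})^2=(\sqrt r-\sqrt q)^2$, and the optimization of $f$ over $r$ matches the paper's choice of $t$.

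There is, however, a quantitative error in the covariance control that would make Step~2 fail on sparse graphs, which are the case of greatest interest. The single-edge decoupling $D_i=\tilde D_i^{(j)}+c_{ij}Y_{ij}$ is the right idea, but the identity (the paper's Lemma~\ref{lemma_simple_identity}) gives
$\mathrm{Cov}(\mathbf 1(D_i>t),\mathbf 1(D_j>t))=\Pzero(Y_{ij}=1)\bigl(1-\Pzero(Y_{ij}=1)\bigr)\,\Delta_i\Delta_j$
where $\Delta_i=\P(\tilde D_i>t-c_{ij})-\P(\tilde D_i>t)\lesssim n^{-r(1-\varepsilon)}/\sigmazero(\hcpar)$ by the local CLT part of Lemma~\ref{lemma:binomial_master}. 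The factor $\P(Y_{ij}=1)\asymp a/n$ or $b/n$ is essential. Your displayed bound $C\,\Ptheta(D_i>t)\Ptheta(D_j>t)/\sigmazero(\hcpar)^2$ omits it: since $\sigmazero(\hcpar)^2\asymp a$, summing your bound over the $\sim n^2$ off-diagonal pairs gives $\sim n^2 p_0^2/a=n^{2-2r}/a$, which dominates the diagonal $n^{1-r}$ whenever $a\ll n^{1-r}$ --- e.g.\ any $a$ of polylogarithmic size with $\alpha\in(1/2,3/4)$, exactly the regime where the HC exponent $r=4q<1$ is relevant. Restoring the $a/n$ factor yields $\sim n\,a\,p_0^2/\sigmazero(\hcpar)^2\asymp n^{1-2r+o(1)}\ll n^{1-r}$ in all regimes, which is what the paper obtains in the $T_4$ term of Lemma~\ref{lemma:bounds_five_sparse_graphs}. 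A smaller imprecision: the case split ``$\alpha\in(1/2,3/4)$ has $r=4q<1$'' is not automatic, since for $C^*$ large enough one can have $4q\geq 1$ even with $\alpha<3/4$; the argument still closes there because $q\geq 1/4$ and $\alpha<3/4$ together give $f(1)>0$, but this deserves a sentence.
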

	
	The Type II error of the HC statistic may be controlled immediately using Lemma \ref{lemma:power_hcnew}. This is straightforward--- however, we include a proof for the sake of completeness. For any alternative considered above, we have, using Chebychev's inequality and Lemma \ref{lemma:power_hcnew}, 
	\be
	\Ptheta^{(\C)}[HC(\hcpar) > \sqrt{\log n}] &\geq \Ptheta^{(\C)}[GHC(\hcpar;t) \geq \sqrt{\log n}] 
	\\&\geq 1 - \frac{ \mathrm{Var}^{(\C)}_{\boldsymbol{\theta},a,b}\left(GHC(\hcpar;t)\right)}{(\Etheta^{(\C)}\left(GHC(\hcpar;t)\right) - \sqrt{\log n})^2} \to 1
	\ee
	as $n \to \infty$. This completes the proof, modulo that of Lemma \ref{lemma:power_hcnew}. 
	\qed

	\begin{proof}[Proof of Lemma \ref{lemma:power_hcnew}]
		The proof requires a detailed understanding of the mean and variance of the $HC(\hcpar;t)$ statistics. Due to centering, $HC(\hcpar;t)$ has mean $0$ under the null hypothesis. Our next proposition estimates the variances of the $HC(\hcpar;t)$ statistics under the null and the class of alternatives introduced above. We also lower bound the expectation of the $HC(\hcpar;t)$ statistics under the alternative.
		
		\begin{prop}
			\label{lemma:hcnew_main}
			For $t = \sqrt{2 r \log n}$ with $r > \frac{C^*\genconk}{2}$, we have,
			\be
			\lim_{n\to \infty} \frac{\log \mathrm{Var}^{(\C)}_{\mathbf{\Theta}=\boldsymbol{1},a,b}\left(HC(\hcpar;t)\right) }{\log n} &= 1- r, \label{eq:null_varnew} \\ 
			\lim_{n \to \infty} \frac{\log \Etheta^{(\C)}\left(HC(\hcpar;t)\right)}{\log n}  &\geq 1- \alpha -\frac{1}{2} \left(\sqrt{2r}-\sqrt{C^*\genconk}\right)^2 , \label{eq:alt_expnew}\\
			\lim_{n \to \infty}  \frac{ \log \mathrm{Var}^{(\C)}_{\mathbf{\Theta},a,b}\left(HC(\hcpar;t)\right) }{\log n}&= \max\left\{ 1-\alpha -\frac{1}{2} \left(\sqrt{2r}-\sqrt{C^*\genconk}\right)^2, 1- r \right\}\\ \label{eq:alt_varnew}. 
			\ee
		\end{prop}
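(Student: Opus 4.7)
The three estimates rest on a common moderate-deviation skeleton provided by Lemma \ref{lemma:binomial_master}, so the plan is to first nail down the marginal tail probabilities $\Pzero^{(\C)}(D_i>t)$ and $\Ptheta^{(\C)}(D_i>t)$ on the log scale, and then show that the variances are dominated by their diagonal contributions.

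\textbf{Step 1 (marginal tails).} Under $H_0$, $D_i$ is a centered and scaled sum of two independent binomials of the form $\beta_1 X+\beta_2 Y$ with $X,Y$ as in Section \ref{sec:binom}. Applying Lemma \ref{lemma:binomial_master}\ref{lemma:binomial_tail_pure} with $C_n\to\sqrt{2r}$ yields $\Pzero^{(\C)}(D_i>t)=n^{-r+o(1)}$. For the alternative with $i\in S$, write $D_i=D_i^{\text{clean}}+D_i^{\text{contam}}$ where $D_i^{\text{clean}}$ collects the edges from $i$ to $S^c$ and $D_i^{\text{contam}}$ the (at most $s$) edges from $i$ to $S\setminus\{i\}$. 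Under $\Ptheta^{(\C)}$, each clean edge has success probability inflated by the factor $(1+A)$, producing a deterministic mean shift of size $A(\beta_1\muonenull+\beta_2\mutwonull)/\sigmazero(\hcpar)=\sqrt{C^\ast\gencon\log n}(1+o(1))$ after elementary algebra using the definitions of $\rho(\beta_1,\beta_2)$ and $\sigmazero^2$; the contaminated part contributes an $O(sA\cdot a/n)=o(\sqrt{\log n})$ centering correction and a vanishing variance perturbation. Writing $D_i-\E_{\mathbf{\Theta}}[D_i]$ again as a positive linear combination of independent binomials (a ``clean'' one plus a ``contaminated'' one) and invoking Lemma \ref{lemma:binomial_master}\ref{lemma:binomial_tail_contam} gives
\[
\Ptheta^{(\C)}(D_i>t)\ \geq\ n^{-\tfrac{1}{2}(\sqrt{2r}-\sqrt{C^\ast\gencon})^2+o(1)},\qquad i\in S,
\]
while for $i\notin S$ the same argument shows $\Ptheta^{(\C)}(D_i>t)=n^{-r+o(1)}$, since only the small fraction $s/n$ of edges sees the inflated probability.

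\textbf{Step 2 (null variance \eqref{eq:null_varnew}).} Decompose $\Vzero=\sum_i\mathrm{Var}(\mathbf{1}(D_i>t))+\sum_{i\neq j}\mathrm{Cov}(\mathbf{1}(D_i>t),\mathbf{1}(D_j>t))$. The diagonal equals $n\cdot n^{-r}(1-n^{-r+o(1)})=n^{1-r+o(1)}$. For the off-diagonal, observe that $D_i$ and $D_j$ share exactly the single edge $Y_{ij}$; conditioning on $Y_{ij}$ gives
\[
\mathrm{Cov}(\mathbf{1}(D_i>t),\mathbf{1}(D_j>t))=q(1-q)\bigl(p_{i,0}-p_{i,1}\bigr)\bigl(p_{j,0}-p_{j,1}\bigr),
\]
where $q=\P(Y_{ij}=1)=O(a/n)$ and $p_{i,0}-p_{i,1}=\P(t-c_i<D_i^{(-j)}\leq t)$ with $c_i=O(1/\sqrt{a})$. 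Lemma \ref{lemma:binomial_master}\ref{lemma:binomial_equal_pure} bounds this last probability by $c_i\cdot n^{-r+o(1)}/\sqrt{\log n}$, so each covariance is $O((a/n)\cdot a^{-1}\cdot n^{-2r+o(1)})$ and the sum over $O(n^2)$ pairs is $n^{1-2r+o(1)}$, swamped by the diagonal.

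\textbf{Step 3 (alternative mean \eqref{eq:alt_expnew} and variance \eqref{eq:alt_varnew}).} Since $HC(\hcpar;t)$ is centered under $H_0$, $\Etheta^{(\C)}HC(\hcpar;t)=\sum_i[\Ptheta^{(\C)}(D_i>t)-\Pzero^{(\C)}(D_i>t)]$. The $s$ summands with $i\in S$ each contribute at least $n^{-\tfrac{1}{2}(\sqrt{2r}-\sqrt{C^\ast\gencon})^2+o(1)}$ from Step 1, while the $n-s$ summands with $i\notin S$ yield a net contribution of order at most $n^{1-r+o(1)}\cdot(s/n)$ (each tail probability is perturbed by a factor $1+O(sA^2\cdot a/n)$), which is lower order. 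This gives \eqref{eq:alt_expnew}. For the alternative variance, the diagonal is $\sum_{i\in S}\Ptheta^{(\C)}(D_i>t)+\sum_{i\notin S}\Ptheta^{(\C)}(D_i>t)=n^{1-\alpha-\tfrac{1}{2}(\sqrt{2r}-\sqrt{C^\ast\gencon})^2+o(1)}+n^{1-r+o(1)}$, whose logarithm on scale $\log n$ is the claimed maximum. The off-diagonal analysis from Step 2 goes through verbatim under $\Ptheta^{(\C)}$, with $q$ replaced by $(1+A)^2 q$ (still $O(a/n)$) and the same moderate-deviation increment estimate applied to the shifted tails; the resulting bound is again of strictly smaller polynomial order.

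The main technical obstacle is the off-diagonal covariance control in Steps 2 and 4: the log-scale moderate-deviation statement alone is too crude, and one must use the increment estimate from Lemma \ref{lemma:binomial_master}\ref{lemma:binomial_equal_pure} together with the conditional independence of $D_i^{(-j)}$, $D_j^{(-i)}$, $Y_{ij}$ to extract the crucial extra factor $c_i c_j=O(1/a)$ that brings the off-diagonal contribution below the diagonal.
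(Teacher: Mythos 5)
Your proposal is correct and follows essentially the same route as the paper: the marginal tail rates from Lemma \ref{lemma:binomial_master} give the diagonal contributions, and the off-diagonal covariances are controlled by conditioning on the single shared edge $Y_{ij}$ (the paper's Lemma \ref{lemma_simple_identity}) together with the local-CLT increment bound of Lemma \ref{lemma:binomial_master} Part \ref{lemma:binomial_equal_pure}. The paper simply organizes the variance decomposition explicitly into five terms $T_1,\dots,T_5$ according to signal/non-signal and within/across-community pairs, but the underlying argument and the resulting exponents are the same as yours.
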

		
		We defer the proof of Proposition \ref{lemma:hcnew_main} to Appendix \ref{section:technical_lemmas}. The rest of the proof follows along the lines of the proof of Lemma 6.4 in \cite{mms2016} by noting that $C^*/8(1-\theta)$ in Proposition 6.4 of \cite{mms2016} can be mapped to the constant $C^*\genconk$ in Lemma \ref{lemma:power_hcnew} and Proposition \ref{lemma:hcnew_main}.
	\end{proof}

	\textit{Proof of Theorem \ref{thm:sparsesignal_vanilla_upper_gen} \ref{thm:sparsesignal_vanilla_max_upper_gen}} 
	
	Throughout $\C$ denotes the underlying community assignment and all results are uniform in this $\C$.  For $1\leq l \leq k$, we set $\muzero(\tilde{S}_l,\beta_1,\beta_2)=\beta_1\muonenull (\tilde{S}_l)+\beta_2\mutwonull (\tilde{S}_l)$ and recall the definition of $\sigmazero(\tilde{S}_l,\beta_1,\beta_2)$ from Section \ref{section:tests}.
	First, we control the Type I error of $\phi(\beta_1, \beta_2 ,\delta)$ for any $\delta >0$. Indeed, we have, using Lemma \ref{lemma:binomial_master} Part \ref{lemma:binomial_tail_pure} and an union bound, 
	\be
	\E^{(\C)}_{\mathbf{1},a,b}[T_{d_{\max}}(\C,\beta_1,\beta_2,\delta)] \leq n\cdot n^{- (1 + \delta) + o (1)  } \to 0. \nonumber
	\ee
	Using stochastic monotonicity of the test statistic in $A$, it suffices to analyze the Type II error for $A = \sqrt{\frac{C\log n}{\sigmazero^2}}$ with
	\begin{align*}
	\cmax(\beta_1,\beta_2,\alpha)<C \leq 2 \rho_k(\beta_1, \beta_2).
	\end{align*}
	
	Now, we note that 
	\be
	\ & \P^{(\C)}_{\mathbf{\Theta}, a,b} \Big[ \max_{1\leq l \leq k} \max_{i \in \tilde{S}_l} D_i (\tilde{S}_{l},\beta_1 , \beta_2) > \sqrt{2 (1 + \delta) \log n} \Big] \\
	&\geq \P^{(\C)}_{\mathbf{\Theta},a,b} \Big[ \max_{1\leq l \leq k} \max_{ i \in S(\mathbf{\Theta}) \cap \tilde{S}_l} D_i (\tilde{S}_l,\beta_1 ,\beta_2) > \sqrt{2 (1 + \delta) \log n} \Big]. 
	\ee
	Further, for $i \in S(\mathbf{\Theta}) \cap \tilde{S}_l$, we set $d_i'(1, \tilde{S}_l) = \sum_{j \in \mathcal{C}(i) \cap S(\mathbf{\Theta})^c} Y_{ij}$ and $d_i'(2, \tilde{S}_l) = \sum_{j \in \mathcal{C}(i)^c \cap S(\mathbf{\Theta})^c} Y_{ij}$. We observe that $d_i(1, \tilde{S}_l) \geq d_i'(1, \tilde{S}_l)$ and $d_i(2, \tilde{S}_l) \geq d_i'(2, \tilde{S}_l)$, and thus 
	\be
	\ &\P^{(\C)}_{\mathbf{\Theta},a,b} \Big[  \max_{1\leq l \leq k}\max_{ i \in S(\mathbf{\Theta}) \cap \tilde{S}_l} D_i (\tilde{S}_l,\beta_1 ,\beta_2) > \sqrt{2 (1 + \delta) \log n} \Big] \nonumber \\
	&\geq \P^{(\C)}_{\mathbf{\Theta},a,b} \Big[ \max_{1\leq l \leq k} \max_{i \in \tilde{S}_l \cap S(\mathbf{\Theta})} D_i'(\tilde{S}_l, \beta_1, \beta_2)  > \sqrt{2 (1 + \delta) \log n } \Big], \label{eq:max_upper_int1} 
	\ee
	where we set, for $i \in \tilde{S}_l$, 
	\begin{align}
	D_i'(\tilde{S}_l, \beta_1, \beta_2) = \frac{\beta_1 d_i'(1, \tilde{S}_l) + \beta_2 d_i'(2, \tilde{S}_l) - \ \muzero(\tilde{S}_l,\beta_1,\beta_2) }{\sigmazero(\tilde{S}_l,\beta_1,\beta_2)}. \nonumber
	\end{align}
	Finally, we note that for $j \in \mathcal{C}(i)$, $\E^{(\C)}_{\mathbf{1},a,b}[Y_{ij}] = \frac{a}{n}$, and for $j \in \mathcal{C}(i) \cap S(\mathbf{\Theta})^c$, $\E^{(\C)}_{\mathbf{\Theta},a,b}[Y_{ij}] \geq (1+ A) \frac{a}{n}$. Similarly, for $j \in \mathcal{C}(i)^c$, $\E^{(\C)}_{\mathbf{1},a,b}[Y_{ij}] = \frac{b}{n}$, while for $j \in \mathcal{C}(i)^c \cap S(\mathbf{\Theta})^c$, $\E^{(\C)}_{\mathbf{\Theta},a,b}[Y_{ij}] \geq (1 + A) \frac{b}{n}$. Plugging these into \eqref{eq:max_upper_int1} and simplifying, we get 
	\begin{align}
	&\P^{(\C)}_{\mathbf{\Theta},a,b} \Big[ \max_{1\leq l \leq k} \max_{i \in S(\mathbf{\Theta}) \cap \tilde{S}_l} D_i'(\tilde{S}_l, \beta_1, \beta_2) >\sqrt{2 (1 + \delta) \log n } \Big] \geq \nonumber \\
	&\P^{(\C)}_{\mathbf{\Theta},a,b} \Big[ \cup_{l=1}^{k} \Big\{  \max_{i \in S(\mathbf{\Theta}) \cap \tilde{S}_l} \frac{\beta_1 d_i'(1, \tilde{S}_l) + \beta_2 d_i'(2, \tilde{S}_l) - \E^{(\C)}_{\mathbf{\Theta},a,b}[\beta_1 d_i'(1, \tilde{S}_l) + \beta_2 d_i'(2, \tilde{S}_l)] }{\sqrt{\var_{\mathbf{\Theta},a,b}[\beta_1 d_i'(1) + \beta_2 d_i'(2)]  }} >  C'_l \sqrt{\log n}  \Big\}\Big],\nonumber
	\end{align}
	where $C'_l$ is given as 
	\begin{align}
	C'_l= \sqrt{2 (1 + \delta)} - \frac{\beta_1 \lambda_l \tau_a + \beta_2 (1- \lambda_l) \tau_b }{ \sqrt{( \beta_1^2  \lambda_l \tau_a(1 - \tau_a) + \beta_2^2 (1- \lambda_l) \tau_b (1 - \tau_b)  )(\frac{1}{k} \tau_a (1- \tau_a) +  \frac{k-1}{k}\tau_b(1 -\tau_b)  )}} \sqrt{C}, \nonumber 
	\end{align}
	and we set $|\tilde{S}_l| = \lambda_l n$, $1 \leq l \leq k$.  
	We note that under our assumptions, $C'_l>0$. We note that $\{ D_i'(\mathcal{C}(i), \beta_1, \beta_2): i \in S(\mathbf{\Theta})\}$ are independent and for any fixed $i \in S(\mathbf{\Theta}) \cap \tilde{S}_l$, $d_i'(1, \tilde{S}_l)$ is stochastically larger than a $X_l \sim {\rm{Bin}}\Big( \lambda_l n-s, \frac{a}{n} \Big)$, while $d_i'(2, \tilde{S}_l)$ stochastically dominates $Y_l \sim {\rm{Bin}}\Big(n (1- \lambda_l) -s, \frac{b}{n} \Big)$. Thus we have,  
	\begin{align}
	&\P^{(\C)}_{\mathbf{\Theta},a,b} \Big[ \cap_{1\leq l \leq k} \Big\{  \max_{i \in S(\mathbf{\Theta}) \cap \tilde{S}_l} \frac{\beta_1 d_i'(1) + \beta_2 d_i'(2) - \E^{(\C)}_{\mathbf{\Theta},a,b}[\beta_1 d_i'(1) + \beta_2 d_i'(2)] }{\sqrt{\var_{\mathbf{\Theta},a,b}[\beta_1 d_i'(1) + \beta_2 d_i'(2)]  }} \leq  C'_l \sqrt{\log n} \Big\} \Big] \nonumber\\
	& \leq \prod_{l=1}^{k}\Big( \P\Big[ \frac{\beta_1 X_l + \beta_2 Y_l - \E[\beta_1 X_l + \beta_2 Y_l ]}{\sqrt{\var(\beta_1 X_l + \beta_2 Y_l)}} \leq C'_l \sqrt{\log n} \Big]  \Big)^{|S(\mathbf{\Theta}) \cap \tilde{S}_l|} \nonumber\\
	&=  \prod_{l=1}^{k} \Big(1- \P\Big[ \frac{\beta_1 X_l + \beta_2 Y_l - \E[\beta_1 X_l + \beta_2 Y_l ]}{\sqrt{\var(\beta_1 X_l + \beta_2 Y_l)}} > C'_l \sqrt{\log n} \Big]  \Big)^{|S(\mathbf{\Theta}) \cap \tilde{S}_l|} \nonumber\\
	&\leq \exp\Big[ -\sum_{l=1}^{k} |S(\mathbf{\Theta}) \cap \tilde{S}_l| \P\Big[\frac{\beta_1 X_l + \beta_2 Y_l - \E[\beta_1 X_l + \beta_2 Y_l ]}{\sqrt{\var(\beta_1 X_l + \beta_2 Y_l)}} > C'_l \sqrt{\log n}  \Big] \Big] \to 0\nonumber
	\end{align}
	as $n \to \infty$, by separately analyzing the cases $\tau_a = \tau_b =0$ and $\tau_a, \tau_b >0$, $\nu =1$. This controls the Type II error and completes the proof.\qed
	
	\subsection{Proof of Theorem \ref{thm:sparsesignal_vanilla_upper}} The proof of Theorem \ref{thm:sparsesignal_vanilla_upper}\ref{thm:sparsesignal_vanilla_hc} and \ref{thm:sparsesignal_vanilla_max_upper} follows from Theorem \ref{thm:sparsesignal_vanilla_upper_gen}\ref{thm:sparsesignal_vanilla_hc_gen} and \ref{thm:sparsesignal_vanilla_max_upper_gen}. Hence here we only prove Theorem \ref{thm:sparsesignal_vanilla_upper} \ref{thm:sparsesignal_vanilla_max_lower}.\\
	
	\textit{Proof of Theorem \ref{thm:sparsesignal_vanilla_upper} \ref{thm:sparsesignal_vanilla_max_lower}} 	In this theorem, since all computations are under the true underlying $\C$ and the test based $d_{\max}(1,1)$ does not depend on it, we drop the notational dependence on $\C$ from $\Ptheta^{(\C)}$, $\Etheta^{(\C)}$, and $\mathrm{Var}^{(\C)}_{\mathbf{\Theta},a,b}$.
	
	Consider any alternative $\mathbf{\Theta} \in \Xi(s,A)$ with $A$ as in equation \eqref{eq:alt1} of the statement of the theorem, such that $\theta_i = 1+A$ for $i \in S(\mathbf{\Theta})$ and $\theta_i = 1$ otherwise. Recall that we have set $\muzero = \E_{\mathbf{1},a,b}[d_1]$ and $\sigmazero^2 = \Var_{\mathbf{1},a,b}[d_1]$.  If possible, suppose there exists a consistent sequence of tests based on the max degree with asymptotically zero risk again the alternative sequence under consideration. In this case, there exists a sequence of cut-offs $\{k_n\}$ such that 
	\begin{align}
	\P_{\mathbf{1},a,b}[\max_i d_i < k_n ] \to 1,\,\,\,\, \P_{\mathbf{\Theta},a,b}[\max_i d_i > k_n ] \to 1, \nonumber 
	\end{align}
	as $n \to \infty$. Without loss of generality, we set 
	\begin{align}
	k_n = \muzero + \sigmazero\sqrt{2 \log n}  \Big( 1 - \frac{\log \log n  + \log (4 \pi)}{4 \log n} + \frac{y_n}{2 \log n} \Big). \nonumber 
	\end{align}
	We first observe that for any such sequence $\{k_n\}$, $\P_{\mathbf{1},a,b}[\max_i d_i < k_n] \to 1 $ as $n \to \infty$ implies that $y_n \to \infty$ as $n \to \infty$. To this note, suppose $y_n \leq M$ along any subsequence. Thus along this subsequence, using Theorem \ref{thm:max_deg_null}, we have $\P_{\mathbf{1},a,b}[\max_i d_i < k_n ] \leq \exp[ \textrm{e}^{-M} ] <1$. Thus $y_n \to \infty$ as $n \to \infty$. The rest of the proof establishes that the Type II error does not converge to $0$ as $n \to \infty$ for any such sequence of cutoffs $k_n$, and alternatives $\mathbf{\Theta}$ outlined above. To this end, note that 
	\begin{align}
	\P_{\mathbf{\Theta},a,b} \Big[ \max_i d_i > k_n \Big] &=\P_{\mathbf{\Theta},a,b} \Big[ \max_{i \in S(\mathbf{\Theta})} d_i > k_n \Big] + \P_{\mathbf{\Theta},a,b} \Big[\max_{i \in S(\mathbf{\Theta})^c } d_i > k_n \Big] \nonumber \\
	&\leq n^{1- \alpha} \P_{\mathbf{\Theta},a,b} \Big[ d_{i_0} > k_n \Big] +  \P_{\mathbf{\Theta},a,b} \Big[\max_{i \in S(\mathbf{\Theta})^c } d_i > k_n \Big] \nonumber, 
	\end{align}
	for some $i_0 \in S(\mathbf{\Theta})$. We will establish that each of these terms converge to zero as $n \to \infty$. To this end, first, we note that $y_n \to \infty$ implies that $y_n \geq 0$ eventually. Thus we have,
	\begin{align}
	\P_{\mathbf{\Theta},a,b}[ d_{i_0} > k_n ] &\leq \P_{\mathbf{\Theta},a,b} \Big[ d_{i_0} > \muzero+ \sigmazero \sqrt{2\log n} \Big( 1 - \frac{\log \log n + \log (4\pi)}{4\log n} \Big) \Big] \nonumber \\
	&= \P_{\mathbf{\Theta},a,b} \Big[d_{i_0} > \muzero + \sigmazero \sqrt{2 \log n} ( 1 + o(1) ) \Big]. \nonumber 
	\end{align}
	We note that under $\P_{\mathbf{\Theta},a,b}[\cdot]$, $d_{i_0} \sim U + V$, with $U,V$ independent random variables and $U \sim \textrm{Bin}\Big( \frac{n}{k} , (1 + A) \frac{a}{n} \Big) $ and $V \sim \textrm{Bin}\Big(\frac{n(k-1)}{k}, ( 1 + A) \frac{b}{n} \Big)$ respectively. Thus $\E_{\mathbf{\Theta},a,b} [d_{i_0}] = \frac{n}{k} (1+ A) \Big( \frac{a}{n} + (k-1)\frac{b}{n} \Big)$ and $\Var_{\mathbf{\Theta},a,b}[d_{i_0}] = \frac{n}{k} (1+A) \Big( \frac{a}{n} (1 - (1+A) \frac{a}{n} ) + (k-1)\frac{b}{n} ( 1- (1 +A) \frac{b}{n} ) \Big)$. Thus $\E_{\mathbf{\Theta},a,b} [d_{i_0} ] - \muzero = \frac{n}{k} A \Big( \frac{a}{n} + (k-1) \frac{b}{n} \Big)$. Further, we observe that $A \to 0$ as $n \to \infty$ to conclude that $\var_{\mathbf{\Theta},a,b}[d_{i_0}] / \var_{\mathbf{1},a,b}[d_{i_0}] \to 1$ as $n \to \infty$. Thus we have,
	\begin{align}
	& \P_{\mathbf{\Theta},a,b} \Big[d_{i_0} > \muzero + \sigmazero \sqrt{2 \log n} ( 1 + o(1) ) \Big] \nonumber \\
	& = \P_{\mathbf{\Theta},a,b}\Big[ \frac{d_{i_0} - \E_{\mathbf{\Theta},a,b}[d_{i_0}] }{\sqrt{\var_{\mathbf{\Theta},a,b}[d_{i_0}]} } > \sqrt{2 \log n} (1 + o (1)) - \sqrt{C \log n} \frac{\frac{n}{k} \Big( \frac{a}{n} +(k-1) \frac{b}{n} \Big)}{\sigma_n^2} \Big]. \nonumber \\
	&= \P_{\mathbf{\Theta},a,b}\Big[ \frac{d_{i_0} - \E_{\mathbf{\Theta},a,b}[d_{i_0}] }{\sqrt{\var_{\mathbf{\Theta},a,b}[d_{i_0}]} } > \sqrt{2 \log n} \Big( 1 - \frac{\tau_a + (k-1) \tau_b }{\tau_a (1 - \tau_a) + (k-1) \tau_b (1 - \tau_b) } \sqrt{\frac{C}{2}} \Big) (1 + o(1)) \Big] \nonumber \\
	&= n^{ - \Big( 1 - \frac{\tau_a + (k-1) \tau_b }{ \tau_a (1 - \tau_a) + (k-1) \tau_b (1 - \tau_b) } \sqrt{\frac{C}{2}} \Big)^2 + o(1) }, \nonumber 
	\end{align}
	using Lemma \ref{lemma:binomial_master} Part \ref{lemma:binomial_tail}. As a result, we have, 
	\begin{align}
	n^{1- \alpha} \P_{\mathbf{\Theta},a,b} [d_{i_0} > k_n ] = o(1) , \nonumber 
	\end{align}
	as $ 1 - \alpha - \Big(1 - \frac{\tau_a + \tau_b }{ \tau_a (1 - \tau_a) + \tau_b (1 - \tau_b) } \sqrt{\frac{C}{2}}   \Big)^2 <0 $, in this case. This controls the first term. 
	
	The control of the second term is similar to the control of the Type I error. However, we have to carefully control the contribution due to the contamination edges with the non-null vertices. To this end, note that for $i \in S(\mathbf{\Theta})^c$, $d_i = Z_{1i} + Z_{2i}$, where $Z_{1i} = \sum_{j \neq i , j \in S(\mathbf{\Theta})^c} Y_{ij}$, $Z_{2i} = \sum_{j \neq i, j \in S(\mathbf{\Theta})} Y_{ij}$. Thus we have,
	\begin{align}
	\P_{\mathbf{\Theta},a,b} \Big[ \max_{i \in S(\mathbf{\Theta})^c} d_i > k_n \Big] &\leq \P_{\mathbf{\Theta},a,b} \Big[\max_{i \in S(\mathbf{\Theta})^c} Z_{1i}  + \max_{i \in S(\mathbf{\Theta})^c } Z_{2i} > k_n \Big] \nonumber \\
	&\leq \P_{\mathbf{\Theta},a,b} \Big[ \max_{i \in S(\mathbf{\Theta})^c } Z_{1i} > k_n - k_n'  \Big] + \P_{\mathbf{\Theta},a,b} \Big[ \max_{i \in S(\mathbf{\Theta})^c} Z_{2i} > k_n' \Big], \nonumber 
	\end{align}
	for some sequence $k_n'$ to be chosen appropriately. For each $i \in S(\mathbf{\Theta})^c$, we note that $Z_{2i}$ is stochastically dominated by a $\textrm{Bin}\Big(s, \frac{a}{n}(1 + A) \Big)$ random variable. We choose $k_n' = \frac{\sigma_n \zeta_n'}{\sqrt{2 \log n}}$, for some sequence $\zeta_n' \to \infty$ to be chosen appropriately. We note that $\alpha \in (\frac{1}{2},1)$ implies that $k_n' \gg s \frac{a}{n} (1 + A)$ and thus, by Bernstein's inequality, for $i \in S(\mathbf{\Theta})^c$, 
	\begin{align}
	\P_{\mathbf{\Theta},a,b}[Z_{2i} > k_n' ] &\leq \exp\Big[ - \frac{\Big(k_n' - s\frac{a}{n}(1+A) \Big)^2}{2\Big[ s \frac{a}{n} (1+ A) \Big( 1 - \frac{a}{n}(1+A) \Big) + \frac{1}{3} \Big(k_n'- s \frac{a}{n} (1+A) \Big) \Big]} \Big]
	\leq \exp\Big[-C k_n' \Big] \nonumber 
	\end{align}
	for some $C >0$. 
	Further, $a,b \gg (\log n)^3$ implies that $k_n' \gg \log n$ and thus 
	\begin{align}
	\P_{\mathbf{\Theta},a,b}\Big[ \max_{i \in S(\mathbf{\Theta})^c} Z_{2i} > k_n' \Big] \leq n \exp[ - C k_n' ] = o(1). \nonumber 
	\end{align}
	Finally, we have, for $i \in S(\mathbf{\Theta})^c$, $Z_{1i}$ is stochastically dominated by $M_1 + M_2$, where $M_1, M_2$ are independent random variables with $M_1 \sim \textrm{Bin}\Big( \frac{n}{k}, \frac{a}{n} \Big) $ and $M_2 \sim \textrm{Bin} \Big( \frac{n (k-1)}{k} , \frac{b}{n} \Big)$. This implies 
	\begin{align}
	& \P_{\mathbf{\Theta},a,b } \Big[ \max_{i \in S(\mathbf{\Theta})^c } Z_{1i} > k_n - k_n' \Big] \leq \P_{\mathbf{1},a,b} \Big[ \max_i d_i > k_n - k_n' \Big] \nonumber \\
	&= \P_{\mathbf{1},a,b}\Big[ \max_i d_i > \muzero + \sigmazero \sqrt{2 \log n} \Big( 1 - \frac{\log \log n + \log (4 \pi) }{4 \log n } + \frac{y_n - \zeta_n'}{2 \log n}  \Big) \Big]. \nonumber 
	\end{align}
	We note that for any sequence $y_n \to \infty$, we can choose a sequence $\zeta_n' \to \infty$ sufficiently slow such that $y_n - \zeta_n' \to \infty$. Under such a choice of $\zeta_n'$, $\P_{\mathbf{\Theta},a,b } \Big[ \max_{i \in S(\mathbf{\Theta})^c } Z_{1i} > k_n - k_n' \Big] = o(1) $ as $n \to \infty$. This establishes that no such test can control the Type I and Type II errors simultaneously, and thus completes the proof. \qed

	\subsection{Proof of Theorem \ref{thm:max_deg_null}}In this theorem, since all computations are under the true underlying $\C$ and the test based $d_{\max}(1,1)$ does not depend on it, we drop the notational dependence on $\C$ from $\Ptheta^{(\C)}$, $\Etheta^{(\C)}$, and $\mathrm{Var}^{(\C)}_{\mathbf{\Theta},a,b}$.

	For $y \in \mathbb{R}$, we define $x : = x(n,y)$ as the solution of the equation $\frac{1}{\sqrt{2 \pi}} \frac{n}{x} \exp[ - x^2 /2] = \exp[-y]$. We will establish that 
	\begin{align}
	\P_{\mathbf{1},a,b}\Big[   \frac{\max_i d_i - \muzero }{\sigmazero}  \leq x\Big] \to \exp[ \textrm{e}^{-y} ] \label{eq:max_toprove}
	\end{align}
	as $n \to \infty$, where  as usual we set $\muzero = \E_{\mathbf{1},a,b}[d_1]$ and $\sigmazero^2 = \Var_{\mathbf{1},a,b}[d_1]$. Upon direct computation, we obtain
	\begin{align}
	x(n,y) = \sqrt{2 \log n} \Big( 1 - \frac{\log \log n + \log (4 \pi)}{4 \log n} + \frac{y}{2 \log n} \Big) + o(1) \nonumber 
	\end{align}
	as $n \to \infty$, thus immediately implying the desired result. Thus it remains to establish \eqref{eq:max_toprove}. To this end, we define $Z= \sum_{i} \mathbf{1}(d_i > \muzero + x \sigmazero)$. We claim that as $n \to \infty$, $Z$ converges in distribution to a Poisson($\exp[-y])$ random variable. This immediately implies 
	\begin{align}
	\P_{\mathbf{1},a,b}\Big[ \frac{\max_i d_i - \muzero }{\sigmazero} \leq x\Big] = \P_{\mathbf{1},a,b}[Z=0] \to \exp[\textrm{e}^{-y}] \nonumber 
	\end{align} 
	as $n \to \infty$. This yields \eqref{eq:max_toprove}. 
	
	Finally, it remains to establish the Poisson approximation for $Z$ as $n \to \infty$. To this end, we use the following version of Stein's method for Poisson approximation \citep[Theorem 2.C and Corollary 2.C.4]{barbour1992poisson}. We define a sequence of Bernoulli random variables $\{X_i : i \in I\}$ to be \textit{positively related} if for every $i \in I$, we can construct $\{Y_j^{(i)}: j \neq i \}$, coupled with $\{X_i : i \in I\}$ such that 
	$\{Y_j^{(i)} : j \neq i \}$ is distributed as $\{X_j : j \neq i \} | X_i =1$ and $ \forall j \neq i$, $ Y_{j}^{(i)} \geq X_j$. We set $W= \sum_i X_i$, with $X_i \sim \textrm{Ber}(p_i)$, and $\lambda = \sum_i p_i$. The following theorem \citep[Corollary 2.C.4]{barbour1992poisson} bounds the TV distance between $W$ and a Poisson random variable with mean $\lambda$. 
	\begin{theorem*}\citep[Corollary 2.C.4]{barbour1992poisson}\\
		$$d_{\textrm{TV}}(W, \textrm{Poi}(\lambda) ) \leq \min\Big\{ 1, \frac{1}{\lambda} \Big\} \Big( \Var(W)  -\lambda + 2 \sum_i p_i^2\Big).$$
	\end{theorem*}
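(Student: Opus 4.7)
The plan is to invoke the Stein--Chen framework for Poisson approximation. For any $A \subseteq \mathbb{Z}_{\geq 0}$, let $g_A$ solve the Stein equation $\lambda g_A(k+1) - k g_A(k) = \mathbf{1}_A(k) - \mathrm{Poi}(\lambda)(A)$, so that
\[
d_{\mathrm{TV}}(W, \mathrm{Poi}(\lambda)) = \sup_A \bigl|\E[\lambda g_A(W+1) - W g_A(W)]\bigr|.
\]
I would quote as a black box the classical Stein factor bound $\|\Delta g_A\|_\infty \leq c := \min(1, 1/\lambda)$, where $\Delta g(k) := g(k+1) - g(k)$, from \cite{barbour1992poisson}. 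The problem then reduces to showing $|\E[\lambda g(W+1) - W g(W)]| \leq c\bigl(\mathrm{Var}(W) - \lambda + 2 \sum_i p_i^2\bigr)$ for every $g$ satisfying $\|\Delta g\|_\infty \leq c$.

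To exploit the Bernoulli structure, I would write $\E[W g(W)] = \sum_i p_i \E[g(W) \mid X_i = 1] = \sum_i p_i \E[g(V_i)]$, where $V_i$ has the law of $W$ conditioned on $X_i = 1$. Using positive relatedness, realize $V_i = 1 + \sum_{j \neq i} Y_j^{(i)}$ on a common probability space carrying the original $X_j$'s, and set $D_i := \sum_{j \neq i} (Y_j^{(i)} - X_j) \geq 0$ almost surely. A short algebraic check yields the key identity $V_i - (W+1) = D_i - X_i$, which cleanly separates a sign-definite quantity from the single indicator $X_i$.

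Splitting on the value of $X_i$ and using the Lipschitz-type inequality $|g(m) - g(n)| \leq c\,|m-n|$ valid for non-negative integers, I obtain $|g(V_i) - g(W+1)| \leq c D_i$ on $\{X_i = 0\}$ and $|g(V_i) - g(W+1)| \leq c(D_i + 1)$ on $\{X_i = 1\}$. Taking expectations and summing gives
\[
\bigl|\E[\lambda g(W+1) - W g(W)]\bigr| = \Bigl|\sum_i p_i \E[g(W+1) - g(V_i)]\Bigr| \leq c \sum_i p_i \bigl(\E[D_i] + p_i\bigr).
\]

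The last ingredient is a short covariance bookkeeping. Because $\E[Y_j^{(i)}] = \P(X_j = 1 \mid X_i = 1)$, we have $p_i\,\E[Y_j^{(i)}] = \E[X_i X_j]$, hence $p_i\,\E[D_i] = \sum_{j \neq i} \mathrm{Cov}(X_i, X_j)$. Summing in $i$ and combining with the standard decomposition $\mathrm{Var}(W) = \lambda - \sum_i p_i^2 + \sum_{i \neq j} \mathrm{Cov}(X_i, X_j)$ yields $\sum_i p_i \E[D_i] = \mathrm{Var}(W) - \lambda + \sum_i p_i^2$; plugging this into the previous display delivers exactly the advertised bound. The main obstacle beyond this algebra is the Stein-factor estimate $\|\Delta g_A\|_\infty \leq \min(1, 1/\lambda)$, whose proof demands a delicate analysis of the explicit solution to the Stein recursion, so I would treat it as an external input from \cite{barbour1992poisson} rather than reconstruct it here.
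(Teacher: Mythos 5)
The paper does not prove this theorem; it is quoted verbatim from \citet[Corollary 2.C.4]{barbour1992poisson} and invoked as an external tool in the proof of Theorem \ref{thm:max_deg_null}. There is therefore no internal argument to compare against, but your reconstruction is correct and is in fact the standard Stein--Chen argument underlying the cited result. The mechanics all check: the Stein factor estimate $\|\Delta g_A\|_\infty \le \min(1,1/\lambda)$ is rightly taken as input; the size-biased rewriting $\E[Wg(W)]=\sum_i p_i\,\E[g(V_i)]$ is exact; positive relatedness supplies the coupling with $D_i=\sum_{j\neq i}(Y_j^{(i)}-X_j)\ge 0$ a.s. and the identity $V_i-(W+1)=D_i-X_i$; the case split on $X_i$ then gives $|g(V_i)-g(W+1)|\le c(D_i+X_i)$, and the covariance bookkeeping $p_i\,\E[D_i]=\sum_{j\ne i}\mathrm{Cov}(X_i,X_j)$ together with $\Var W=\lambda-\sum_i p_i^2+\sum_{i\ne j}\mathrm{Cov}(X_i,X_j)$ yields $\sum_i p_i\E[D_i]=\Var W-\lambda+\sum_i p_i^2$. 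The second $\sum_i p_i^2$ arises from the $+1$ on $\{X_i=1\}$, giving exactly the factor $2\sum_i p_i^2$ in the bound. One minor point worth making explicit: the sign control $D_i\ge 0$ is where positive relatedness actually enters, and it is what lets you replace $|D_i-X_i|$ by $D_i+X_i$ and keep $\E[D_i]\ge 0$; without it the bound would require $\E|D_i|$, which does not reduce to the covariance sum.
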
 
	
	The desired Poisson approximation result follows immediately from the lemma below. 
	\begin{lemma}
		\label{lem:suff_poisson}
		The Bernoulli variables $\{\mathbf{1}(d_i > \muzero + x \sigmazero ): 1 \leq i \leq n \}$ are \textit{positively related}. We set $\lambda =\lambda_n := n \P_{\mathbf{1},a,b}[d_1 > \muzero + x \sigmazero]$. Then we have, if $b \gg (\log n)^3$, as $n \to \infty$, $\lambda \to \exp[-y]$ and $\Var(Z) \to \exp[-y]$
	\end{lemma}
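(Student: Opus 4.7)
The plan is to address the three claims in turn. For positive relation, I will construct, for each fixed $i$, a monotone coupling of the full edge vector to a ``boosted'' version in which vertex $i$ has degree exceeding $c := \muzero + x \sigmazero$. Under $H_0$ the edges are mutually independent Bernoullis, and conditioning on the increasing event $\{d_i > c\}$ stochastically increases the joint distribution of $\{Y_{ik}\}_{k \neq i}$ in the coordinate-wise partial order, by the FKG inequality applied to independent Bernoullis and a monotone event. By Strassen's theorem this conditioned law can be realized on the same probability space as the unconditioned law with $\tilde Y_{ik} \geq Y_{ik}$ almost surely; leaving all edges not incident to $i$ unchanged produces a graph whose joint law is exactly that of the original graph conditioned on $X_i = 1$. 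Since $d_j^{(i)} = \tilde Y_{ij} + \sum_{k \neq i,j} Y_{jk} \geq d_j$ for every $j \neq i$, the new indicators $Y_j^{(i)} := \mathbf{1}(d_j^{(i)} > c)$ dominate $X_j$ as required.

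For the Poisson parameter, $d_1 = \mathrm{Bin}(n/2 - 1, a/n) + \mathrm{Bin}(n/2, b/n)$ is a sum of two independent Binomials to which Lemma~\ref{lemma:binomial_tail_exp_scale} applies (with $\beta_1 = \beta_2 = 1$ and $x = \sqrt{2\log n}(1+o(1))$), yielding $\P_{\mathbf{1},a,b}(d_1 > c) \sim 1 - \Phi(x)$. The defining equation $\frac{n}{\sqrt{2\pi}\,x} e^{-x^2/2} = e^{-y}$ combined with the Mills ratio $1 - \Phi(x) \sim \frac{1}{\sqrt{2\pi}\,x} e^{-x^2/2}$ then immediately yields $\lambda_n = n\,\P_{\mathbf{1},a,b}(d_1 > c) \to e^{-y}$.

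For the variance, I decompose $\mathrm{Var}(Z) = n\,p_n(1 - p_n) + \sum_{i \neq j} \mathrm{Cov}(X_i, X_j)$; the diagonal term tends to $e^{-y}$ by the previous step, since $p_n \to 0$. For each pair $i \neq j$, conditioning on $Y := Y_{ij}$ and setting $\tilde d_i = d_i - Y$, $\tilde d_j = d_j - Y$ (mutually independent, identically distributed by symmetry in $i, j$, and independent of $Y$ under $H_0$), an elementary expansion gives
\[
\mathrm{Cov}(X_i, X_j) = q(1-q)(v-u)^2, \qquad q = \P(Y = 1),\; u = \P(\tilde d_i > c),\; v = \P(\tilde d_i > c - 1),
\]
so $v - u = \P(\tilde d_i = c)$. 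Since $q \leq \max(a,b)/n = O(1)$, the problem reduces to showing $n^2\,\P(\tilde d_i = c)^2 \to 0$.

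To bound $\P(\tilde d_i = c)$, I will apply Lemma~\ref{lemma:binomial_tail_exp_scale} to $\tilde d_i$ (still a sum of two independent Binomials with parameters in the allowed range) at the two consecutive integer thresholds $c$ and $c+1$, obtaining $\P(\tilde d_i = c) \sim (1 - \Phi(x)) - (1 - \Phi(x + 1/\sigmazero))$. The assumption $b \gg (\log n)^3$ guarantees $\sigmazero^2 \gg \log n$, so that $1/\sigmazero \ll 1/x$, and a Taylor expansion of the Mills ratio yields $\P(\tilde d_i = c) \sim (x/\sigmazero)(1 - \Phi(x)) \sim x\,e^{-y}/(n \sigmazero)$. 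Squaring and summing over the $O(n^2)$ pairs gives $\sum_{i \neq j} \mathrm{Cov}(X_i, X_j) = O((\log n)/\sigmazero^2) \to 0$, and combined with the diagonal $\mathrm{Var}(Z) \to e^{-y}$. The main obstacle is precisely this local-density estimate: the log-scale bound of Lemma~\ref{lemma:binomial_master}\ref{lemma:binomial_equal_pure} loses the $1/\sigmazero$ factor that is exactly what makes the covariance sum vanish, and the sharper Mills-type differencing recovering it is what forces the hypothesis $b \gg (\log n)^3$ rather than the weaker $b \gg \log n$.
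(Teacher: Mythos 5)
Your positive-relation argument reconstructs the abstract fact behind \citep[Theorem 2.G]{barbour1992poisson}, which the paper simply cites: the indicators are increasing functions of the independent edges, hence positively related. Your $\lambda$-computation matches the paper.

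The variance step has a genuine gap. After writing $\cov(X_i,X_j)=q(1-q)(v-u)^2$ with $v-u=\P(\tilde d_i=c)$, you discard the factor $q(1-q)$ by bounding it by a constant, which forces you to prove $n^2\,\P(\tilde d_i=c)^2\to0$. You then try to extract the point mass $\P(\tilde d_i=c)$ by differencing Lemma \ref{lemma:binomial_tail_exp_scale} at two consecutive integers, i.e.\ using $\P(\tilde d_i>c-1)-\P(\tilde d_i>c)\sim\Phi(x)-\Phi(x-1/\sigmazero)$. This is not justified: Lemma \ref{lemma:binomial_tail_exp_scale} only controls the ratio of each tail to $1-\Phi(\cdot)$ up to an unspecified $(1+o(1))$ multiplicative error, so the two error terms, each of size $o(1-\Phi(x))$, can dominate the target difference $\Phi(x)-\Phi(x-1/\sigmazero)\asymp (1-\Phi(x))\,x/\sigmazero$, since $x/\sigmazero=o(1)$. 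You are implicitly assuming a second-order expansion that the lemma does not provide. Your parenthetical diagnosis is also off: Lemma \ref{lemma:binomial_master} part \ref{lemma:binomial_equal_pure} does \emph{not} lose the $1/\sigmazero$ factor --- it reads $\P(d(\beta_1,\beta_2)=h+t)\le\frac{1}{\sigmabeta}\exp\bigl(-\frac{h^2}{2\sigmabeta^2}(1-\varepsilon)\bigr)$, so it retains it and only loses a factor $n^{\varepsilon}$ in the exponent.

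Both problems vanish if you keep the $q(1-q)$ factor. Since $q(1-q)\le\frac{a}{n}(1-\frac{a}{n})\le\frac{2\sigmazero^2}{n-2}$, the covariance sum is bounded by $n^2\cdot\frac{\sigmazero^2}{n}\cdot\P(\tilde d_i=c)^2$; plugging in Lemma \ref{lemma:binomial_master} part \ref{lemma:binomial_equal_pure} with $h\sim\sqrt{2\log n}\,\sigmazero$ gives at most $n\cdot\sigmazero^2\cdot\frac{1}{\sigmazero^2}\,n^{-2(1-\varepsilon)+o(1)}=n^{-1+2\varepsilon+o(1)}\to0$ for any $\varepsilon<\tfrac12$, exactly the crude $T_4$-type bound the paper invokes. (Alternatively, one could cite Lemma \ref{lemma:eq_upper} directly, which is the sharp local estimate underlying Lemma \ref{lemma:binomial_tail_exp_scale} and furnishes $\P(\tilde d_i=c)\le(1+o(1))\frac{1}{\sqrt{2\pi}\sigmazero}e^{-x^2/2}$ without any differencing.) The rest of your outline is fine.
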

	An application of the Poisson approximation theorem above concludes the proof modulo the proof of Lemma \ref{lem:suff_poisson}. \qed


	\begin{proof}[Proof of Lemma \ref{lem:suff_poisson}]
		First, we establish that $X_i = \mathbf{1}(d_i > \muzero + x \sigmazero)$ are \textit{positively related}. We note that the $X_i$ are increasing functions of independent random variables $\mathbf{Y}$ and thus the $\{X_i: 1 \leq i \leq n \}$ are positively related \citep[Theorem 2.G]{barbour1992poisson}. Next, we check that $\lambda \to \exp[-y]$. We have, 
		\begin{align}
		\lambda = n \P_{\mathbf{1},a,b} [ d_1 > \muzero + x \sigmazero] = n (1 - \Phi(x))(1 + o(1)), \nonumber 
		\end{align}
		where the last equality follows from Lemma \ref{lemma:binomial_tail_exp_scale}. Combining Mills ratio with the definition of $x$ immediately gives us the desired result. Finally, we check the variance condition. 
		\begin{align}
		\Var(Z) = n \P_{\mathbf{1},a,b}[d_1 > \muzero + x \sigmazero] ( 1 - \P_{\mathbf{1},a,b}[d_1 > \mu_n + x \sigma_n]) + {n \choose 2} \cov(X_1, X_2). \nonumber
		\end{align}
		By computations similar to those involved in control of term $T_4$ of \textcolor{black}{Lemma \ref{lemma:bounds_five_sparse_graphs} proved} in Appendix \ref{section:technical_lemmas} in the Supplement \cite{ms2019}, ${n \choose 2} \cov(X_1, X_2)=n^{-1(1+o(1))}$ for any fixed $y\in \mathbb{R}$. This completes the proof. 
	\end{proof}

	\subsection{Proof of Theorem \ref{thm:sparsesignal_optimal}} We claim that the claim of the theorem follows from Theorem \ref{thm:sparsesignal_vanilla_upper_gen} since $\mathrm{Risk}_n(\hat{\C},\Xi(s, A))\rightarrow 0$. To see this note that $\mathrm{Risk}_n(\hat{\C},\Xi(s, A))\rightarrow 0$ implies $\Ptheta^{(\C)}(\mathrm{dist}(\hat{\C},\C)\geq 1)\rightarrow 0$ uniformly over all $\C\in \mathcal{S}_k^{\nu}$ and $\Xi(s, A)$. Since $\mathrm{dist}(\hat{\C},\C)$ is a $\mathbb{N}$-valued random variable, we immediately have $\Ptheta^{(\C)}(\mathrm{dist}(\hat{\C},\C)=0)\rightarrow 1$ uniformly over all $\C\in \mathcal{S}_k^{\nu}$ and 	$\Xi(s, A)$. The proof therefore follows from Theorem \ref{thm:sparsesignal_vanilla_upper_gen}  by working on the event $\mathrm{dist}(\hat{\C},\C)=0$.

	\subsection{Proof of Theorem \ref{thm:lower_below_logn}\ref{thm:lower_below_logn_lower}}
	The proof uses an appropriate truncated second moment argument. We shall also use the standard Chernoff bound which we collect in the following lemma.
	\begin{lemma}
		\label{lem:binom_chernoff}
		For any positive integer $n$ and $p,q \in (0,1)$, we have, 
		\begin{align}
		\P[{\rm{Bin}}(n, p) \geq q n] \leq \exp( - n H_p(q)), \nonumber 
		\end{align}
		where $H_p(q)$ is the binary entropy function. Further, setting $h(x) = x\log x - x + 1$, we have, 
		\begin{align}
		0 \leq H_p(q) - p h(q/p) \leq O\Big(\frac{q^2}{1-q}\Big). \nonumber 
		\end{align}
	\end{lemma}
	Consider a prior which first selects a fixed community assignment $[n]= \tilde{S}_1 \cup \cdots \cup \tilde{S}_k$, with $|\tilde{S}_\ell| = n/k$ for all $1\leq \ell \leq k$. Note that this choice is allowed by the assumption that $\nu\geq 1$ for $\mathcal{S}^{\nu}_k$. Given the community assignment, the prior selects $s/k$ locations from each $\tilde{S}_\ell$. This constructs the signal set $S$. Finally, for $i \notin S$, set $\theta_i = 1$, while for $i \in S$, $\theta_i = 1+A$. This specifies our prior on the parameter space. We denote the prior on this space as $\pi$, and the signal set as $S(\mathbf{\Theta})$. Next, given $\mathbf{\Theta}$, for each $i \in [n]$, we define 
	\begin{align}
	d_i(1) = \sum_{j \in \mathcal{C}(i)\cap S(\mathbf{\Theta})^c} Y_{ij}, \quad
	d_i(2) = \sum_{j \in \mathcal{C}_i^c \cap S(\mathbf{\Theta})^c}Y_{ij}, \nonumber 
	\end{align}
	where $\mathcal{C}(i) = \tilde{S}_\ell$ if $i \in \tilde{S}_{\ell}$. We set $d_i = d_i(1) + d_i(2)$, $1\leq i \leq n$, and define the ``good" event 
	\begin{align}
	\Gamma_{S,i} = \left\{ d_i \leq (1+c) \frac{\log n}{\log \Xi} \right\}, \nonumber 
	\end{align}
	where $\Xi= \log n/ \Big( \frac{a}{k} + \frac{(k-1)b}{k} \Big) $, and $c>0$ is a sufficiently small constant to be chosen later. Recall the likelihood $L_S$ \eqref{eq:ls}, define the truncated likelihood 
	$\tilde{L}_{\pi} = \E_{\pi}[L_S \mathbf{1}_{\cap_{i} \Gamma_{S,i}   }]$. Note that it suffices to prove that whenever $\gamma < \alpha$, $\E_{\mathbf{1},a,b}[\tilde{L}_{\pi}] = 1+ o(1)$ and $\E_{\mathbf{1},a,b}[(\tilde{L}_{\pi})^2] = 1+ o(1)$. We establish each of these in turn. 
	
	First, note that upon Fubini's theorem, 
	\begin{align}
	\E_{\mathbf{1},a,b}[\tilde{L}_{\pi}] = \E_{S}[\E_{\mathbf{1},a,b}[L_{S} \mathbf{1}_{\cap_{i=1}^{n}\Gamma_{S,i} }  ]]. \nonumber 
	\end{align}
	Therefore, to establish $\E_{\mathbf{1},a,b}[\tilde{L}_{\pi}] = 1+o(1)$, it suffices to establish that $\E_{\mathbf{1},a,b}[L_S \mathbf{1}_{\cap_{i=1}^{n}\Gamma_{S,i} }] = 1+ o(1)$ uniformly in $S$. Setting $\P_S$ to denote the alternative measure with $\theta_i =1+ A$ for $i\in S$ and $\theta_i = 1$ otherwise, we have, 
	\begin{align}
	&\E_{\mathbf{1},a,b}[L_S \mathbf{1}_{\cap_{i=1}^{n}\Gamma_{S,i} }] = 1 - \P_{S} [ \cup_{i=1}^{n} \Gamma_{S,i}^{c}] \geq 1 - \sum_{i=1}^{n}\P_{S}[\Gamma_{S,i}^c]\nonumber\\
	&= 1 - (n-s) \P\Big[ {\rm{Bin}}\Big(\frac{n-s}{k}, \frac{a}{n} \Big) + {\rm{Bin}}\Big(\frac{(k-1)(n-s)}{k}, \frac{b}{n}  \Big) > (1+c) \frac{\log n}{\log \Xi}  \Big]  \nonumber \\
	&- s \P\Big[ {\rm{Bin}}\Big(\frac{n-s}{k}, \frac{a}{n}(1+A) \Big) 
	+ {\rm{Bin}}\Big(\frac{(k-1)(n-s)}{k}, \frac{b}{n} (1+A)  \Big) > (1+c) \frac{\log n}{\log \Xi}  \Big].\nonumber
	\end{align}
	The following probability bounds complete the proof of the first part. First, note that 
	\begin{align}
	&\P\Big[{\rm{Bin}}\Big(\frac{n-s}{k}, \frac{a}{n} \Big) + {\rm{Bin}}\Big(\frac{(k-1)(n-s)}{k}, \frac{b}{n}  \Big) > (1+c) \frac{\log n}{\log \Xi}  \Big] \nonumber \\
	&\leq \P\Big[{\rm{Bin}} \Big( n-s, \frac{a}{n} \Big)> (1+c) \frac{\log n}{\log \Xi} \Big] \leq \exp\Big(-n H_{a/n}\Big( (1+c) \frac{\log n}{n \log \Xi} \Big) \Big) = o\Big(\frac{1}{n} \Big), \nonumber 
	\end{align}
	for any $c>0$, where the final inequality follows using Lemma~\ref{lem:binom_chernoff}. Finally, we note that 
	\begin{align}
	& \P\Big[ {\rm{Bin}}\Big(\frac{n-s}{k}, \frac{a}{n}(1+A) \Big) 
	+ {\rm{Bin}}\Big(\frac{(k-1)(n-s)}{k}, \frac{b}{n} (1+A)  \Big) > (1+c) \frac{\log n}{\log \Xi}  \Big] \nonumber \\
	&\leq \P\Big[ {\rm{Bin}} \Big(n-s, \frac{a}{n}(1+A) \Big)> (1+c) \frac{\log n}{\log \Xi} \Big]\leq \exp\Big(-(1+c/3) (1- \gamma) \log n \Big)\nonumber 
	\end{align}
	for $n$ sufficiently large. Thus
	\begin{align}
	\P\Big[ {\rm{Bin}}\Big(\frac{n-s}{k}, \frac{a}{n}(1+A) \Big) 
	+ {\rm{Bin}}\Big(\frac{(k-1)(n-s)}{k}, \frac{b}{n} (1+A)  \Big) > (1+c) \frac{\log n}{\log \Xi}  \Big] = o\Big(\frac{1}{s}\Big), \nonumber 
	\end{align}
	as $\gamma< \alpha$. This verifies the required bound. 
	
	Next, we focus on $\E_{\mathbf{1},a,b}[(\tilde{L}_{\pi})^2]$. Note that $\E_{\mathbf{1},a,b}[(\tilde{L}_{\pi})^2] = \E_{S_1, S_2}[\E_{\mathbf{1},a,b}[L_{S_1} L_{S_2} \mathbf{1}_{\cap_{i}( \Gamma_{S_1,i} \cap \Gamma_{S_2,i}) } ]]$, where $S_1, S_2$ are iid samples from the prior $\pi$. Note that for $i \in S_1 \cap S_2$, 
	\begin{align}
	\sum_{j\in S_1^c \cap S_2^c \cap \mathcal{C}(i)} Y_{ij} + \sum_{j \in S_1^c \cap S_2^c \cap \mathcal{C}(i)} Y_{ij} \leq (1+c) \frac{\log n}{ \log \Xi}, \nonumber 
	\end{align}
	an event that we denote as $\mathscr{C}_{S_1,S_2,i}$. Setting $\mathscr{C}_{S_1, S_2} = \cap_{i \in S_1 \cap S_2}\mathscr{C}_{S_1, S_2,i}$, we note that $\cap_{i} (\Gamma_{S_1,i} \cap \Gamma_{S_2,i}) \subseteq \mathscr{C}_{S_1, S_2}$. Armed with this observation, we seek to derive an upper bound on $E_{\mathbf{1},a,b}[L_{S_1} L_{S_2} \mathbf{1}_{\cap_{i}( \Gamma_{S_1,i} \cap \Gamma_{S_2,i}) }]$. We proceed exactly as in the proof of Theorem~\ref{thm:sparse_lower}. Thus we obtain, as in \eqref{eq:int1}, 
	\begin{align}
	&L_{S_1}L_{S_2} := \gamma_0 \prod_{i \in S_1 \cap S_2} \tilde{T}_i, \nonumber  \\
	&\E_{\mathbf{1},a,b}[L_{S_1} L_{S_2}\mathbf{1}_{\cap_i (\Gamma_{S_1,i} \cap \Gamma_{S_2,i})} ] \leq \E_{\mathbf{1},a,b}[\gamma_0] \E_{\mathbf{1},a,b} \Big[\Big( \prod_{i \in S_1 \cap S_2} \tilde{T}_i \Big) \mathbf{1}_{\mathscr{C}_{S_1, S_2}} \Big]. \nonumber 
	\end{align}
	The following lemma controls $\E_{\mathbf{1},a,b}[\gamma_0]$. The proof is exactly same as that of Lemma \ref{lemma:gamma0}, and is thus omitted. 
	\begin{lemma}
		\label{lemma:gamma0_new}
		As $n \to \infty$, $\E_{\mathbf{1},a,b}[\gamma_0] = 1+o(1)$, uniformly over all $S_1, S_2 \subset \{1, \cdots, n\}$, with $|S_i| =s= n^{1- \alpha}$, $i =1,2$, such that $|S_i \cap \tilde{S}_{\ell} | = \frac{s}{k}$, $i=1,2$, $1\leq \ell \leq k$. 
	\end{lemma}
	We complete the proof assuming Lemma~\ref{lemma:gamma0_new}. Using Lemma~\ref{lemma:binomial_change_of_measure}, setting $Z_{\ell} = |S_1 \cap S_2 \cap \tilde{S}_{\ell}|$, $1\leq \ell \leq k$, we have, 
	\begin{align}
	&\E_{\mathbf{1},a,b} \Big[ \tilde{T}_i \mathbf{1}_{\mathscr{C}_{S_1, S_2,i}} \Big] = \Big( 1 + \frac{\frac{a}{n} A^2 }{ 1 - \frac{a}{n}} \Big)^{\frac{n}{k} - \frac{2s}{k} + Z_{\mathcal{C}(i)}} \Big( 1 + \frac{\frac{b}{n} A^2}{1- \frac{b}{n}} \Big)^{\frac{(k-1)n}{k} -  \frac{2(k-1)s}{k} + \sum_{\ell \neq \mathcal{C}(i)} Z_\ell} \times \nonumber \\
	&\P\Big[ {\rm{Bin}}\Big(\frac{n}{k} - \frac{2s}{k} + Z_{\mathcal{C}(i)}, \frac{\frac{a}{n}(1+A)^2 }{1 + \frac{\frac{a}{n} A^2}{1- \frac{a}{n}}} \Big) + {\rm{Bin}}\Big(\frac{(k-1)n}{k} - \frac{2(k-1)s}{k} +\sum_{\ell \neq \mathcal{C}(i)} Z_{\ell}, \frac{\frac{b}{n}(1+A)^2 }{1 + \frac{\frac{b}{n} A^2}{1- \frac{b}{n}}} \Big) \leq (1+c) \frac{\log n}{\log \Xi}\Big]. \label{eq:int2}
	\end{align}
	To derive the desired bound, consider first the simpler case $\gamma < \frac{1}{2}$. In this case, using \eqref{eq:int2}, we obtain, 
	\begin{align}
	\E_{\mathbf{1},a,b} \Big[\Big( \prod_{i \in S_1 \cap S_2} \tilde{T}_i \Big) \mathbf{1}_{\mathscr{C}_{S_1, S_2}} \Big] \leq \Big( 1 + \frac{\frac{a}{n} A^2}{1- \frac{a}{n}} \Big)^{n |S_1 \cap S_2| }. \nonumber
	\end{align}
	Thus in this regime, 
	\begin{align}
	\E_{\mathbf{1},a,b}[(\tilde{L}_{\pi})^2] \leq \E_{S_1, S_2}\Big[\Big( 1 + \frac{\frac{a}{n} A^2}{1- \frac{a}{n}} \Big)^{n |S_1 \cap S_2| } \Big]. \nonumber 
	\end{align}
	Observe that $|S_1\cap S_2| \sim {\rm{Hyp}}(n,s,s)$ and we seek to bound an increasing function of a Hypergeometric variable. Thus by stochastic dominance, we have, 
	\begin{align}
	\E_{\mathbf{1},a,b}[(\tilde{L}_{\pi})^2] \leq (1+o(1))\E_{S_1, S_2}\Big[\Big( 1 + \frac{\frac{a}{n} A^2}{1- \frac{a}{n}} \Big)^{n U }\Big], \nonumber 
	\end{align}
	where $U \sim {\rm{Bin}}(s, s/n)$. Thus 
	\begin{align}
	\E_{\mathbf{1},a,b}[(\tilde{L}_{\pi})^2] \leq \Big[ 1 - \frac{s}{n} + \frac{s}{n}\Big( 1 + \frac{\frac{a}{n} A^2}{1- \frac{a}{n}} \Big)^n \Big] = 1 + o(1).  \nonumber 
	\end{align}
	as $\gamma <1/2$. Finally, we turn to the case $\gamma > 1/2$. Using $\log(1+x) \leq x$, we observe, 
	\begin{align}
	&\Big( 1 + \frac{\frac{a}{n} A^2 }{ 1 - \frac{a}{n}} \Big)^{\frac{n}{k} - \frac{2s}{k} + Z_{\mathcal{C}(i)}} \Big( 1 + \frac{\frac{b}{n} A^2}{1- \frac{b}{n}} \Big)^{\frac{(k-1)n}{k} -  \frac{2(k-1)s}{k} + \sum_{\ell \neq \mathcal{C}(i)} Z_\ell} \nonumber \\
	&\leq \exp\Big(\frac{a}{k}\cdot \frac{ A^2 }{ 1 - \frac{a}{n}} + \frac{(k-1)b}{k} \cdot\frac{ A^2}{1- \frac{b}{n}}\Big) \nonumber \\
	&= \exp\Big( A^2\Big(\frac{a}{k} + \frac{(k-1)b}{k} \Big)  + o(1) \Big), \nonumber 
	\end{align}
	where the last step follows as $a,b, A \ll \log n$ in this setting. Further, using Taylor expansion, as $\gamma \in (0,1)$, for $n$ sufficiently large, 
	\begin{align}
	\frac{\frac{a}{n}(1+A)^2}{1+ \frac{\frac{a}{n}A^2}{1- \frac{a}{n}} }= \frac{a}{n}(1+A)^2 + o\Big(\frac{1}{n}\Big), \quad \frac{\frac{b}{n}(1+A)^2}{1+ \frac{\frac{b}{n}A^2}{1- \frac{b}{n}} }= \frac{b}{n}(1+A)^2  + o\Big(\frac{1}{n}\Big). \label{eq:int3}
	\end{align}
	Finally, let $M_1\sim {\rm{Bin}}(n_1, p_1)$ and $M_2\sim {\rm{Bin}}(n_2, p_2)$ be independent random variables. By Chernoff inequality, for any $c_0>0$ and $t>0$, we have, 
	\begin{align}
	\P[M_1 + M_2 \leq c_0 (n_1 p_1 + n_2 p_2)] &\leq \exp\Big( c_0t (n_1p_1 + n_2 p_2) + n_1 \log(1- p_1 + p_1 \mathrm{e}^{-t}) + n_2 \log(1-p_2 +p_2 \mathrm{e}^{-t}) \Big) \nonumber \\
	&\leq \exp\Big( c_0t (n_1p_1 + n_2 p_2) + (n_1 p_1 + n_2p_2) (\mathrm{e}^{-t}-1) + O \Big(n_1p_1^2 + n_2p_2^2 \Big) \Big). \nonumber
	\end{align}
	Setting $t= - \log c_0$, we obtain 
	\begin{align}
	\P[M_1 + M_2 \leq c (n_1 p_1 + n_2 p_2)] \leq \exp\Big(- (n_1 p_1 + n_2 p_2) h(c_0) + O \Big(n_1p_1^2 + n_2p_2^2 \Big) \Big), \label{eq:int4}
	\end{align}
	where $h(c_0) = c_0 \log c_0 - c_0 +1$. 
	\noindent 
	Therefore, combining \eqref{eq:int2}, \eqref{eq:int3}, and \eqref{eq:int4}, we have, for $i \in S_1 \cap S_2$
	\begin{align}
	\E_{\mathbf{1},a,b} \Big[ \tilde{T}_i \mathbf{1}_{\mathscr{C}_{S_1, S_2,i}} \Big] \leq \exp\Big(A^2 \Big(\frac{a}{k} + \frac{(k-1)b}{k} \Big) +o(1) \Big).\exp\Big( - (1+A)^2 \Big(\frac{a}{k} + \frac{(k-1)b}{k}\Big) h(\xi_n) + o(1)  \Big),  \nonumber 
	\end{align}
	where we set 
	\begin{align}
	\xi_n = \frac{(1+c) \frac{\log n}{\log \Xi}}{(1+A)^2 \Big(\frac{a}{k} + \frac{(k-1)b}{k} \Big)}. \nonumber 
	\end{align}
	Thus we have, 
	\begin{align}
	\E_{\mathbf{1},a,b}\Big[ \tilde{T}_i \mathbf{1}_{\mathscr{C}_{S_1, S_2,i}} \Big] \leq \exp\Big(  (1+A)^2\Big(\frac{a}{k} + \frac{(k-1)b}{k}\Big) (-\xi_n \log \xi_n + \xi_n) + o(1)  \Big). \nonumber 
	\end{align}
	This implies, 
	\begin{align}
	\E_{\mathbf{1},a,b} \Big[\Big( \prod_{i \in S_1 \cap S_2} \tilde{T}_i \Big) \mathbf{1}_{\mathscr{C}_{S_1, S_2}} \Big] \leq \exp\Big(  |S_1 \cap S_2|\Big( (1+A)^2    \Big(\frac{a}{k} + \frac{(k-1)b}{k}\Big) (-\xi_n \log \xi_n + \xi_n) + o(1) \Big)  \Big). \nonumber 
	\end{align}
	Using stochastic dominance, we can bound the above expectation using that of $U\sim\mathrm{Bin}(s, s/n)$. This implies
	\begin{align}
	\E_{\mathbf{1},a,b}[ (\tilde{L}_{\pi})^2] &\leq (1+o(1)) \E\Big[\exp\Big( U\Big( (1+A)^2    \Big(\frac{a}{k} + \frac{(k-1)b}{k}\Big) (-\xi_n \log \xi_n + \xi_n) + o(1) \Big)  \Big) \Big]\nonumber\\
	&\leq (1+o(1))\exp\Big(\frac{s^2}{n} \exp\Big( \Big( (1+A)^2    \Big(\frac{a}{k} + \frac{(k-1)b}{k}\Big) (-\xi_n \log \xi_n + \xi_n)  \Big)\Big). \nonumber\\
	& = 1+o(1), \nonumber 
	\end{align}
	whenever $c>0$ is chosen small enough such that $2\alpha -1 > (1+c) (2\gamma -1)$. This completes the proof.

	\subsection{Proof of Theorem \ref{thm:lower_below_logn}\ref{thm:lower_below_logn_upper}}
	Throughout this proof, we set $h(u) = u\log u - u +1$. Fix a community partition $\mathcal{C} = \tilde{S}_1 \cup \cdots \cup \tilde{S}_k$ in $\mathcal{S}^{\nu}_k$, with $|\tilde{S}_l | = \lambda_l n$, $1\leq l \leq k$.  Recall that we denote the degree of the $i^{th}$ vertex as $d_i$, and set $d_i' = \sum_{j \in S(\mathbf{\Theta})^c} Y_{ij}$. For $i \in \tilde{S}_{l}$, observe that 
	\begin{align}
	\E_{\mathbf{1},a,b}[d_i] = \Big( |\tilde{S}_{l}| -1 \Big) \frac{a}{n} + (n- |\tilde{S}_{l}|)\frac{b}{n}:= \lambda_l. \nonumber
	\end{align}
	Define $\lambda_{\max} = \max_{1\leq l \leq k} \lambda_i$ and $\lambda_{\min} = \min_{1\leq l \leq k} \lambda_i$, and observe that $\mathcal{C} \in\mathcal{S}^{\nu}_k$ implies that there exist universal constants  $0<C_1<C_2$ such that 
	\begin{align}
	C_1 \Big(\frac{a}{k} + \frac{(k-1)b}{k} \Big)< \lambda_{\min} < \lambda_{\max} < C_2\Big(\frac{a}{k} + \frac{(k-1)b}{k} \Big). \label{eq:same_order}
	\end{align}
	Select $t_0$ such that $\lambda_{\max} h(t_0/\lambda_{\max}) = \log(n \log n)$. Consider a test which rejects the null whenever $\max_{1\leq i \leq n} d_i > t_0$. Then we have, 
	\begin{align}
	\P_{\mathbf{1},a,b}\Big[ \max_{1\leq i \leq n} d_i > t_0 \Big] \leq \sum_{l=1}^{k} |\tilde{S}_{l}| \P_{\mathbf{1},a,b}[d_i > t_0], \label{eq:int5}
	\end{align}
	where $i \in \tilde{S}_{l}$ in the RHS of the inequality above. 
	Observe that for $i \in \tilde{S}_l$, 
	\begin{align}
	\P_{\mathbf{1},a,b}[d_i > t_0] = \P[ X+ Y > t_0], \nonumber  
	\end{align}
	where $X \sim \mathrm{Bin}\Big( |\tilde{S}_l| -1, \frac{a}{n} \Big)$ and $Y\sim \mathrm{Bin}\Big( n- |\tilde{S}_l|, \frac{b}{n}\Big)$ are independent. Using Chernoff bound, for any $c>0$,
	\begin{align}
	\P[X+Y > t_0] &\leq \exp\Big( -c t_0 + \Big(|\tilde{S}_l|-1\Big) \log \Big(1- \frac{a}{n} + \frac{a}{n} e^c \Big)  + (n- |\tilde{S}_l|) \log \Big(1- \frac{b}{n} + \frac{b}{n} e^c\Big)  \Big)\nonumber \\
	&\leq \exp\Big(- ct_0 + \lambda_{\max} (e^c -1) \Big), \nonumber 
	\end{align}
	where the last inequality follows from $\log(1+x) \leq x$. Optimizing over $c>0$, for any $i \in [n]$, we get 
	\begin{align}
	\P_{\mathbf{1},a,b}[d_i > t_0] \leq \exp( - \log( n \log n)) = o\Big(\frac{1}{n} \Big). \nonumber 
	\end{align}
	Plugging this bound back into \eqref{eq:int5}, we note that this test has asymptotically zero Type I error. 
	
	Finally, we turn to the Type II error. We have, 
	\begin{align}
	\P_{\mathbf{\Theta},a,b}\Big[\max_{1\leq i \leq n} d_i > t_0\Big] \geq \P_{\mathbf{\Theta},a,b}\Big[ \max_{i \in S(\mathbf{\Theta})}d_i' > t_0 \Big]= \prod_{l=1}^{k}\prod_{i \in S(\mathbf{\Theta}) \cap \tilde{S}_l} \P_{\mathbf{\Theta},a,b}[d_i' > t_0]. \label{eq:int6}
	\end{align}
	Next, using stochastic dominance, for $i \in S(\mathbf{\Theta}) \cap \tilde{S}_l$, 
	\begin{align}
	\P_{\mathbf{\Theta},a,b}[d_i' > t_0]\geq \P\Big[\mathrm{Bin}\Big(|\tilde{S}_l|-s, (1+A) \frac{a}{n} \Big) + \mathrm{Bin}\Big((n- |\tilde{S}_l|)-s, (1+A) \frac{b}{n} \Big) > t_0 \Big]. \nonumber 
	\end{align}
	To further lower bound this probability, we note that 
	\begin{align}
	& \P\Big[\mathrm{Bin}\Big(|\tilde{S}_l|-s, (1+A) \frac{a}{n} \Big) + \mathrm{Bin}\Big(n- |\tilde{S}_l|-s, (1+A) \frac{b}{n} \Big) > t_0 \Big] \nonumber \\
	& = 1-  \P\Big[\mathrm{Bin}\Big(|\tilde{S}_l|-s, (1+A) \frac{a}{n} \Big) + \mathrm{Bin}\Big(n- |\tilde{S}_l|-s, (1+A) \frac{b}{n} \Big) \leq   t_0   \Big]\nonumber \\
	&\geq 1- \exp\Big(-  \lambda_l (1+A) h\Big(\frac{t_0}{\lambda_l (1+A)}\Big) + o(1)  \Big),  \nonumber
	\end{align}
	where the last inequality follows using \eqref{eq:int4}. 
	From our choice of $t_0$, observe that $t_0 = (1+o(1)) \log n/ \log (\frac{\log n}{\lambda_{\max}})$. By direct computation, we have, 
	\begin{align}
	\P\Big[\mathrm{Bin}\Big(|\tilde{S}_l|-s, (1+A) \frac{a}{n} \Big) + \mathrm{Bin}\Big(n- |\tilde{S}_l|-s, (1+A) \frac{b}{n} \Big) > t_0 \Big] 
	\geq 1 - 
	n^{-(1-\gamma) + o(1)}.  \nonumber 
	\end{align}
	Plugging this back into \eqref{eq:int6}, we obtain 
	\begin{align}
	\P_{\mathbf{\Theta},a,b}\Big[\max_{1\leq i \leq n} d_i > t_0\Big] \geq (1- n^{-(1-\gamma) + o(1)})^s \to 1 \nonumber 
	\end{align}
	as $\gamma > \alpha$. This controls the Type II error, and concludes the proof.

	\section{Proof of Technical Lemmas}\label{section:technical_lemmas}
	\subsection{Proof of Lemma \ref{lemma:binomial_change_of_measure}}
	Let $\mathcal{A}=\{(t_1,t_2)\in \mathbb{N}^2: 0\leq t_1\leq n_1,\ 0\leq t_2\leq n_2,\ \beta_1t_1+\beta_2t_2\in B\}$.
	\be
	\ & \E\left(\alpha_1^X\alpha_2^Y\mathbf{1}\left(\beta_1X+\beta_2Y\in B\right)\right)\\
	&=\sum_{(t_1,t_2)\in \mathcal{A}}{n_1\choose t_1}{n_2 \choose t_2}\alpha_1^{t_1}\alpha_2^{t_2}p_1^{t_1}p_2^{t_2}(1-p_1)^{n_1-t_1}(1-p_2)^{n_2-t_2} \\
	&=\sum_{(t_1,t_2)\in \mathcal{A}}{n_1\choose t_1}{n_2 \choose t_2}(\alpha_1p_1)^{t_1}(\alpha_2p_2)^{t_2}(1-p_1)^{n_1-t_1}(1-p_2)^{n_2-t_2} \\
	&=(1-p_1+\alpha_1 p_1)^{n_1}(1-p_2+\alpha_2 p_2)^{n_2} \sum_{(t_1,t_2)\in \mathcal{A}}\left\{\begin{array}{c}{n_1\choose t_1}{n_2 \choose t_2}\left(\frac{\alpha_1p_1}{1-p_1+\alpha_1p_1}\right)^{t_1}\left(\frac{\alpha_2p_2}{1-p_2+\alpha_2p_2}\right)^{t_2}\\
		\times\left(1-\frac{\alpha_1p_1}{1-p_1+\alpha_1p_1}\right)^{n_1-t_1}\left(1-\frac{\alpha_2p_2}{1-p_2+\alpha_2p_2}\right)^{n_2-t_2}\end{array}\right\}\\
	&=(1-p_1+\alpha_1 p_1)^{n_1}(1-p_2+\alpha_2 p_2)^{n_2}\P(\beta_1X'+\beta_2Y'\in B).
	\ee

	\subsection{Proof of Proposition \ref{lemma:hcnew_main}}
	We analyze each term in turn.
	
	{\textbf{Analysis of $\Etheta^{(\C)}(HC(\hcpar;t))$}}\\
	\be 
	\ &\Etheta^{(\C)}(HC(\hcpar;t))\\
	&=\sum_{l=1}^k\sum_{i\in \tilde{S}_l\cap S}\left\{\Ptheta^{(\C)}\left(D_i(\C,\beta_1,\beta_2)>t\right)-\Pzero^{(\C)}\left(D_i(\C,\beta_1,\beta_2)>t\right)\right\}
	\\&+\sum_{l=1}^k\sum_{i\in \tilde{S}_l\cap S^c}\left\{\Ptheta^{(\C)}\left(D_i(\tilde{S}_l,\beta_1,\beta_2)>t\right)-\Pzero^{(\C)}\left(D_i(\tilde{S}_l,\beta_1,\beta_2)>t\right)\right\}\\
	&\geq \sum_{l=1}^k\sum_{i\in \tilde{S}_l\cap S}\left\{\Ptheta^{(\C)}\left(D_i(\tilde{S}_l,\beta_1,\beta_2)>t\right)-\Pzero^{(\C)}\left(D_i(\tilde{S}_l,\beta_1,\beta_2)>t\right)\right\}.
	\ee
	Fix $l=1,\ldots,k$ and $i\in \tilde{S}_l\cap S$. First by direct application of Lemma \ref{lemma:binomial_master} part \ref{lemma:binomial_tail_pure}, we have
	\be 
	\Pzero^{(\C)}\left(D_i(\tilde{S}_l,\beta_1,\beta_2)>t\right)&\leq n^{-r+o(1)}.
	\ee
	
	Now, for $i\in \tilde{S}_l\cap S$, it easy to see that $$\beta_1d_i(1,\tilde{S}_l)+\beta_2d_i(2,\tilde{S}_l)\stackrel{\mathrm{stoch}}{\gtrsim} \beta_1\mathrm{Bin}\left(|\tilde{S}_l|-1,(1+A)\frac{a}{n}\right)+\beta_2\mathrm{Bin}\left(n-|\tilde{S}_l|,(1+A)\frac{b}{n}\right).$$
	
	Moreover,

	Therefore, for $i\in \tilde{S}_l\cap S$ we have using Lemma \ref{lemma:binomial_master} part \ref{lemma:binomial_tail_pure}
	\be 
	\Ptheta^{(\C)}\left(D_i(\tilde{S}_l,\beta_1,\beta_2)>t\right)&=\exp\left(-\frac{1}{2}\left(\sqrt{2r}-\sqrt{C^*\bar{\rho_l}}\right)^2(1+o(1))\log{n}\right),
	\ee
	where we have with $|\tilde{S}_l|=\lambda_ln$ and $\bar{\lambda}=\frac{1}{k}\sum_{l=1}^k\lambda_l$
	\be 
	\bar{\rho_l}^{-1}=\left[ \frac{(\beta_1^2 \tau_a (1 - \tau_a)\lambda_l + \beta_2^2 (1-\lambda_l)\tau_b(1- \tau_b) ) ( \tau_a (1 - \tau_a)\bar{\lambda} + (1-\bar{\lambda})\tau_b (1- \tau_{b}) ) }{(\beta_1 \tau_a\lambda_l + \beta_2 (1-\lambda_l)\tau_b )^2} \right].
	\ee
	
	Therefore, 
	\be 
	\ & \sum_{l=1}^k\sum_{i\in \tilde{S}_l\cap S}\left\{\Ptheta^{(\C)}\left(D_i(\tilde{S}_l,\beta_1,\beta_2)>t\right)-\Pzero^{(\C)}\left(D_i(\tilde{S}_l,\beta_1,\beta_2)>t\right)\right\}\\
	&=\sum_{l=1}^k|\tilde{S}_l\cap S|\left(\exp\left(-\frac{1}{2}\left(\sqrt{2r}-\sqrt{C^*\bar{\rho_l}}\right)^2(1+o(1))\log{n}\right)-\exp\left(-\frac{2r}{2}(1+o(1))\log{n}\right)\right).
	\ee
	
	This completes the proof of \eqref{eq:alt_expnew} noting that there exists at least one $l=1,\ldots,k$ such that $|\tilde{S}_l\cap S|\geq s/k$.

	{\textbf{Analysis of $\mathrm{Var}^{(\C)}_{\mathbf{\Theta},a,b}\left(HC(\hcpar;t)\right)$}}\\
	We begin by the following basic decomposition of the variance between diagonal and off-diagonal terms.
	\be 
	\ &\mathrm{Var}^{(\C)}_{\mathbf{\Theta},a,b}\left(HC(\hcpar;t)\right):=\sum_{i=1}^5 T_i,\\
	T_1&=\sum_{l=1}^k\sum_{i\in \tilde{S}_l\cap S}\mathrm{Var}^{(\C)}_{\mathbf{\Theta},a,b}\left(\I\left(D_i(\tilde{S}_l,\beta_1,\beta_2)>t\right)\right),\\
	T_2&=\sum_{l=1}^k\sum_{i\in \tilde{S}_l\cap S^c}\mathrm{Var}^{(\C)}_{\mathbf{\Theta},a,b}\left(\I\left(D_i(\tilde{S}_l,\beta_1,\beta_2)>t\right)\right),\\
	T_3&=\sum\limits_{{{\{l,l'\}\in \{1,\ldots,k\},\atop
				i\in \tilde{S}_l\cap S},\atop
			i'\in \tilde{S}_{l'}\cap S},\atop
		\{l,i\}\neq \{l',i'\}}\mathrm{Cov}^{(\C)}_{\mathbf{\Theta},a,b}\left(\I\left(D_i(\tilde{S}_l,\beta_1,\beta_2)>t\right),\I\left(D_{i'}(\tilde{S}_{l'},\beta_1,\beta_2)>t\right)\right),\\
	T_4&=\sum\limits_{{{\{l,l'\}\in \{1,\ldots,k\},\atop
				i\in \tilde{S}_l\cap S^c},\atop
			i'\in \tilde{S}_{l'}\cap S^c},\atop
		\{l,i\}\neq \{l',i'\}}\mathrm{Cov}^{(\C)}_{\mathbf{\Theta},a,b}\left(\I\left(D_i(\tilde{S}_l,\beta_1,\beta_2)>t\right),\I\left(D_{i'}(\tilde{S}_{l'},\beta_1,\beta_2)>t\right)\right),\\
	T_5&=\sum_{{\{l,l'\}\in \{1,\ldots,k\},\atop
			i\in \tilde{S}_l\cap S,}\atop
		i'\in \tilde{S}_{l'}\cap S^c}\mathrm{Cov}^{(\C)}_{\mathbf{\Theta},a,b}\left(\I\left(D_i(\tilde{S}_l,\beta_1,\beta_2)>t\right),\I\left(D_{i'}(\tilde{S}_{l'},\beta_1,\beta_2)>t\right)\right).
	\ee
	
	The control of the various terms above is achieved by the following lemma.
	
	
	\begin{lemma}
		\label{lemma:bounds_five_sparse_graphs}
		Assume the conditions of Theorem \ref{thm:sparsesignal_vanilla_upper_gen}. For $t = \sqrt{2 r \log n}$ with $r > \frac{C^*\genconk}{2}$, we have for any $\epsilon>0$
		\be
		\lim_{n\to \infty} \frac{\log T_1}{\log n} &= 1-\alpha -\frac{1}{2} \left(\sqrt{2r}-\sqrt{C^*\genconk}\right)^2, \quad
		\lim_{n \to \infty} \frac{\log T_2}{\log n}  = 1-r, \\
		\lim_{n \to \infty} \frac{\log T_3}{\log n} &\leq 1-2\alpha-\left(\sqrt{2r}-\sqrt{C^*\genconk} \right)^2(1-\varepsilon),  \quad
		\lim_{n\to \infty} \frac{\log T_4}{\log n} \leq 1-2r(1-\varepsilon), \\
		\lim_{n \to \infty}& \frac{\log T_5}{\log n} \leq 1-\alpha-\left(\frac{1}{2}\left(\sqrt{2r}-\sqrt{C^*\genconk} \right)^2+ r\right)(1-\varepsilon). 
		\ee
	\end{lemma}
	We note that Lemma \ref{lemma:bounds_five_sparse_graphs} indeed verifies \eqref{eq:alt_varnew} by taking $\epsilon>0$ small enough. Also this verifies \eqref{eq:null_varnew} by taking $C^*=0$ in Lemma \ref{lemma:bounds_five_sparse_graphs}.
	\qed
	
	\begin{proof}[Proof of Lemma \ref{lemma:bounds_five_sparse_graphs}]

		We will constantly use the following simple identity.
		\begin{lemma}\label{lemma_simple_identity}
			For real numbers $p,x_1,x_2,y_1,y_2$ 
			$$px_1x_2+(1-p)y_1y_2-(px_1+(1-p)y_1)(px_2+(1-p)y_2)=p(1-p)(x_1-y_1)(x_2-y_2).$$	
		\end{lemma}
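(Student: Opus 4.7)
The plan is to verify the identity by direct algebraic expansion of the left-hand side, showing it collapses to $p(1-p)(x_1-y_1)(x_2-y_2)$.

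First I would expand the product $(px_1 + (1-p)y_1)(px_2 + (1-p)y_2)$ to obtain
\be
p^2 x_1 x_2 + p(1-p)(x_1 y_2 + y_1 x_2) + (1-p)^2 y_1 y_2. \nonumber
\ee
Subtracting this from $p x_1 x_2 + (1-p) y_1 y_2$ and collecting terms in $x_1 x_2$ and $y_1 y_2$ using $p - p^2 = p(1-p)$ and $(1-p) - (1-p)^2 = p(1-p)$, the left-hand side becomes
\be
p(1-p)\bigl[x_1 x_2 + y_1 y_2 - x_1 y_2 - y_1 x_2\bigr]. \nonumber
\ee
Finally I would factor the bracketed expression as $(x_1 - y_1)(x_2 - y_2)$, which is immediate, yielding the claimed identity.

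There is no real obstacle here: the statement is a purely algebraic identity of degree two in each variable, and the probabilistic interpretation (it is the covariance $\mathrm{Cov}(X_1, X_2)$ for a pair $(X_1,X_2)$ taking the values $(x_1,x_2)$ with probability $p$ and $(y_1,y_2)$ with probability $1-p$) merely provides a sanity check but is not needed for the proof.
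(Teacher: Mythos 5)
Your proof is correct, and it is the natural one: the paper itself states this lemma without proof (calling it a "simple identity"), and direct expansion followed by collecting the $x_1x_2$ and $y_1y_2$ coefficients via $p-p^2=(1-p)-(1-p)^2=p(1-p)$ is exactly what the claim reduces to. Your remark that this is the covariance of a two-point random vector is the same observation that motivates the lemma's use in the paper's variance computations, though, as you note, it is not needed for the verification.
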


		\paragraph*{\textbf{Control of $T_1$}}
		
		Note that for $i\in \tilde{S}_l\cap S$, $l=1,\ldots,k$, we have
		\be 
		\mathrm{Var}^{(\C)}_{\mathbf{\Theta},a,b}\left(\I\left(D_i(\tilde{S}_l,\beta_1,\beta_2)>t\right)\right)&=a^{(s_l)}(t)\times (1-a^{(s_l)}(t)).
		\ee
		where 
		\be 
		a^{(s_l)}(t)=\Ptheta^{(\C)}\left(\beta_1d_i(1,\tilde{S}_l)+\beta_2d_i(2,\tilde{S}_l)>t\sigmabetanulll+\beta_1 \muonenull(\tilde{S}_l)+\beta_2\mutwonull(\tilde{S}_l)\right).
		\ee
		Note that under $\Ptheta^{(\C)}$ we have $\beta_1d_i(1,\tilde{S}_l)+\beta_2d_i(2,\tilde{S}_l)\sim \beta_1(Z_{11}^{(l)}+Z_{12}^{(l)})+\beta_2(Z_{21}^{(l)}+Z_{22}^{(l)})$ with the following independent components
		\be 
		Z_{11}^{(l)}\sim \Bin\left(s_l-1,(1+A)^2\frac{a}{n}\right),\\
		Z_{12}^{(l)}\sim \Bin\left(n\lambda_l-s_l,(1+A)\frac{a}{n}\right),\\
		Z_{21}^{(l)}\sim \Bin\left(n(1-\lambda_l)-\sum_{j\neq l}s_j,(1+A)\frac{b}{n}\right),\\
		Z_{22}^{(l)}\sim \Bin\left(\sum_{j\neq l}s_j,(1+A)^2\frac{b}{n}\right).
		\ee
		To operationalize Lemma \ref{lemma:binomial_master}, note that
		\be 
		\ & \frac{t\sigmabetanulll+\beta_1\muonenull(\tilde{S}_l)+\beta_2\mutwonull(\tilde{S}_l)-\beta_1(n\lambda_l-s_l)(1+A)\frac{a}{n}-\beta_2(n(1-\lambda_l)-\sum_{j\neq l}s_j)(1+A)\frac{b}{n}}{\sqrt{\beta_1^2(n\lambda_l-s_l)(1+A)\frac{a}{n}\left(1-(1+A)\frac{a}{n}\right)+\beta_2^2(n(1-\lambda_l)-\sum_{j\neq l}s_j)(1+A)\frac{b}{n}\left(1-(1+A)\frac{b}{n}\right)}}\\
		&=\left(\sqrt{2r}-\sqrt{C^*\bar{\rho_l}}\right)(1+o(1))\sqrt{\log{n}},
		\ee
		where recall that
		\be 
		\bar{\rho_l}^{-1}=\left[ \frac{(\beta_1^2 \tau_a (1 - \tau_a)\lambda_l + \beta_2^2 (1-\lambda_l)\tau_b(1- \tau_b) ) ( \tau_a (1 - \tau_a)\bar{\lambda} + (1-\bar{\lambda})\tau_b (1- \tau_{b}) ) }{(\beta_1 \tau_a\lambda_l + \beta_2 (1-\lambda_l)\tau_b )^2} \right].
		\ee
		Therefore, applying  Lemma \ref{lemma:binomial_master} Part (b, ii), 
		\be 
		\ & \sum_{l=1}^k\sum_{i\in \tilde{S}_l\cap S}\mathrm{Var}^{(\C)}_{\mathbf{\Theta},a,b}\left(\I\left(D_i(\tilde{S}_l,\beta_1,\beta_2)>t\right)\right)\\
		&=\sum_{l=1}^k|\tilde{S}_l\cap S|\exp\left(-\frac{1}{2}\left(\sqrt{2r}-\sqrt{C^*\bar{\rho_l}}\right)^2(1+o(1))\log{n}\right)(1+o(1)).
		\ee
		This completes the verification of the control of Term $T_1$ by noting that there exists at least one $l=1,\ldots,k$ such that $|\tilde{S}_l\cap S|\geq s/k$ and considering the asymptotics of $\tau_a,\tau_b$ under the assumptions of Theorem \ref{thm:sparsesignal_vanilla_upper_gen}.
		
		\paragraph*{\textbf{Control of $T_2$}}
		Note that for $i\in \tilde{S}_l\cap S^c$, $l=1,\ldots,k$, we have
		\be 
		\mathrm{Var}^{(\C)}_{\mathbf{\Theta},a,b}\left(\I\left(D_i(\tilde{S}_l,\beta_1,\beta_2)>t\right)\right)&=a^{(s_l^c)}(t)\times (1-a^{(s_l^c)}(t)).
		\ee
		where 
		\be 
		a^{(s_l^c)}(t)=\Ptheta^{(\C)}\left(\beta_1d_i(1,\tilde{S}_l)+\beta_2d_i(2,\tilde{S}_l)>t\sigmabetanulll+\beta_1 \muonenull(\tilde{S}_l)+\beta_2\mutwonull(\tilde{S}_l)\right).
		\ee
		and under $\Ptheta^{(\C)}$ we have $\beta_1d_i(1,\tilde{S}_l)+\beta_2d_i(2,\tilde{S}_l)\sim \beta_1(Z_{11}^{(l)}+Z_{12}^{(l)})+\beta_2(Z_{21}^{(l)}+Z_{22}^{(l)})$ with the following independent components
		\be 
		Z_{11}^{(l)}\sim \Bin\left(s_l,(1+A)\frac{a}{n}\right),\\
		Z_{12}^{(l)}\sim \Bin\left(n\lambda_l-s_l-1,\frac{a}{n}\right),\\
		Z_{21}^{(l)}\sim \Bin\left(n(1-\lambda_l)-\sum_{j\neq l}s_j,\frac{b}{n}\right),\\
		Z_{22}^{(l)}\sim \Bin\left(\sum_{j\neq l}s_j,(1+A)\frac{b}{n}\right).
		\ee
		To operationalize Lemma \ref{lemma:binomial_master}, note that
		\be 
		\ & \frac{t\sigmabetanulll+\beta_1\muonenull(\tilde{S}_l)+\beta_2\mutwonull(\tilde{S}_l)-\beta_1(n\lambda_l-s_l)\frac{a}{n}-\beta_2(n(1-\lambda_l)-\sum_{j\neq l}s_j)\frac{b}{n}}{\sqrt{\beta_1^2(n\lambda_l-s_l)\frac{a}{n}\left(1-\frac{a}{n}\right)+\beta_2^2(n(1-\lambda_l)-\sum_{j\neq l}s_j)\frac{b}{n}\left(1-\frac{b}{n}\right)}}\\
		&=\sqrt{2r}(1+o(1))\sqrt{\log{n}}.
		\ee
		Therefore, applying  Lemma \ref{lemma:binomial_master} Part (b, ii), 
		\be 
		\ & \sum_{l=1}^k\sum_{i\in \tilde{S}_l\cap S^c}\mathrm{Var}^{(\C)}_{\mathbf{\Theta},a,b}\left(\I\left(D_i(\tilde{S}_l,\beta_1,\beta_2)>t\right)\right)\\
		&=\sum_{l=1}^k|\tilde{S}_l\cap S^c|n^{-r+o(1)}(1+o(1)).
		\ee
		
		This completes the verification of the control of Term $T_1$ by noting that there exists at least one $l=1,\ldots,k$ such that $|\tilde{S}_l\cap S|\geq s/k$.
		\paragraph*{\textbf{Control of $T_3$}}
		Similar to \cite{mms2016}, we begin by noting the following simple identities followed by local central limit theorem type estimates. However, in order to deal with arbitrary linear combinations, one need more detailed computations and uniform control of local central limit type estimates.
		
		First consider $\mathrm{Cov}^{(\C)}_{\mathbf{\Theta},a,b}\left(\I\left(D_i(\tilde{S}_l,\beta_1,\beta_2)>t\right),\I\left(D_{i'}(\tilde{S}_l,\beta_1,\beta_2)>t\right)\right)$ for $i\neq i'\in \tilde{S}_l\cap S$ for some $l=1,\ldots,k$. Then
		\be 
		\mathrm{Cov}^{(\C)}_{\mathbf{\Theta},a,b}\left(\I\left(D_i(\tilde{S}_l,\beta_1,\beta_2)>t\right),\I\left(D_{i'}(\tilde{S}_l,\beta_1,\beta_2)>t\right)\right)&=b^{(s_l)}(t)-(a^{(s_l)}(t))^2,
		\ee
		where
		\be 
		b^{(s_l)}(t)&=\Ptheta^{(\C)}(D_i(\hcparl)>t,D_{i'}(\hcparl)>t),\\
		a^{(s_l)}(t)&=\Ptheta^{(\C)}\left(D_i(\hcparl)>t\right).
		\ee
		Now note that
		\be 
		b^{(s_l)}(t)&=\frac{a}{n}(1+A)^2 (a^{(s_l)'}(t) )^2+\left(1-\frac{a}{n}(1+A)^2\right)(a^{(s_l)''}(t))^2,\\
		a^{(s_l)}(t)&=\frac{a}{n}(1+A)^2 (a^{(s_l)'}(t) )+\left(1-\frac{a}{n}(1+A)^2\right)(a^{(s_l)''}(t)),
		\ee
		where
		\be 
		a^{(s_l)'}(t)&=\Ptheta^{(\C)}\left(\beta_1d_i'(1,\tilde{S}_l)+\beta_2d_i(2,\tilde{S}_l)>t\sigmabetanulll+\mubetanulll-\beta_1\right),\\
		a^{(s_l)''}(t)&=\Ptheta^{(\C)}\left(\beta_1d_i'(1,\tilde{S}_l)+\beta_2d_i(2,\tilde{S}_l)>t\sigmabetanulll+\mubetanulll\right), \label{eqn:aprimes_signal}
		\ee
		where $d_i'(1,\tilde{S}_l)=\sum\limits_{t\in S_l:\atop t\neq i'}Y_{it}$. Therefore using Lemma \ref{lemma_simple_identity} for $l \in \{1,2\}$ we have
		\be
		(b^{(s_l)}(t)-(a^{(s_l)}(t))^2)&=(1+A)^2\frac{a}{n}\left(1-(1+A)^2\frac{a}{n}\right)\left(a^{(s_l)'}(t)-a^{(s_l)''}(t)\right)^2.\\ \label{eqn:signal_cov_within_block}
		\ee
		
		Now note that for $l \in \{1,\ldots,k\}$ 
		\be 
		\ & \left(a^{(s_l)'}(t)-a^{(s_l)''}(t)\right)\\
		&=\Ptheta^{(\C)}\left(\beta_1d_i'(1,\tilde{S}_l)+\beta_2d_i(2,\tilde{S}_l)>t\sigmabetanulll+\mubetanulll-\beta_1\right)
		\\
		&-\Ptheta^{(\C)}\left(\beta_1d_i'(1,\tilde{S}_l)+\beta_2d_i(2,\tilde{S}_l)>t\sigmabetanulll+\mubetanulll\right)\\
		&\leq const\cdot \sup_{|\xi|\leq \beta_1}\Ptheta^{(\C)}\left(\beta_1d_i'(1,\tilde{S}_l)+\beta_2d_i(2,\tilde{S}_l)=t\sigmabetanulll+\mubetanulll+\xi\right),\\ \label{eqn:signal_lclt}
		\ee
		where $const$ depends only on $\beta_1,\beta_2$. Now $\beta_1d_i'(1,\tilde{S}_l)+\beta_2d_i(2,\tilde{S}_l)\sim \sum\limits_{t=1}^4 Z_t^{(l)}$ with independent components
		\be 
		Z_1^{(l)}\sim \Bin\left(s_l-2,(1+A)^2\frac{a}{n}\right),\\
		Z_2^{(l)}\sim \Bin\left(n\lambda_l-s_l,(1+A)\frac{a}{n}\right),\\
		Z_3^{(l)}\sim \Bin\left(n\lambda_l-\sum_{j\neq l}s_j,(1+A)\frac{b}{n}\right),\\
		Z_4^{(l)}\sim \Bin\left(\sum_{j\neq l}s_j,(1+A)^2\frac{b}{n}\right).\label{eqn:signal_bin_sim}
		\ee
		Further note that
		\be 
		\ & \frac{t\sigmabetanulll+\beta_1\muonenull(\tilde{S}_l)+\beta_2\mutwonull(\tilde{S}_l)-\beta_1(n\lambda_l-s_l)(1+A)\frac{a}{n}-\beta_2(n(1-\lambda_l)-\sum_{j\neq l}s_j)(1+A)\frac{b}{n}}{\sqrt{\beta_1^2(n\lambda_l-s_l)(1+A)\frac{a}{n}\left(1-(1+A)\frac{a}{n}\right)+\beta_2^2(n(1-\lambda_l)-\sum_{j\neq l}s_j)(1+A)\frac{b}{n}\left(1-(1+A)\frac{b}{n}\right)}}\\
		&=\left(\sqrt{2r}-\sqrt{C^*\bar{\rho_l}}\right)(1+o(1))\sqrt{\log{n}},\label{eqn:signal_cov_exponent}
		\ee
		where recall that
		\be 
		\bar{\rho_l}^{-1}=\left[ \frac{(\beta_1^2 \tau_a (1 - \tau_a)\lambda_l + \beta_2^2 (1-\lambda_l)\tau_b(1- \tau_b) ) ( \tau_a (1 - \tau_a)\bar{\lambda} + (1-\bar{\lambda})\tau_b (1- \tau_{b}) ) }{(\beta_1 \tau_a\lambda_l + \beta_2 (1-\lambda_l)\tau_b )^2} \right].
		\ee
		
		Next consider $\mathrm{Cov}^{(\C)}_{\mathbf{\Theta},a,b}\left(\I\left(D_i(\tilde{S}_{l_1},\beta_1,\beta_2)>t\right),\I\left(D_{i'}(\tilde{S}_{l_2},\beta_1,\beta_2)>t\right)\right)$ for $i\in \tilde{S}_{l_1}\cap S$ and $i'\in \tilde{S}_{l_2}\cap S$ for some $l_1\neq l_2\in\{1,\ldots,k\}$. Then
		\be 
		\mathrm{Cov}^{(\C)}_{\mathbf{\Theta},a,b}\left(\I\left(D_i(\tilde{S}_{l_1},\beta_1,\beta_2)>t\right),\I\left(D_{i'}(\tilde{S}_{l_2},\beta_1,\beta_2)>t\right)\right)&=b^{(s_{l_1},s_{l_2})}(t)-a^{(s_{l_1})}(t)a^{(s_{l_2})}(t),
		\ee
		where
		\be 
		b^{(s_{l_1},s_{l_2})}&=\Ptheta^{(\C)}(D_i(\hcparlone)>t,D_{i'}(\hcparltwo)>t),i\in \tilde{S}_{l_1}\cap S, i'\in \tilde{S}_{l_2}\cap S\\
		a^{(s_l)}(t)&=\Ptheta^{(\C)}\left(D_i(\hcparl)>t\right).
		\ee
		Now note that
		\be
		b^{(s_{l_1},s_{l_2})}(t)&=\frac{b}{n}(1+A)^2 (a^{(s_{l_1},s_{l_2})'}(t)a^{(s_{l_2},s_{l_1})'}(t) )+\left(1-\frac{b}{n}(1+A)^2\right)(a^{(s_{l_1},s_{l_2})''}(t)a^{(s_{l_2},s_{l_1})''}(t) ),
		\ee
		where
		\be 
		a^{(s_{l_1},s_{l_2})'}(t)&=\Ptheta^{(\C)}\left(\beta_1d_i(1,\tilde{S}_{l_1})+\beta_2d_i'(2,\tilde{S}_{l_1})>t\sigmabetanulllone+\mubetanulllone-\beta_2\right),\\
		a^{(s_{l_1},s_{l_2})''}(t)&=\Ptheta^{(\C)}\left(\beta_1d_i(1,\tilde{S}_{l_1})+\beta_2d_i'(2,\tilde{S}_{l_1})>t\sigmabetanulllone+\mubetanulllone\right),\\
		a^{(s_{l_2},s_{l_1})'}(t)&=\Ptheta^{(\C)}\left(\beta_1d_{i'}(1,\tilde{S}_{l_2})+\beta_2d_{i'}'(2,\tilde{S}_{l_2})>t\sigmabetanullltwo+\mubetanullltwo-\beta_2\right),\\
		a^{(s_{l_2},s_{l_1})''}(t)&=\Ptheta^{(\C)}\left(\beta_1d_{i'}(1,\tilde{S}_{l_2})+\beta_2d_{i'}'(2,\tilde{S}_{l_2})>t\sigmabetanullltwo+\mubetanullltwo\right), \label{eqn:aprimes_cross_signal}
		\ee
		where we define $d_i'(2,\tilde{S}_{l_1})=\sum\limits_{t\in \tilde{S}_{l_1}^c:\atop t\neq i'}Y_{it}$ and $d_{i'}'(2,\tilde{S}_{l_2})=\sum\limits_{t\in \tilde{S}_{l_2}^c:\atop t\neq i}Y_{i't}$
		Therefore using Lemma \ref{lemma_simple_identity}
		\be 
		\ & (b^{(s_{l_1},s_{l_2})}(t)-a^{(s_{l_1})}(t)a^{(s_{l_2})}(t))\\&=(1+A)^2\frac{b}{n}\left(1-(1+A)^2\frac{b}{n}\right)\left(a^{(s_{l_1},s_{l_2})'}(t)-a^{(s_{l_1},s_{l_2})''}(t)\right)\left(a^{(s_{l_2},s_{l_1})'}(t)-a^{(s_{l_2},s_{l_1})''}(t)\right).\\\label{eqn:signal_cov_across_block}
		\ee
		Similar to before
		\be 
		\ &\left(a^{(s_{l_1},s_{l_2})'}(t)-a^{(s_{l_1},s_{l_2})''}(t)\right)\\
		&\leq const\cdot \sup_{|\xi|\leq \beta_1}\Ptheta^{(\C)}\left(\beta_1d_i(1,\tilde{S}_{l_1})+\beta_2d_i'(2,\tilde{S}_{l_1})=t\sigmabetanulllone+\mubetanulllone+\xi\right),\\
		\ & \left(a^{(s_{l_2},s_{l_1})'}(t)-a^{(s_{l_2},s_{l_1})''}(t)\right)\\
		&\leq const\cdot \sup_{|\xi|\leq \beta_1}\Ptheta^{(\C)}\left(\beta_1d_{i'}(1,\tilde{S}_{l_2})+\beta_2d_{i'}'(2,\tilde{S}_{l_2})=t\sigmabetanullltwo+\mubetanullltwo+\xi\right). \\ \label{eqn:across_signal_lclt}
		\ee
		where $const$ depends only on $\beta_1,\beta_2$.
		
		Now $\beta_1d_i(1,\tilde{S}_{l_1})+\beta_2d_i'(2,\tilde{S}_{l_1})\sim \sum\limits_{t=1}^4 Z_t^{(l_1)}$ with independent components
		\be 
		Z_1^{(l_1)}\sim \Bin\left(s_{l_1}-1,(1+A)^2\frac{a}{n}\right),\\
		Z_2^{(l_1)}\sim \Bin\left(n\lambda_{l_1}-s_{l_1},(1+A)\frac{a}{n}\right),\\
		Z_3^{(l_1)}\sim \Bin\left(n(1-\lambda_{l_1})-\sum_{j\neq l_1}s_j,(1+A)\frac{b}{n}\right),\\
		Z_4^{(l_1)}\sim \Bin\left(\sum_{j\neq l_1}s_j-1,(1+A)^2\frac{b}{n}\right),\label{eqn:acrosssignal_bin_sim_s1}
		\ee
		and note that
		
		\be 
		\ & \frac{t\sigmabetanulllone+\mubetanulllone-\beta_1(n\lambda_{l_1}-s_{l_1})(1+A)\frac{a}{n}-\beta_2(n(1-\lambda_{l_1})-\sum_{j\neq l_1}s_j)(1+A)\frac{b}{n}}{\sqrt{\beta_1^2(n\lambda_{l_1}-s_{l_1})(1+A)\frac{a}{n}\left(1-(1+A)\frac{a}{n}\right)+\beta_2^2(n(1-\lambda_{l_1})-\sum_{j\neq l_1}s_j)(1+A)\frac{b}{n}\left(1-(1+A)\frac{b}{n}\right)}}\\
		&=\left(\sqrt{2r}-\sqrt{C^*\bar{\rho}_{l_1}}\right)(1+o(1))\sqrt{\log{n}}. \label{eqn:across_signal_cov_exponent_s1}
		\ee

		Similarly $\beta_1d_{i'}(1,\tilde{S}_{l_2})+\beta_2d_{i'}'(2,\tilde{S}_{l_2})\sim \sum\limits_{t=1}^4 Z_t^{(l_2)}$ with independent components
		\be 
		Z_1^{(l_2)}\sim \Bin\left(s_{l_2}-1,(1+A)^2\frac{a}{n}\right),\\
		Z_2^{(l_2)}\sim \Bin\left(n\lambda_{l_2}-s_{l_2},(1+A)\frac{a}{n}\right),\\
		Z_3^{(l_2)}\sim \Bin\left(n(1-\lambda_{l_2})-\sum_{j\neq l_2}s_j,(1+A)\frac{b}{n}\right),\\
		Z_4^{(l_2)}\sim \Bin\left(\sum_{j\neq l_2}s_j-1,(1+A)^2\frac{b}{n}\right),\label{eqn:acrosssignal_bin_sim_s2}
		\ee
		and note that
		
		\be 
		\ & \frac{t\sigmabetanullltwo+\mubetanullltwo-\beta_1(n\lambda_{l_2}-s_{l_2})(1+A)\frac{a}{n}-\beta_2(n(1-\lambda_{l_2})-\sum_{j\neq l_2}s_j)(1+A)\frac{b}{n}}{\sqrt{\beta_1^2(n\lambda_{l_2}-s_{l_2})(1+A)\frac{a}{n}\left(1-(1+A)\frac{a}{n}\right)+\beta_2^2(n(1-\lambda_{l_2})-\sum_{j\neq l_2}s_j)(1+A)\frac{b}{n}\left(1-(1+A)\frac{b}{n}\right)}}\\
		&=\left(\sqrt{2r}-\sqrt{C^*\bar{\rho}_{l_2}}\right)(1+o(1))\sqrt{\log{n}}. \label{eqn:across_signal_cov_exponent_s2}
		\ee
		
		Therefore, applying  Lemma \ref{lemma:binomial_master} Part (a, ii) along with \eqref{eqn:signal_cov_within_block},
		\eqref{eqn:signal_lclt}, \eqref{eqn:signal_bin_sim},	\eqref{eqn:signal_cov_exponent},
		\eqref{eqn:signal_cov_across_block},
		\eqref{eqn:across_signal_lclt},
		\eqref{eqn:acrosssignal_bin_sim_s1},
		\eqref{eqn:across_signal_cov_exponent_s1},
		\eqref{eqn:acrosssignal_bin_sim_s2},
		\eqref{eqn:across_signal_cov_exponent_s2}, we have for any fixed $\varepsilon>0$ that the desired control of $T_3$ holds.

		\paragraph*{\textbf{Control of $T_4$}}
		
		The analysis of $T_4$ is similar in philosophy to that of $T_3$ and goes through a reduction to supremum of local central limit theorem type probability estimates for linear combination of independent Binomial random variables. However, since we need to control a similar term in the proof of Theorem \ref{thm:max_deg_null}, we present the control of $T_4$ below.

		First consider $\mathrm{Cov}^{(\C)}_{\mathbf{\Theta},a,b}\left(\I\left(D_i(\tilde{S}_l,\beta_1,\beta_2)>t\right),\I\left(D_{i'}(\tilde{S}_l,\beta_1,\beta_2)>t\right)\right)$ for $i\neq i'\in \tilde{S}_l\cap S^c$ for some $l=1,\ldots,k$. Then
		\be 
		\mathrm{Cov}^{(\C)}_{\mathbf{\Theta},a,b}\left(\I\left(D_i(\tilde{S}_l,\beta_1,\beta_2)>t\right),\I\left(D_{i'}(\tilde{S}_l,\beta_1,\beta_2)>t\right)\right)&=b^{(s_l^c)}(t)-(a^{(s_l^c)}(t))^2,
		\ee
		where for $i,i'\in \tilde{S}_l\cap S^c$
		\be 
		b^{(s_l^c)}(t)&=\Ptheta^{(\C)}(D_i(\hcparl)>t,D_{i'}(\hcparl)>t),\\
		a^{(s_l)}(t)&=\Ptheta^{(\C)}\left(D_i(\hcparl)>t\right).
		\ee
		Now note that
		\be 
		b^{(s_l^c)}(t)&=\frac{a}{n} (a^{(s_l^c)'}(t) )^2+\left(1-\frac{a}{n}\right)(a^{(s_l^c)''}(t))^2,\\
		a^{(s_l^c)}(t)&=\frac{a}{n} (a^{(s_l^c)'}(t) )+\left(1-\frac{a}{n}\right)(a^{(s_l^c)''}(t)),
		\ee
		where
		where
		\be 
		a^{(s_l^c)'}(t)&=\Ptheta^{(\C)}\left(\beta_1d_i'(1,\tilde{S}_l)+\beta_2d_i(2,\tilde{S}_l)>t\sigmabetanulll+\mubetanulll-\beta_1\right),\\
		a^{(s_l^c)''}(t)&=\Ptheta^{(\C)}\left(\beta_1d_i'(1,\tilde{S}_l)+\beta_2d_i(2,\tilde{S}_l)>t\sigmabetanulll+\mubetanulll\right), \label{eqn:aprimes_nonsignal}
		\ee
		where $d_i'(1,\tilde{S}_l)=\sum\limits_{t\in S_l:\atop t\neq i'}Y_{it}$. Therefore using Lemma \ref{lemma_simple_identity} for $l \in \{1,2\}$ we have
		\be
		(b^{(s_l^c)}(t)-(a^{(s_l^c)}(t))^2)&=\frac{a}{n}\left(1-\frac{a}{n}\right)\left(a^{(s_l^c)'}(t)-a^{(s_l^c)''}(t)\right)^2.\\ \label{eqn:nonsignal_cov_within_block}
		\ee
		
		Now note that for $l \in \{1,\ldots,k\}$ 
		\be 
		\ & \left(a^{(s_l^c)'}(t)-a^{(s_l^c)''}(t)\right)\\
		&=\Ptheta^{(\C)}\left(\beta_1d_i'(1,\tilde{S}_l)+\beta_2d_i(2,\tilde{S}_l)>t\sigmabetanulll+\mubetanulll-\beta_1\right)\\
		&-\Ptheta^{(\C)}\left(\beta_1d_i'(1,\tilde{S}_l)+\beta_2d_i(2,\tilde{S}_l)>t\sigmabetanulll+\mubetanulll\right)\\
		&\leq const\cdot \sup_{|\xi|\leq \beta_1}\Ptheta^{(\C)}\left(\beta_1d_i'(1,\tilde{S}_l)+\beta_2d_i(2,\tilde{S}_l)=t\sigmabetanulll+\mubetanulll+\xi\right),\\ \label{eqn:nonsignal_lclt}
		\ee
		where $const$ depends only on $\beta_1,\beta_2$. Now $\beta_1d_i'(1,\tilde{S}_l)+\beta_2d_i(2,\tilde{S}_l)\sim \sum\limits_{t=1}^4 Z_t^{(l)}$ with independent components
		\be 
		Z_1^{(l)}\sim \Bin\left(s_l,(1+A)\frac{a}{n}\right),\\
		Z_2^{(l)}\sim \Bin\left(n\lambda_l-s_l-2,\frac{a}{n}\right),\\
		Z_3^{(l)}\sim \Bin\left(n\lambda_l-\sum_{j\neq l}s_j,\frac{b}{n}\right),\\
		Z_4^{(l)}\sim \Bin\left(\sum_{j\neq l}s_j,(1+A)\frac{b}{n}\right).\label{eqn:nonsignal_bin_sim}
		\ee
		Further note that
		\be 
		\ & \frac{t\sigmabetanulll+\beta_1\muonenull(\tilde{S}_l)+\beta_2\mutwonull(\tilde{S}_l)-\beta_1(n\lambda_l-s_l-2)\frac{a}{n}-\beta_2(n(1-\lambda_l)-\sum_{j\neq l}s_j)\frac{b}{n}}{\sqrt{\beta_1^2(n\lambda_l-s_l-2)\frac{a}{n}\left(1-\frac{a}{n}\right)+\beta_2^2(n(1-\lambda_l)-\sum_{j\neq l}s_j)\frac{b}{n}\left(1-\frac{b}{n}\right)}}\\
		&=\sqrt{2r}(1+o(1))\sqrt{\log{n}}.\label{eqn:nonsignal_cov_exponent}
		\ee

		Next consider $\mathrm{Cov}^{(\C)}_{\mathbf{\Theta},a,b}\left(\I\left(D_i(\tilde{S}_{l_1},\beta_1,\beta_2)>t\right),\I\left(D_{i'}(\tilde{S}_{l_2},\beta_1,\beta_2)>t\right)\right)$ for $i\in \tilde{S}_{l_1}\cap S^c$ and $i'\in \tilde{S}_{l_2}\cap S^c$ for some $l_1\neq l_2\in\{1,\ldots,k\}$. Then
		\be 
		\mathrm{Cov}^{(\C)}_{\mathbf{\Theta},a,b}\left(\I\left(D_i(\tilde{S}_{l_1},\beta_1,\beta_2)>t\right),\I\left(D_{i'}(\tilde{S}_{l_2},\beta_1,\beta_2)>t\right)\right)&=b^{(s_{l_1}^c,s_{l_2}^c)}(t)-a^{(s_{l_1}^c)}(t)a^{(s_{l_2}^c)}(t),
		\ee
		where
		\be 
		b^{(s_{l_1}^c,s_{l_2}^c)}&=\Ptheta^{(\C)}(D_i(\hcparlone)>t,D_{i'}(\hcparltwo)>t),i\in \tilde{S}_{l_1}\cap S, i'\in \tilde{S}_{l_2}\cap S\\
		a^{(s_l^c)}(t)&=\Ptheta^{(\C)}\left(D_i(\hcparl)>t\right).
		\ee
		Now note that
		\be
		b^{(s_{l_1}^c,s_{l_2}^c)}(t)&=\frac{b}{n} (a^{(s_{l_1},s_{l_2})'}(t)a^{(s_{l_2},s_{l_1})'}(t) )+\left(1-\frac{b}{n}\right)(a^{(s_{l_1},s_{l_2})''}(t)a^{(s_{l_2},s_{l_1})''}(t) ),
		\ee
		where
		\be 
		a^{(s_{l_1}^c,s_{l_2}^c)'}(t)&=\Ptheta^{(\C)}\left(\beta_1d_i(1,\tilde{S}_{l_1})+\beta_2d_i'(2,\tilde{S}_{l_1})>t\sigmabetanulllone+\mubetanulllone-\beta_2\right),\\
		a^{(s_{l_1}^c,s_{l_2}^c)''}(t)&=\Ptheta^{(\C)}\left(\beta_1d_i(1,\tilde{S}_{l_1})+\beta_2d_i'(2,\tilde{S}_{l_1})>t\sigmabetanulllone+\mubetanulllone\right),\\
		a^{(s_{l_2}^c,s_{l_1}^c)'}(t)&=\Ptheta^{(\C)}\left(\beta_1d_{i'}(1,\tilde{S}_{l_2})+\beta_2d_{i'}'(2,\tilde{S}_{l_2})>t\sigmabetanullltwo+\mubetanullltwo-\beta_2\right),\\
		a^{(s_{l_2}^c,s_{l_1}^c)''}(t)&=\Ptheta^{(\C)}\left(\beta_1d_{i'}(1,\tilde{S}_{l_2})+\beta_2d_{i'}'(2,\tilde{S}_{l_2})>t\sigmabetanullltwo+\mubetanullltwo\right),\\ \label{eqn:aprimes_cross_nonsignal}
		\ee
		where we define $d_i'(2,\tilde{S}_{l_1})=\sum\limits_{t\in \tilde{S}_{l_1}^c:\atop t\neq i'}Y_{it}$ and $d_{i'}'(2,\tilde{S}_{l_2})=\sum\limits_{t\in \tilde{S}_{l_2}^c:\atop t\neq i}Y_{i't}$
		Therefore using Lemma \ref{lemma_simple_identity}
		\be 
		\ & (b^{(s_{l_1}^c,s_{l_2}^c)}(t)-a^{(s_{l_1}^c)}(t)a^{(s_{l_2}^c)}(t))\\&=\frac{b}{n}\left(1-\frac{b}{n}\right)\left(a^{(s_{l_1}^c,s_{l_2}^c)'}(t)-a^{(s_{l_1}^c,s_{l_2}^c)''}(t)\right)\left(a^{(s_{l_2}^c,s_{l_1}^c)'}(t)-a^{(s_{l_2}^c,s_{l_1}^c)''}(t)\right).\\\label{eqn:nonsignal_cov_across_block}
		\ee
		Similar to before
		\be 
		\ &\left(a^{(s_{l_1}^c,s_{l_2}^c)'}(t)-a^{(s_{l_1}^c,s_{l_2}^c)''}(t)\right)
		\\&\leq const\cdot \sup_{|\xi|\leq \beta_1}\Ptheta^{(\C)}\left(\beta_1d_i(1,\tilde{S}_{l_1})+\beta_2d_i'(2,\tilde{S}_{l_1})=t\sigmabetanulllone+\mubetanulllone+\xi\right),\\
		\ & \left(a^{(s_{l_2}^c,s_{l_1}^c)'}(t)-a^{(s_{l_2}^c,s_{l_1}^c)''}(t)\right)\\
		&\leq const\cdot \sup_{|\xi|\leq \beta_1}\Ptheta^{(\C)}\left(\beta_1d_{i'}(1,\tilde{S}_{l_2})+\beta_2d_{i'}'(2,\tilde{S}_{l_2})=t\sigmabetanullltwo+\mubetanullltwo+\xi\right). \\ \label{eqn:across_nonsignal_lclt}
		\ee
		where $const$ depends only on $\beta_1,\beta_2$.
		
		Now $\beta_1d_i(1,\tilde{S}_{l_1})+\beta_2d_i'(2,\tilde{S}_{l_1})\sim \sum\limits_{t=1}^4 Z_t^{(l_1)}$ with independent components
		\be 
		Z_1^{(l_1)}\sim \Bin\left(s_{l_1},(1+A)\frac{a}{n}\right),\\
		Z_2^{(l_1)}\sim \Bin\left(n\lambda_{l_1}-s_{l_1}-1,\frac{a}{n}\right),\\
		Z_3^{(l_1)}\sim \Bin\left(n(1-\lambda_{l_1})-\sum_{j\neq l_1}s_j,\frac{b}{n}\right),\\
		Z_4^{(l_1)}\sim \Bin\left(\sum_{j\neq l_1}s_j-1,(1+A)\frac{b}{n}\right),\label{eqn:acrossnonsignal_bin_sim_s1}
		\ee
		and note that
		
		\be 
		\ & \frac{t\sigmabetanulllone+\mubetanulllone-\beta_1(n\lambda_{l_1}-s_{l_1}-1)\frac{a}{n}-\beta_2(n(1-\lambda_{l_1})-\sum_{j\neq l_1}s_j)\frac{b}{n}}{\sqrt{\beta_1^2(n\lambda_{l_1}-s_{l_1}-1)\frac{a}{n}\left(1-\frac{a}{n}\right)+\beta_2^2(n(1-\lambda_{l_1})-\sum_{j\neq l_1}s_j)\frac{b}{n}\left(1-\frac{b}{n}\right)}}\\
		&=\sqrt{2r}(1+o(1))\sqrt{\log{n}}. \label{eqn:across_nonsignal_cov_exponent_s1}
		\ee

		Similarly $\beta_1d_{i'}(1,\tilde{S}_{l_2})+\beta_2d_{i'}'(2,\tilde{S}_{l_2})\sim \sum\limits_{t=1}^4 Z_t^{(l_2)}$ with independent components
		\be 
		Z_1^{(l_2)}\sim \Bin\left(s_{l_2},(1+A)\frac{a}{n}\right),\\
		Z_2^{(l_2)}\sim \Bin\left(n\lambda_{l_2}-s_{l_2}-1,\frac{a}{n}\right),\\
		Z_3^{(l_2)}\sim \Bin\left(n(1-\lambda_{l_2})-\sum_{j\neq l_2}s_j,\frac{b}{n}\right),\\
		Z_4^{(l_2)}\sim \Bin\left(\sum_{j\neq l_2}s_j-1,(1+A)\frac{b}{n}\right),\label{eqn:acrossnonsignal_bin_sim_s2}
		\ee
		and note that
		
		\be 
		\ & \frac{t\sigmabetanullltwo+\mubetanullltwo-\beta_1(n\lambda_{l_2}-s_{l_2}-1)\frac{a}{n}-\beta_2(n(1-\lambda_{l_2})-\sum_{j\neq l_2}s_j)\frac{b}{n}}{\sqrt{\beta_1^2(n\lambda_{l_2}-s_{l_2}-1)\frac{a}{n}\left(1-\frac{a}{n}\right)+\beta_2^2(n(1-\lambda_{l_2})-\sum_{j\neq l_2}s_j)\frac{b}{n}\left(1-\frac{b}{n}\right)}}\\
		&=\sqrt{2r}(1+o(1))\sqrt{\log{n}}. \label{eqn:across_nonsignal_cov_exponent_s2}
		\ee
		
		Therefore, applying  Lemma \ref{lemma:binomial_master} Part (a, ii) along with \eqref{eqn:nonsignal_cov_within_block},
		\eqref{eqn:nonsignal_lclt}, \eqref{eqn:nonsignal_bin_sim},	\eqref{eqn:nonsignal_cov_exponent},
		\eqref{eqn:nonsignal_cov_across_block},
		\eqref{eqn:across_nonsignal_lclt},
		\eqref{eqn:acrossnonsignal_bin_sim_s1},
		\eqref{eqn:across_nonsignal_cov_exponent_s1},
		\eqref{eqn:acrossnonsignal_bin_sim_s2},
		\eqref{eqn:across_nonsignal_cov_exponent_s2}, we have for any fixed $\varepsilon>0$ that the desired control of $T_3$ holds.

		\paragraph*{\textbf{Control of $T_5$}} The analysis of $T_5$ is similar in philosophy to those of $T_3$ and $T_4$, and goes through a reduction to supremum of local central limit theorem type probability estimates for linear combination of independent Binomial random variables. We therefore omit the details.

	\end{proof}

	\appendix
	
	\section{Proofs of Binomial Deviation Bounds}	
	\label{sec:binom_proofs}
	Throughout we let $\tau_{a'}=\lim a'/n$ and $\tau_{b'}=\lim b'/n$ and let $M=\sup_{n\geq 1}\max\{|\tau_{a'}-a'/n|,|\tau_{b'}-b'/n|,|C_n-C|\}$.
	
	\subsection{Proof of Lemma \ref{lemma:binomial_master}}We prove each part of the lemma separately below.\\
	\textit{Proof of Lemma  \ref{lemma:binomial_master}  Part \ref{lemma:binomial_equal_pure}:}
	Let $$h^*=\frac{h(\beta_1\sigmaone^2)}{\sigmabeta^2},$$ 
	and 
	\be
	\A:=\{x:\muone+x/\beta_1\in\mathbb{N}\cap [0,\gamma_1 n/k]\}.
	\ee
	Then we have for any $C^*>0$
	\be 
	\ & \sup_{|t|\leq \xi_n}\P\left(d(\beta_1,\beta_2)=h+t\right)\\
	&=\sup_{|t|\leq \xi_n}\sum\limits_{h_1 \in \A}\P(X=\muone+h_1/\beta_1)\P(Y=\mutwo+(h-h_1+t)/\beta_2)\\
	&=\sup_{|t|\leq \xi_n}\sum\limits_{h_1 \in \atop\A \cap I(C^*,h^*)^c }\P(X=\muone+h_1/\beta_1)\P(Y=\mutwo+(h-h_1+t)/\beta_2)\\&+\sup_{|t|\leq \xi_n}\sum\limits_{h_1 \in \atop\A \cap I(C^*,h^*)}\P(X=\muone+h_1/\beta_1)\P(Y=\mutwo+(h-h_1+t)/\beta_2),
	\ee
	where $I(C^*,h^*):=[h^*-C^*\sigmaone\sqrt{\log{n}},h^*+C^*\sigmaone\sqrt{\log{n}}]$ and $I(C^*,h^*)^c$ denotes its complement. Now
	\be 
	\ & \sup_{|t|\leq \xi_n}\sum\limits_{h_1 \in \atop\A \cap I(C^*,h^*)^c }\P(X=\muone+h_1/\beta_1)\P(Y=\mutwo+(h-h_1+t)/\beta_2)\\
	&\leq \sup_{|t|\leq \xi_n}\P\left(Y>\mutwo+(h-h^*+C^*\sigmaone\sqrt{\log{n}}+t)/\beta_2\right)\\
	&+\sup_{|t|\leq \xi_n}\P\left(X>\muone+(h^*+C^*\sigmaone\sqrt{\log{n}})/\beta_1\right).
	\ee
	Now by Lemma 6.2 of \cite{mms2016} Part (a, ii) 
	\be 
	\sup_{|t|\leq \xi_n}\P\left(X>\muone+(h^*+C^*\sigmaone\sqrt{\log{n}})/\beta_1\right)&\leq n^{-\frac{\kappa_1^2(C^*)}{2}+o(1)},
	\ee 
	where \textcolor{black}{$\kappa_1(C^*)=\frac{C^*+c\beta_1^2\sqrt{\frac{\frac{\gamma_1}{k}\tau_{a'}(1-\tau_{a'})}{\beta_1^2\frac{\gamma_1}{k}\tau_{a'}(1-\tau_{a'})+\beta_2^2\frac{\gamma_2}{k}\tau_{b'}(1-\tau_{b'})}}}{\beta_1}$ since $c<\liminf{\frac{h}{\sigmabeta\sqrt{\log{n}}}}$}.
	Again by Lemma 6.2 of \cite{mms2016} Part (a, ii) we have for $|t|\leq \xi_n \ll \log{n}$
	
	\be 
	\sup_{|t|\leq \xi_n}\P\left(Y>\mutwo+(h-h^*+C^*\sigmaone\sqrt{\log{n}}+t)/\beta_2\right)&\leq n^{-\frac{\kappa_2^2(C^*)}{2}+o(1)},
	\ee 
	\textcolor{black}{where $\kappa_2(C^*)=\frac{C^*\sqrt{\frac{\gamma_1\tau_{a'}(1-\tau_{a'})}{\gamma_2\tau_{b'}(1-\tau_{b'})}}+c\beta_2^2\sqrt{\frac{\gamma_2\tau_{b'}(1-\tau_{b'})}{\beta_1^2\gamma_1\tau_{a'}(1-\tau_{a'})+\beta_2^2\gamma_2\tau_{b'}(1-\tau_{b'})}}}{\beta_2}$.}\\
	
	Now by Theorem 1.2 of \cite{bollobas}, whenever $|t|\leq \xi_n \ll \log{n} $, one has for any fixed $\varepsilon\in (0,1)$ and $n$ large enough (depending on $\varepsilon,\beta_1,\beta_2,C^*,c,c',M$) 
	\be 
	\ & \sup_{|t|\leq \xi_n}\sum\limits_{h_1 \in \atop\A \cap I(C^*,h^*)}\P(X=\muone+h_1/\beta_1)\P(Y=\mutwo+(h-h_1+t)/\beta_2)\\
	&\leq \sum\limits_{h_1 \in \atop\A \cap I(C^*,h^*)}\frac{1}{\sqrt{2\pi \sigmaone^2}}\exp\left(-\frac{h_1^2}{2(\beta_1\sigmaone)^2}(1-\varepsilon)\right)\times \frac{1}{\sqrt{2\pi \sigmatwo^2}}\exp\left(-\frac{(h-h_1)^2}{2(\beta_2\sigmatwo)^2}(1-\varepsilon)\right).
	\ee
	Since the function $f(h_1)=\frac{h_1^2}{2(\beta_1\sigmaone)^2}+\frac{(h-h_1)^2}{2(\beta_2\sigmatwo)^2}$ is minimized at $h_1=h^*$, we have
	\be 
	\ & \sup_{|t|\leq \xi_n}\sum\limits_{h_1 \in \atop\A \cap I(C^*,h^*)}\P(X=\muone+h_1/\beta_1)\P(Y=\mutwo+(h-h_1+t)/\beta_2)\\
	&\leq |\A \cap I(C^*,h^*)|\frac{1}{2\pi \sigmaone\sigmatwo}\exp\left(-\frac{h^2}{2\sigmabeta^2}(1-\varepsilon)\right) \label{eqn:lclt_intermsof_h}\\
	\ee
	Therefore for any given sequence $\{\xi_n\}$ such that $|\xi_n|\ll \log{n}$
	\be 
	\ & \sup\limits_{|t|\ll \xi_n }\P\left(d(\beta_1,\beta_2)=h+t\right)\\
	&\leq |\A \cap I(C^*,h^*)|\frac{1}{2\pi \sigmaone\sigmatwo}\exp\left(-\frac{h^2}{2\sigmabeta^2}(1-\varepsilon)\right)+ n^{-\frac{\kappa_1^2(C^*)}{2}+o(1)}+n^{-\frac{\kappa_2^2(C^*)}{2}+o(1)}. \label{eqn:dbeta_control}
	\ee
	Now note that $|\A \cap I(C^*,h^*)|\leq const\cdot \sigmaone\sqrt{\log{n}}$ for a constant $const$. (depending on $C^*,c,c',\beta_1,\beta_2,M$) and $\kappa_1^2(C^*),\kappa_2^2(C^*)$ are increasing function of $C^*$. The proof is therefore complete by choosing $C^*$ large enough constant (depending on $c,c',\beta_1,\beta_2,M$).\\

	\textit{Proof of Lemma  \ref{lemma:binomial_master}  Part \ref{lemma:binomial_equal_contam}:}
	For any sequence $\{\delta_n\}$ let $t_n(\delta_n)=\delta_n\sigmabeta\sqrt{\log{n}}$. We make use of the following lemma, the proof of which being simple is omitted.
	\begin{lemma}\label{lemma:delta_choice}
		There exists a positive sequence $\delta_n^*\rightarrow 0$ such that the following hold.
		\begin{enumerate}
			\item[(i)]$ 
			t_n(\delta_n^*)\wedge {\delta_n^*}\left(\beta_1\wedge\beta_2\right)\times \sqrt{a' \wedge b'}\times \sqrt{\log{n}}\gg \log{n}$.
			\item [(ii)]$
			\frac{\sqrt{\log{n}}}{\delta_n^*}\frac{(s_1\vee s_2)\times (a''\vee b'')}{n\sqrt{a'\wedge b'}}\rightarrow 0$.
		\end{enumerate}
		
	\end{lemma}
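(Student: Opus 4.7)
The plan is to recast each of the two displayed conditions as an asymptotic lower bound on $\delta_n^*$, verify that both lower bounds tend to $0$, and then exhibit a vanishing sequence dominating both.

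I would first pin down the orders of magnitude of the quantities involved. Because $\beta_1,\beta_2$ are fixed positive constants and $0<c\leq \liminf b'/a'\leq \limsup b'/a'\leq 1$, with $a'/n$ bounded away from $1$ (which is inherited from the paper's standing assumption $a\leq n/2$), we have $\sigma_n(\beta_1,\beta_2)=\sqrt{\beta_1^2\sigma_{n1}^2+\beta_2^2\sigma_{n2}^2}\asymp \sqrt{a'}\asymp \sqrt{b'}$, with implicit constants depending only on $c,\beta_1,\beta_2$. Hence $t_n(\delta_n^*)\asymp \delta_n^*\sqrt{a'\log n}$, while the second argument of the wedge in (i) is $\asymp \delta_n^*\sqrt{b'\log n}$. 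Since $b'\leq a'$, the wedge is of order $\delta_n^*\sqrt{b'\log n}$, and condition (i) collapses to $\delta_n^*\gg \sqrt{\log n/b'}$. This lower bound is $o(1)$ because $b'\gg \log n$ by assumption.

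Next I would apply the same reduction to condition (ii). Using $s_1\vee s_2\leq n^{1-\alpha}$, $a''\vee b''\asymp a'$ (because $a''/a'\to 1$ and $b''/b'\to 1$ together with $b'\asymp a'$), and $\sqrt{a'\wedge b'}\asymp \sqrt{a'}$, condition (ii) reduces to $\delta_n^*\gg n^{-\alpha}\sqrt{a'\log n}$. Since $a'\leq n$ and $\alpha>1/2$, the right-hand side is bounded above by $n^{1/2-\alpha}\sqrt{\log n}$, which is also $o(1)$.

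Finally, I would take
\begin{equation*}
\delta_n^* := \omega_n\cdot \max\Big\{\sqrt{\log n/b'},\; n^{-\alpha}\sqrt{a'\log n}\Big\}
\end{equation*}
for any sequence $\omega_n\to\infty$ chosen slowly enough that the whole product still tends to $0$; this is possible because both arguments of the maximum are $o(1)$. By construction, this $\delta_n^*$ simultaneously satisfies (i) and (ii). The entire argument is a routine verification once the asymptotic orders of $\sigma_n(\beta_1,\beta_2), a', b'$ are pinned down; I do not expect any substantive obstacle beyond careful bookkeeping of the competing powers of $a', b', n$, and $\log n$.
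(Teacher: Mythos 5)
The paper omits the proof of this lemma (``the proof of which being simple is omitted''), so there is no official argument to compare against; your job is to supply the routine verification, which you do correctly. You correctly identify that each condition is equivalent to an asymptotic lower bound on $\delta_n^*$: from $\sigma_n(\beta_1,\beta_2)\asymp\sqrt{a'}\asymp\sqrt{b'}$, condition (i) reduces to $\delta_n^*\gg\sqrt{\log n/b'}$, which is $o(1)$ because $b'\gg\log n$; and condition (ii) reduces to $\delta_n^*\gg n^{-\alpha}\sqrt{a'\log n}$, which is $o(1)$ because $a'\leq n$ and $\alpha>1/2$. Taking $\delta_n^*=\omega_n\max\{\sqrt{\log n/b'},\,n^{-\alpha}\sqrt{a'\log n}\}$ with $\omega_n\to\infty$ slowly (e.g.\ $\omega_n$ the inverse square root of the maximum) gives a valid choice, and your verification that both conditions then hold (the wedge in (i) is $\gtrsim\omega_n\log n\gg\log n$; the expression in (ii) is $O(1/\omega_n)\to 0$ after using $s_1\vee s_2\leq n^{1-\alpha}$, $a''\vee b''\lesssim a'$, and $\sqrt{b'}\gtrsim\sqrt{a'}$) is correct. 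The one point you should flag explicitly is that the claim $\sigma_n(\beta_1,\beta_2)\asymp\sqrt{a'}$ requires $\tau_{a'}<1$, which is not stated in the Section~5 setup as an explicit hypothesis on $a'$; the lemma would in fact fail if $a'/n\to 1$ fast enough to make $\sigma_{n1}$ bounded. You note that this is inherited from the context $a\leq n/2$, and indeed in every invocation of Lemma~\ref{lemma:binomial_master} the sequences $a',b'$ are asymptotically equivalent to $a,b$ or $a(1+A),b(1+A)$ with $A\to 0$ and $a\leq n/2$, so the assumption is justified in context --- but it deserves to be stated as part of the lemma's hypotheses rather than tacitly assumed.
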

	Fix a sequence $\delta_n^*$ satisfying $(i)$ and $(ii)$ of Lemma \ref{lemma:delta_choice}. Then 
	\be 
	\ & \sup\limits_{|t|\leq \xi_n}\P\left(d'(\beta_1,\beta_2)=h+t\right)\\
	& \leq \sup\limits_{|t|\leq \xi_n}\left\{\begin{array}{c}\P\left(\beta_1 X'+\beta_2Y'>t_n(\delta_n^*)\right)\\+\sup\limits_{r\in [0,t_n(\delta_n^*)]}\P\left(d(\beta_1,\beta_2)=h+t-r\right)\end{array}\right\}\\
	&=\P\left(\beta_1 X'+\beta_2Y'>t_n(\delta_n^*)\right)+\sup\limits_{|t|\leq \xi_n,\atop r\in [0,t_n(\delta_n^*)]}\P\left(d(\beta_1,\beta_2)=h+t-r\right).
	\ee
	For the first term we have
	\be 
	\ & \P\left(\beta_1 X'+\beta_2Y'>t_n(\delta_n^*)\right)\\
	&=\P\left(\beta_1 (X'-s_1a''/n)+\beta_2(Y'-s_2b''/n)>t_n(\delta_n^*)-(\beta_1s_1a''/n+\beta_2s_2b''/n)\right).
	\ee
	Now by Lemma \ref{lemma:delta_choice}
	\be 
	\ & t_n(\delta_n^*)-(\beta_1s_1a''/n+\beta_2s_2b''/n)\\
	&\geq \frac{\delta_n^*}{2}\left(\beta_1\wedge\beta_2\right)\times \sqrt{a' \wedge b'}\times \sqrt{\log{n}}-\frac{2}{n}(\beta_1 \vee\beta_2)\times (s_1\vee s_2)\times (a''\vee b'')\\
	&=\frac{\delta_n^*}{2}\left(\beta_1\wedge\beta_2\right)\times \sqrt{a' \wedge b'}\times \sqrt{\log{n}}\left(1-2\frac{\sqrt{\log{n}}}{\delta_n^*}\frac{(s_1\vee s_2)\times (a''\vee b'')}{n\sqrt{a'\wedge b'}}\right)\\
	&\gg \log{n}.
	\ee
	Therefore by Bernstein's Inequality for $\theta>0$ one has for $n$ large enough (depending on $\theta,\beta_1,\beta_2$)  
	\be 
	\P\left(\beta_1 X'+\beta_2Y'>t_n(\delta_n^*)\right)&\leq n^{-\theta}.
	\ee
	Finally by Lemma \ref{lemma:delta_choice} and \eqref{eqn:dbeta_control} 
	\be 
	\ &\sup\limits_{|t|\leq \xi_n,\atop r\in [0,t_n(\delta_n^*)]}\P\left(d(\beta_1,\beta_2)=h+t-r\right)\\
	&\leq |\A \cap I(C^*,h^*)|\frac{1}{2\pi \sigmaone\sigmatwo}\exp\left(-\frac{h^2}{2\sigmabeta^2}(1-\varepsilon)\right)+ n^{-\frac{\kappa_1^2(C^*)}{2}+o(1)}+n^{-\frac{\kappa_2^2(C^*)}{2}+o(1)},
	\ee
	for any $C^*>0$ and $\varepsilon \in (0,1)$, whenever  $|t|\leq \xi_n\ll \log{n} $ 
	
	Therefore for any given sequence $\{\xi_n\}$ such that $|\xi_n|\ll \log{n}$, any $C^*,\theta>0$, and $\varepsilon \in (0,1)$ we have for $n$ large enough (depending on $\varepsilon,c,c',\beta_1,\beta_2,C^*,\theta,M$)
	\be 
	\ & \sup\limits_{|t|\ll \xi_n }\P\left(d'(\beta_1,\beta_2)=h+t\right)\\
	&\leq n^{-\theta}+|\A \cap I(C^*,h^*)|\frac{1}{2\pi \sigmaone\sigmatwo}\exp\left(-\frac{h^2}{2\sigmabeta^2}(1-\varepsilon)\right)+ n^{-\frac{\kappa_1^2(C^*)}{2}+o(1)}+n^{-\frac{\kappa_2^2(C^*)}{2}+o(1)}.\\ \label{eqn:dbetaprime_control}
	\ee
	Now note that $|\A \cap I(C^*,h^*)|\leq const\cdot \sigmaone\sqrt{\log{n}}$ for a constant $const$. (depending on $C^*,c,c',\beta_1,\beta_2,M$) and $\kappa_1^2(C^*),\kappa_2^2(C^*)$ are increasing function of $C^*$. The proof is therefore complete by choosing $C^*$ and $\theta$ large enough constant (depending on $c,c',\beta_1,\beta_2,M$).\\
	%
	
	\textit{Proof of Lemma  \ref{lemma:binomial_master}  Part \ref{lemma:binomial_tail_pure}:} The proof proceeds by producing upper and lower bounds on the desired moderate deviation probability. 
	\subsection*{\textbf{Upper Bound}}
	

	For $h=C_n\sigmabeta\sqrt{\log{n}}$, any $\varepsilon>0$, and $\Delta_n>0$ one has 
	\be 
	P(d(\beta_1,\beta_2)>C_n\sigmabeta\sqrt{\log{n}})&= \sum_{l=0}^{M_n-1}\P(d(\beta_1,\beta_2)\in (h+l\Delta_n,h+(l+1)\Delta_n)),
	\ee
	where $M_n=\left(\frac{n(\gamma_1\beta_1+\gamma_2\beta_2)}{k}-\mubeta-h\right)/\Delta_n$. Fix  $B>\limsup C_n$ to be chosen later and let $m_n=B\sigmabeta\sqrt{\log{n}}/\Delta_n$. Then
	
	\be 
	\ & \sum_{l=0}^{m_n-1}\P(d(\beta_1,\beta_2)\in (h+l\Delta_n,h+(l+1)\Delta_n))\\
	&\leq \sum_{l=0}^{m_n-1}|\mathscr{H}\cap (h+l\Delta_n,h+(l+1)\Delta_n)|\sup_{t \in [0,\Delta_n]}\P(d(\beta_1,\beta_2)=h+l\Delta_n+t)
	\ee
	where 	$\mathscr{H}=\{x:\mubeta+x\in \beta_1\mathbb{N}+\beta_2\mathbb{N}\}$. \textcolor{black}{Now it is easy to see that $|\mathscr{H}\cap (h+l\Delta_n,h+(l+1)\Delta_n)|\leq \frac{\Delta_n^2}{\beta_1\beta_2}$.} Also by the choice of $m_n$, for any $\varepsilon>0$
	\be 
	\sup_{t \in [0,\Delta_n]}\P(d(\beta_1,\beta_2)=h+l\Delta_n+t)&\leq \frac{1}{\sqrt{2\pi}\sigmabeta}\exp\left(-\frac{(h+l\Delta_n)^2}{2\sigmabeta^2}(1-\varepsilon)^{1/2}\right).
	\ee
	Therefore as long as $\Delta_n$ is bounded we have by arguments similar to the proof of part (a, i)
	\be 
	\ & \sum_{l=0}^{m_n-1}\P(d(\beta_1,\beta_2)\in (h+l\Delta_n,h+(l+1)\Delta_n))\\
	&\leq \sum_{l=0}^{m_n-1}|\mathscr{H}\cap (h+l\Delta_n,h+(l+1)\Delta_n)|\sup_{t \in [0,\Delta_n]}\P(d(\beta_1,\beta_2)=h+l\Delta_n+t)\\
	&\leq \frac{\Delta_n^2}{\beta_1\beta_2}\sum_{l=0}^{m_n-1}\frac{1}{\sqrt{2\pi}\sigmabeta}\exp\left(-\frac{(h+l\Delta_n)^2}{2\sigmabeta^2}(1-\varepsilon)^{1/2}\right)\\
	&\leq \frac{\Delta_n^2}{\beta_1\beta_2}\int_{0}^{m_n}\frac{1}{\sqrt{2\pi}\sigmabeta}\exp\left(-\frac{(h+x\Delta_n)^2}{2\sigmabeta^2}(1-\varepsilon)^{1/2}\right)dx\\
	&=\frac{\Delta_n^2}{\beta_1\beta_2}\left(\bar{\Phi}\left(\frac{h(1-\varepsilon)^{1/2}}{\sigmabeta}\right)-\bar{\Phi}\left(\frac{(h+m_n\Delta_n)(1-\varepsilon)^{1/2}}{\sigmabeta}\right)\right)\\
	&=\frac{\Delta_n^2}{\beta_1\beta_2}\left(\bar{\Phi}\left(\frac{h(1-\varepsilon)^{1/2}}{\sigmabeta}\right)-\bar{\Phi}\left(\frac{(h+B\sigmabeta\sqrt{\log{n}})(1-\varepsilon)^{1/2}}{\sigmabeta}\right)\right).
	\ee
	Therefore if $\Delta_n$ is bounded
	\be 
	\ & P(d(\beta_1,\beta_2)>C_n\sigmabeta\sqrt{\log{n}})\\
	&= \sum_{l=0}^{m_n-1}\P(d(\beta_1,\beta_2)\in (h+l\Delta_n,h+(l+1)\Delta_n))\\&+\sum_{l=m_n}^{M_n}\P(d(\beta_1,\beta_2)\in (h+l\Delta_n,h+(l+1)\Delta_n))\\
	&\leq \frac{\Delta_n^2}{\beta_1\beta_2}\left(\bar{\Phi}\left(\frac{h(1-\varepsilon)^{1/2}}{\sigmabeta}\right)-\bar{\Phi}\left(\frac{(h+B\sigmabeta\sqrt{\log{n}})(1-\varepsilon)^{1/2}}{\sigmabeta}\right)\right)\\&+P(d(\beta_1,\beta_2)>h+B\sigmabeta\sqrt{\log{n}}).
	\ee
	It remains to control $P(d(\beta_1,\beta_2)>h+B\sigmabeta\sqrt{\log{n}})$ which we will do using a naive Bernstein bound. In particular we have by Bernstein's Inequality 
	\be 
	\ & P(d(\beta_1,\beta_2)>h+B\sigmabeta\sqrt{\log{n}})\\&\leq \exp\left(-\frac{1}{2}\frac{(h+B\sigmabeta\sqrt{\log{n}})^2}{\sigmabeta^2+\frac{1}{3}(\beta_1\vee\beta_2)(h+B\sigmabeta\sqrt{\log{n}})}\right).
	\ee
	Now note that $\sigmabeta^2\gg h+B\sigmabeta\sqrt{\log{n}}$. Therefore for sufficiently large $n$
	\be 
	\sigmabeta^2+\frac{1}{3}(\beta_1\vee\beta_2)(h+B\sigmabeta\sqrt{\log{n}})\leq 2\sigmabeta^2.
	\ee
	As a consequence for sufficiently large $n$
	\be 
	P(d(\beta_1,\beta_2)>h+B\sigmabeta\sqrt{\log{n}})&\leq \exp\left(-\frac{1}{4}(C_n+B)^2\log{n}\right).
	\ee
	The desired control of the upper bound is thereafter complete by choosing $B$ large enough depending on $\varepsilon>0$.
	\subsection*{\textbf{Lower Bound}}
	We first claim that for any $C_n\sigmabeta\sqrt{\log{n}}\leq h\ll b'$
	\be 
	\P(d(\beta_1,\beta_2)\in (h,h+3\beta_2))\geq \frac{1}{\sqrt{2\pi}\sigmabeta}\exp\left(-\frac{h^2}{2\sigmabeta^2}(1+\varepsilon)^{1/2}\right).\label{eqn:claim_binomial_tail_pure}
	\ee
	Deferring the proof of \eqref{eqn:claim_binomial_tail_pure}, we first finish the proof of the lower bound.  
	In view of the claim, for $t=C_n\sigmabeta\sqrt{\log{n}}$ and any $M_n\ll b'$ one has for any $\varepsilon$
	\be 
	\ & P(d(\beta_1,\beta_2)>C_n\sigmabeta\sqrt{\log{n}})\\&\geq \sum_{l=0}^{M_n}\P(d(\beta_1,\beta_2)\in (t+3l\beta_2,t+3(l+1)\beta_2))\\
	&\geq \sum_{l=0}^{M_n}\frac{1}{\sqrt{2\pi}\sigmabeta}\exp\left(-\frac{(t+3l\beta_2)^2}{2\sigmabeta^2}(1+\varepsilon)^{1/2}\right)\\
	&\geq \int_{0}^{M_n}\frac{1}{\sqrt{2\pi}\sigmabeta}\exp\left(-\frac{(t+3x\beta_2)^2}{2\sigmabeta^2}(1+\varepsilon)^{1/2}\right)dx\\
	&=\bar{\Phi}\left(\frac{t(1+\varepsilon)^{1/2}}{\sigmabeta}\right)-\bar{\Phi}\left(\frac{(t+3M_n\beta_2)(1+\varepsilon)^{1/2}}{\sigmabeta}\right).
	\ee
	Using Mill's ratio the proof of the lower bound is therefore complete by choosing $C_n\sigmabeta\sqrt{\log{n}}\ll M_n\ll b'$.

	We now complete the proof of the claim in \eqref{eqn:claim_binomial_tail_pure}. 
	
	The main idea of the proof is simple and relies on  finding $O(\sigmabeta)$ distinct pairs $(h_1,h_2)$ such that $\beta_1h_1+\beta_2h_2-\mubeta\in (h,h+3\beta_2)$  and  $P(X=h_1)P(Y=h_2)\geq \frac{1}{2\pi \sigmaone\sigmatwo}\exp\left(-\frac{h^2}{2\sigmabeta^2}(1+\varepsilon)\right)$. The proof can thereby be completed by adding over these contributing $O(\sigmabeta)$ distinct pairs $(h_1,h_2)$.
	
	For a fixed $\bar{h}> C_n\sigmabeta\sqrt{\log{n}}$ (to be decided on later), consider any
	$$C_n\sigmabeta\sqrt{\log{n}}\leq h\leq \bar{h} ,\quad h^*=\frac{h(\beta_1\sigmaone)^2}{\sigmabeta^2},$$
	and let for any $h_1>0$ such that $h_1=O(\sigma_{n1}\sigma_{n2}/\sigma_n(\beta_1,\beta_2))$
	$$h_1^*(h_1)=\left\lceil \muone+\frac{h^*+h_1}{\beta_1}\right\rceil,$$
	
	and 
	\be 
	t^*(h_1)=\beta_2\left\{\left\lceil \mutwo+\frac{h+\beta_1\left(\muone-h_1^*(h_1)\right)}{\beta_2}\right\rceil-\left(\mutwo+\frac{h+\beta_1\left(\muone-h_1^*(h_1)\right)}{\beta_2}\right)\right\}. \\\label{eqn:tstar_alt_difficult}
	\ee
	
	We will need the fact that $\left\lceil \mutwo+\frac{h+\beta_1\left(\muone-h_1^*(h_1)\right)}{\beta_2}\right\rceil\geq 0$ which is guaranteed by the next easy to show lemma whose proof is omitted.
	\begin{lemma}
		\textcolor{black}{For $n$ sufficiently large (depending on $\beta_1,\beta_2$) we have for $h_1=O(\sigma_{n1}\sigma_{n2}/\sigma_n(\beta_1,\beta_2))$ that $\mutwo+\frac{h+\beta_1\left(\muone-h_1^*(h_1)\right)}{\beta_2}\geq 0$.
		}
	\end{lemma}
	We can now proceed as follows. 
	Note that with $\delta_n(h_1)=\left\lceil \muone+\frac{h^*+h_1}{\beta_1}\right\rceil-(\muone+\frac{h^*+h_1}{\beta_1})$ and $\tilde{h}(h_1)=\frac{h^*+h_1}{\beta_1}\left(1+\frac{\beta_1\delta_n(h_1)}{h^*+h_1}\right)$ one has by Theorem 1.5 of \cite{bollobas}
	\be 
	\ & \P\left(X=h_1^*(h_1)\right)\\
	&=\P\left(X= \muone+\frac{h^*+h_1}{\beta_1}+\delta_n(h_1)\right)\\
	&=\P\left(X=\muone+\tilde{h}(h_1)\right)\\
	&\geq \frac{1}{\sqrt{2\pi p_1q_1n_1}}\exp\left(-\frac{\tilde{h}(h_1)^2}{2p_1q_1n_1}\left(\begin{array}{c}1+\frac{\tilde{h}(h_1)p_1}{q_1 n_1}+\frac{2q_1\tilde{h}(h_1)^2}{3p_1^2n_1^2}+\frac{q_1}{\tilde{h}(h_1)}\\+\left(\frac{1}{h_1^*(h_1)}+\frac{1}{n_1-h_1^*(h_1)}\right)\frac{n_1p_1q_1}{6\tilde{h}(h_1)^2}\end{array}\right)\right),
	\ee
	where $n_1=\gamma_1n/k$, $p_1=a'/n$ and $q_1=1-a'/n$. Now it is easy to see that
	\be 
	\frac{\tilde{h}(h_1)p_1}{q_1 n_1}=O\left(\frac{\tilde{h}(h_1)a'}{n^2}\right),\ 
	\frac{2q_1\tilde{h}(h_1)^2}{3p_1^2n_1^2}=O\left(\frac{\tilde{h}(h_1)^2}{(a')^2}\right),\
	\frac{q_1}{\tilde{h}(h_1)}=O\left(\frac{1}{\tilde{h}(h_1)}\right),\\
	\left(\frac{1}{h_1^*(h_1)}+\frac{1}{n_1-h_1^*(h_1)}\right)\frac{n_1p_1q_1}{6\tilde{h}(h_1)^2}=O\left(\frac{a'}{h_1^*(h_1)\tilde{h}(h_1)^2}\right),
	\ee
	where the $O$-notations involve universal constants free from $\beta_1,\beta_2,C_n$. If $\bar{h},h_1$ is such that,
	\be 
	\bar{h}\ll a', \quad h_1\leq \beta_1\sqrt{2\pi}\frac{\sigmaone\sigmatwo}{\sigmabeta} \label{eqn:condition_hbar_and_h_1},
	\ee
	then since $b' \gg \log{n}$, we have for any $\varepsilon>0$, sufficiently large $n$ (depending on $M$ and $\varepsilon>0$)
	\be 
	\P\left(X=h_1^*(h_1)\right)\geq \frac{1}{\sqrt{2\pi \sigmaone^2}}\exp\left(-\frac{(h^*)^2}{2(\beta_1 \sigmaone)^2}\left(1+\varepsilon\right)^{1/2}\right).\label{eqn:lowerbound_x_difficult}
	\ee
	
	Similarly for $n_2=\gamma_2n/k$, $p_2=b'/n$ and $q_2=1-b'/n$ and any $m \in \mathbb{N}$ one has
	\be 
	\ &\P\left(Y=\mutwo+\frac{h+t^*(h_1)+\beta_2m+\beta_1\left(\muone-h_1^*(h_1)\right)}{\beta_2}\right)\\
	&=\left(Y=\left\lceil \mutwo+\frac{h-(h^*+h_1)-\beta_1\delta_n(h_1)}{\beta_2}\right\rceil+m\right)\\
	&=\P\left(Y=\mutwo+{\tilde{h}}(m)\right),
	\ee
	where $\tilde{h}(m)=\frac{h-(h^*+h_1)}{\beta_2}\left(1-\frac{\frac{\beta_1}{\beta_2}\delta_n(h_1)-\beta_2m-\beta_2\delta_n'(h_1)}{h-(h^*+h_1)}\right)$ with $\delta_n'(h_1)=\left\lceil \mutwo+\frac{h-(h^*+h_1)-\beta_1\delta_n(h_1)}{\beta_2}\right\rceil-\left(\mutwo+\frac{h-(h^*+h_1)-\beta_1\delta_n(h_1)}{\beta_2}\right)$. Therefore, once again by Theorem 1.5 of \cite{bollobas}
	\be
	\ & \P\left(Y=\mutwo+{\tilde{h}}(m)\right)\\
	&\geq \frac{1}{\sqrt{2\pi p_2q_2n_2}}\exp\left(-\frac{\tilde{h}(m)^2}{2p_2q_2n_2}\left(\begin{array}{c}1+\frac{\tilde{h}(m)p_2}{q_2 n_2}+\frac{2q_2\tilde{h}(m)^2}{3p_2^2n_2^2}+\frac{q_2}{\tilde{h}(m)}\\+\left(\frac{1}{\mutwo+\tilde{h}(m)}+\frac{1}{n_1-\mutwo-\tilde{h}(m)}\right)\frac{n_2p_2q_2}{6\tilde{h}(m)^2}\end{array}\right)\right)
	\ee
	Now it is easy to see that
	\be 
	\frac{\tilde{h}(m)p_2}{q_2 n_2}=O\left(\frac{\tilde{h}(m)b'}{n^2}\right),\
	\frac{2q_2\tilde{h}(m)^2}{3p_2^2n_2^2}=O\left(\frac{\tilde{h}(m)^2}{(b')^2}\right),\
	\frac{q_2}{\tilde{h}(m)}=O\left(\frac{1}{\tilde{h}(m)}\right),\\
	\left(\frac{1}{\mutwo+\tilde{h}(m)}+\frac{1}{n_2-\mutwo-\tilde{h}(m)}\right)\frac{n_2p_2q_2}{6\tilde{h}(m)^2}=O\left(\frac{b'}{(\mutwo+\tilde{h}(m))\tilde{h}(m)^2}\right),
	\ee
	where the $O$-notations involve universal constants free from $\beta_1,\beta_2,C_n$. 
	
	If $\bar{h},h_1,m$ is such that,
	\be 
	\bar{h}\ll a', \quad  h_1\leq \beta_1\sqrt{2\pi}\frac{\sigmaone\sigmatwo}{\sigmabeta}, \quad m \leq \sigmatwo,\label{eqn:condition_hbar_and_h_2}
	\ee
	then since $b' \gg \log{n}$, we have for any $\varepsilon>0$, sufficiently large $n$ (depending on $M$ and $\varepsilon>0$)
	\be 
	\P\left(Y=\mutwo+{\tilde{h}}(m)\right)\geq \frac{1}{\sqrt{2\pi \sigmatwo^2}}\exp\left(-\frac{(h-h^*)^2}{2(\beta_2\sigmatwo)^2}\left(1+\varepsilon\right)^{1/2}\right).\label{eqn:lowerbound_y_difficult}
	\ee
	Combining \eqref{eqn:lowerbound_x_difficult} and \eqref{eqn:lowerbound_y_difficult}, we have that under the common conditions \eqref{eqn:condition_hbar_and_h_1}, \eqref{eqn:condition_hbar_and_h_2}
	\be 
	\P\left(X=h_1^*(h_1)\right)\P\left(Y=\mutwo+{\tilde{h}}(m)\right)&\geq \frac{1}{2\pi \sigmaone \sigmatwo}\exp\left(-\frac{h^2}{2\sigmabeta^2}(1+\varepsilon)^{1/2}\right).
	\ee
	\textcolor{black}{Now for each $h_1 \subseteq \left[0, \beta_1\sqrt{2\pi}\frac{\sigmaone\sigmatwo}{\sigmabeta}\right]\cap \beta_1\mathbb{N}$  the number of $h_1^*(h_1)$ is distinct and 
	}
	\be 
	\textcolor{black}{\beta_1 h_1^*(h_1)+\beta_2\mutwo+{\tilde{h}}(m)-\mubeta\in (h,h+3m\beta_2).}\quad  
	\ee
	\textcolor{black}{Therefore we can choose $m=1$ to complete the proof of claim \eqref{eqn:claim_binomial_tail_pure}.}\\

	\textit{Proof of Lemma  \ref{lemma:binomial_master}  Part \ref{lemma:binomial_tail_contam}:}
	Recall the proof of Part (a, ii) and Fix a sequence $\delta_n^*$ satisfying $(i)$ and $(ii)$ of Lemma \ref{lemma:delta_choice}. Then
	\be 
	\ & \P\left(d'(\beta_1,\beta_2)>C_n\sigmabeta\sqrt{\log{n}}\right)\\ 
	&\leq \P(\beta_1X'+\beta_2Y'>t_n(\delta_n^*))+\P(d(\beta_1,\beta_2)>C_n\sigmabeta\sqrt{\log{n}}-t_n(\delta_n^*))
	\ee
	where $t_n(\delta_n^*)=\delta_n^*\sigmabeta\sqrt{\log{n}^*}$. Now by our choice of $\delta_n^*$ we have $C_n\sigmabeta\sqrt{\log{n}}-t_n(\delta_n^*)=C_n(1+o(1))\sigmabeta\sqrt{\log{n}}$. Moreover, similar to the proof of Part (a,ii) , by Bernstein's Inequality for $\theta>0$ one has for $n$ large enough (depending on $\theta,\beta_1,\beta_2,\tau_a,\tau_b$)  
	\be 
	\P\left(\beta_1 X'+\beta_2Y'>t_n(\delta_n^*)\right)&\leq n^{-\theta}.
	\ee
	Therefore by Part (b, ii) we have by choosing $\theta>2C$
	\be 
	\P\left(d'(\beta_1,\beta_2)>C_n\sigmabeta\sqrt{\log{n}}\right)&\leq n^{-\frac{C^2}{2}+o(1)}.
	\ee
	The lower bound is trivial from Part (b, ii) since $d'(\beta_1,\beta_2)\geq d(\beta_1,\beta_2)$.
	\qed

	\subsection{Proof of Lemma \ref{lemma:binomial_tail_exp_scale}}
	Recall the definitions $\mu_{n1} = \E[X]$, $\mu_{n2} = \E[Y]$, $\sigma_{n1}^2 = \Var(X)$, $\sigma_{n2}^2 = \Var(Y)$ and $\mubeta=\E(\beta_1X+\beta_2Y)$ and $\sigmabeta=\var(\beta_1X+\beta_2Y)$. For brevity, we let $\mu_n=\mu_n(1,1)$ and $\sigma_n=\sigma_n(1,1)$. Let $\mathscr{H} = \{ h>0: \mu_n + h  \in \mathbb{N} \}$. Thus $\P[X+Y > \mu_n + \sigma_n x_n] = \sum_{ h \in \mathscr{H}: h > \sigma_n x_n} \P[X+ Y = \mu_n + h]$. Let $\mathscr{H}_1 = \{ h : \mu_{n1} + h \in \mathbb{N} \}$ and set $h_1^* = \inf\{ h \in \mathscr{H}_1 : h > \frac{\sigma_{n1}^2 }{\sigma_n} x_n\}$. Thus we have, for $h \in \mathscr{H}$, 
	\begin{align}
	\P[X+Y = \mu_n + h] &= \sum_{h_1 \in \mathscr{H}_1} \P[X = \mu_{n1} + h_1 ] \P[ Y = \mu_{n2} + (h-h_1)] \nonumber \\
	&\geq \sum_{h_1 = h_1^*}^{h_1^* +  m^* } \P[X = \mu_{n1} + h_1 ] \P[ Y = \mu_{n2} + (h-h_1)], \label{eq:int1}
	\end{align}
	for some $m^*$ to be chosen appropriately. 
	Using \cite[Theorem 1.5]{bollobas}, we have, 
	\begin{align}
	\P[X = \mu_{n1} + h_1 ] \geq \frac{1}{\sqrt{2\pi} \sigma_{n1}} \exp\Big[ - \frac{h_1^2}{ 2 \sigma_{n1}^2 } - \xi_{1n} \Big],  \nonumber 
	\end{align}
	for an explicit sequence $\xi_{1n}(h_1)$, depending on $h_1$. Upon using the fact that $a', b' \gg (\log n )^3$, $h_1 = O( \sigma_n \sqrt{2 \log n})$, for any $m^* \ll h_1^*$, it is immediate that $\xi_{1n}(h_1) = o(1)$, uniformly over $ h_1^* < h_1 < h_1^* + m^*$. Thus we have, 
	\begin{align}
	\P[X = \mu_{n1} + h_1 ] \geq (1 + o(1)) \frac{1}{\sqrt{2\pi} \sigma_{n1}} \exp\Big[ - \frac{h_1^2}{ 2 \sigma_{n1}^2 } \Big] . \nonumber
	\end{align}
	Similar arguments immediately imply that for $h_1^* < h_1 < h_1^* + m^*$,
	\begin{align}
	\P[ Y = \mu_{n2} + (h-h_1) ] \geq (1 + o(1)) \frac{1}{\sqrt{2 \pi} \sigma_{n2}} \exp\Big[ - \frac{(h- h_1)^2}{2 \sigma_{n2}^2} \Big]. \nonumber
	\end{align}
	Using these bounds in \eqref{eq:int1}, for $h = O(\sigma_n \sqrt{2 \log n})$, we obtain the lower bound 
	\begin{align}
	\P[ X+ Y = \mu_n + h] &\geq (1 + o(1)) \frac{m^*}{2\pi \sigma_{n1} \sigma_{n2}} \exp\Big[ - \frac{h^2 }{2\sigma_{n}^2} \Big] \nonumber\\
	&= (1 + o(1)) \frac{1}{\sqrt{2 \pi} \sigma_n }  \exp\Big[ - \frac{h^2 }{2\sigma_{n}^2} \Big] , \nonumber
	\end{align} 
	where we choose $m^* = \sqrt{2\pi} \sigma_{n1} \sigma_{n2} / \sigma_{n} \ll h_1^*$. Finally, we have, setting $M_n = \sigma_n \sqrt{C\log n}$ for some constant $C$ sufficiently large, 
	\begin{align}
	&\P[X+ Y > \mu_n + \sigma_n x_n ] \geq \sum_{h \in \mathscr{H} : \sigma_n x_n < h < M_n } \P[X+ Y = \mu_n + h] \nonumber\\
	&\geq (1 + o (1)) \frac{1}{\sqrt{2\pi}\sigma_n} \sum_{h \in \mathscr{H} : \sigma_n x_n < h < M_n } \exp\Big[ - \frac{h^2}{2 \sigma_n^2} \Big] \geq (1 + o(1)) ( 1 - \Phi(x_n)), \nonumber 
	\end{align}
	if $C$ is chosen sufficiently large. This establishes the required lower bound.
	
	Next, we turn to the upper bound. We have,
	\begin{align}
	&\P[X+Y > \mu_n + \sigma_n x_n] = \sum_{h \in \mathscr{H}: h > \sigma_n x_n } \P[X+Y = \mu_n + h]\nonumber\\
	&= \sum_{h \in \mathscr{H} : \sigma_n x_n < h < \sigma_n \sqrt{C \log n}} \P[X+Y = \mu_n +h] + \sum_{h \in \mathscr{H}: h > \sigma_n \sqrt{C \log n}} \P[X+Y = \mu_n + h], \label{eq:tail_upper_exact}
	\end{align}
	for some constant $C>0$ sufficiently large, to be chosen later. We have, using Lemma \ref{lemma:binomial_master} Part \ref{lemma:binomial_tail}, we have,
	\begin{align}
	\sum_{h \in \mathscr{H}: h > \sigma_n \sqrt{C \log n}} \P[X+Y = \mu_n + h] \leq n^{- \frac{C^2}{2} (1 + o(1)) }. \label{eq:temp_veryhightail} 
	\end{align}
	Finally, we will use the following ``local limit" lemma. 
	\begin{lemma}
		\label{lemma:eq_upper}
		Let $X \sim \textrm{Bin}\Big(\gamma_1n/k , \frac{a'}{n} \Big)$ and $Y \sim \textrm{Bin}\Big(\gamma_2n/k, \frac{b'}{n} \Big)$ be independent random variables with $a' \geq b'$, $\liminf b'/a' >0$ and $\gamma_1,\gamma_2,k$ given numbers such that $\gamma_1n/k,\gamma_2n/k\in \mathbb{N}$. Assume $b' \gg (\log n)^3$ and set $\mu_n = \E[X+Y]$, $\sigma_n^2 = \Var(X+Y)$. Then for any constant $C>2$ and $ \sigma_n \sqrt{2 \log n} < h < \sigma_n \sqrt{C \log n}$, we have, for $h \in \mathcal{H}$,
		\begin{align}
		\P[X+Y = \mu_n + h ] \leq  (1+ o(1))\frac{1}{\sqrt{2 \pi} \sigma_n} \exp\Big[ - \frac{h^2}{2 \sigma_n^2} \Big]. \nonumber 
		\end{align}
	\end{lemma}
	
	We defer the proof of Lemma \ref{lemma:eq_upper} and complete upper bound proof. Lemma \ref{lemma:eq_upper} immediately yields
	\begin{align}
	\sum_{h \in \mathscr{H} : \sigma_n x_n < h < \sigma_n \sqrt{C \log n}} \P[X+Y = \mu_n +h] &\leq (1 + o(1)) \sum_{h \in \mathscr{H} : \sigma_n x_n < h < \sigma_n \sqrt{C \log n}} \frac{1}{\sqrt{2\pi} \sigma_n} \exp\Big[ - \frac{h^2}{2 \sigma_n^2} \Big]. \nonumber\\
	&\leq (1+o(1)) \int_{x_n}^{\sqrt{C \log n}} \phi(x) \textrm{d}x, \nonumber 
	\end{align}
	where $\phi(\cdot)$ is the density of the standard Gaussian distribution. We know that $(1 - \Phi(\sqrt{C\log n})) \leq n^{-\frac{C^2}{2}}$. 
	Thus for $C$ sufficiently large, $\frac{\Phi(\sqrt{C \log n} ) - \Phi(x_n)}{1 - \Phi(x_n)} \to 1$ as $n \to \infty$. For any such choice of $C$, we immediately have, using \eqref{eq:tail_upper_exact} and \eqref{eq:temp_veryhightail}, 
	\begin{align}
	\P[X+Y > \mu_n + \sigma_n x_n] \leq  (1 + o(1)) (1 - \Phi(x_n)) + n^{-\frac{C^2}{2} (1 + o(1))} = (1 + o(1)) (1 - \Phi(x_n)). \nonumber 
	\end{align}
	This completes the proof modulo proof of Lemma \ref{lemma:eq_upper}. \qed

	\begin{proof}[Proof of Lemma \ref{lemma:eq_upper}]
		We have, for $h \in \mathscr{H}$, setting $h_1^* = h \sigma_{n1}^2/ \sigma_{n}^2$, and $m^* = \sqrt{2\pi}\frac{{\sigma_{n1} \sigma_{n2}}}{2\sigma_n}$, 
		\begin{align} 
		&\P[X+ Y = \mu_n + h ] = \sum_{h_1 \in \mathscr{H}_1} \P[X= \mu_{n1} + h_1 , Y = \mu_{n2} + h- h_1]  =T_1 + T_2 + T_3, \nonumber \\
		&T_1 = \sum_{h_1 \in \mathscr{H}_1: h_1 < h_1^* - m^* } \P[X= \mu_{n1} + h_1 , Y = \mu_{n2} + h- h_1]  , \nonumber \\
		&T_2 = \sum_{h_1 \in \mathscr{H}_1: h_1^* - m^*  < h_1 < h_1^* + m^* } \P[X= \mu_{n1} + h_1 , Y = \mu_{n2} + h- h_1] , \nonumber\\
		&T_3 = \sum_{h_1 \in \mathscr{H}_1: h_1 > h_1^* + m^* } \P[X= \mu_{n1} + h_1 , Y = \mu_{n2} + h- h_1] , \nonumber 
		\end{align}
		First, we analyze the term $T_2$. \cite[Theorem 1.2]{bollobas} implies that for $h_1 = O( \sigma_{n1} \sqrt{\log n} )$, 
		\begin{align}
		\P[ X = \mu_{n1} + h_1 ] < \frac{1}{\sqrt{2\pi} \sigma_{n1}} \exp\Big[- \frac{h_1^2}{2 \sigma_{n1}^2} + \xi_n(1) \Big] , \nonumber 
		\end{align}
		for an explicit sequence $\xi_n(1)$. Using $a'\geq b' \gg (\log n)^3$, and $h = O(\sigma_{n} \sqrt{ \log n})$ it immediately follows that $\xi_1(n) = o(1)$. Using similar arguments for $\P[ Y = \mu_{2 n} + h - h_1]$ , we obtain that 
		\begin{align}
		T_2 &\leq (1 + o(1)) \sum_{h_1 \in \mathscr{H}_1: h_1^* - m^*  < h_1 < h_1^* + m^* } \frac{1}{2 \pi \sigma_{n1} \sigma_{n2} } \exp\Big[ - \frac{h_1^2}{2 \sigma_{n1}^2} - \frac{(h- h_1)^2}{2 \sigma_{n2}^2} \Big] \nonumber \\
		&\leq (1 + o(1)) \frac{1}{\sqrt{2 \pi} \sigma_{n}}\exp\Big[ - \frac{h^2}{2 \sigma_n^2} \Big] =: (1 + o(1)) z_n, \nonumber 
		\end{align}
		using the definition of $h_1^*$. We will be done once we establish $T_1, T_3 = o(z_n)$. We will sketch this proof for $T_3$--- the argument for $T_1$ is analogous and will be omitted. We note that 
		\begin{align}
		T_3 &= \sum_{h \in \mathscr{H}_1: h_1^* + m^* < h_1 < x_n \sigma_{n1} (1 + \tau_n)} \P[ X = \mu_1 + h_1, Y = \mu_2 + h-h_1] \nonumber\\
		&+ \sum_{h \in \mathscr{H}_1: h_1 > x_n \sigma_{n1}(1 + \tau_n) } \P[X = \mu_{n1} + h_1, Y = \mu_{n2} + (h - h_1)], \label{eq:int2}
		\end{align}
		for some sequence $\tau_n>0$ to be chosen appropriately. We will establish that each of these terms is $o(T_2)$. To this end, we note that 
		\begin{align}
		\sum_{h \in \mathscr{H}_1: h_1 > x_n \sigma_{n1} (1 + \tau_n)} \P[X = \mu_{n1} + h_1, Y = \mu_{n2} + (h - h_1)] \leq \P[X > \mu_{n1} + \sigma_{n1} x_n ( 1  + \tau_n) ] \sup_x \P[Y = x]. \nonumber
		\end{align}
		By direct computation, it is easy to see that $\sup_x \P[Y= x] = O(\frac{1}{\sqrt{b'}})$. Using the results in \cite[Lemma 6.2]{mms2016}, we have, 
		\begin{align}
		\P[X> \mu_{n1} + \sigma_{n1} x_n (1 + \tau_n) ] \leq n^{- (1 + \tau_n)^2 (1 + o(1))}. \nonumber 
		\end{align}
		Thus for any sequence $\tau_n >0 $ such that $\liminf \tau_n >0$, the second term in \eqref{eq:int2} is $o(z_n)$. Next, we study the first term in the RHS of \eqref{eq:int2}. We note that $\sigma_{n} \geq \sigma_{n1}$, $h< \sigma_n \sqrt{C\log n}$ implies $h - x_n \sigma_{n1} (1 + \tau_n) \geq x_n \sigma_{n} \Big( 1 - \frac{\sigma_{n1}}{\sigma_n}  ( 1 + \tau_n) \Big)$ and $h- x_n \sigma_{n1} (1+ \tau_n) \leq \sqrt{\log n} \sigma_n \Big(\sqrt{C} - \sqrt{2} \frac{\sigma_{n1}}{\sigma_n} (1 + \tau_n) \Big)$. This implies that $h - x_n \sigma_{n1} (1 + \tau_n) = O( \sqrt{\log n} \sigma_n )$ for some sequence $\tau_n$ sufficiently small. We will fix any such sequence in the rest of the proof. For any such $  h_1^* + m^* < h_1 < x_n \sigma_n (1 + \tau_n)$, \cite[Theorem 1.2]{bollobas} implies that 
		\begin{align}
		\P[ X = \mu_{n1} + h_1] \leq \frac{1}{\sqrt{2 \pi} \sigma_{n1}} \exp\Big[ - \frac{h_1^2}{2 \sigma_{n1}^2 } + \xi_n(1, h_1) \Big], \nonumber 
		\end{align}
		for some explicit sequence $\xi_n(1, h_1)$, depending on $h_1$. Further, $a',b' \gg (\log n )^3$ and $ h_1^* + m^* < h_1 < x_n \sigma_{n1} (1 + \tau_n)$ implies that 
		\begin{align}
		\P[ X = \mu_{n1} + h_1 ] \leq (1 + o(1)) \frac{1} { \sqrt{2 \pi} \sigma_{n1}} \exp\Big[ - \frac{h_1^2}{2 \sigma_{n1}^2}  \Big]. \nonumber
		\end{align}
		where $o(1)$ is a term uniformly controlled for all $ h_1^* + m^* < h_1 < x_n \sigma_n (1 + \tau_n)$.
		Exactly analogous considerations imply that 
		\begin{align}
		\P[Y = \mu_{n2} + h - h_1] \leq (1 + o(1)) \frac{1}{ \sqrt{2 \pi} \sigma_{n2}} \exp\Big[ - \frac{ (h- h_1)^2}{2 \sigma_{n2}^2} \Big]. \nonumber 
		\end{align}
		Thus we have, 
		\begin{align}
		&\sum_{h \in \mathscr{H}_1: h_1^* + m^* < h_1 < x_n \sigma_{n1} (1 + \tau_n)} \P[ X = \mu_1 + h_1, Y = \mu_2 + h-h_1] \nonumber \\
		&\leq (1 + o(1))  \sum_{h \in \mathscr{H}_1 : h_1^* + m^* < h_1 < x_n \sigma_{n1}( 1 + \tau_n)} \frac{1}{2 \pi \sigma_{n1} \sigma_{n2}} \exp\Big[ - \frac{h_1^2}{2 \sigma_{n1}^2} - \frac{(h- h_1)^2}{2 \sigma_{n2}^2} \Big]. \nonumber \\
		&\leq (1 + o(1)) \frac{1}{\sqrt{2\pi} \sigma_n } \exp\Big[ - \frac{h^2 \sigma_{n1}^2}{2 \sigma_n^2} \Big] \int_{h_1^* + m^*}^{x_n \sigma_{n1} (1+\tau_n)}  \frac{1}{\sqrt{2\pi} \sigma_0} \exp\Big[  -\frac{(x- \frac{h \sigma_0^2 }{\sigma_{n2}^2})^2}{2 \sigma_0^2} \Big]\textrm{d} x \nonumber \\
		&\leq (1 + o(1)) \frac{1}{\sqrt{2 \pi}\sigma_n }   \exp\Big[ - \frac{h^2 \sigma_{n1}^2}{2 \sigma_n^2} \Big] = o(z_n), \nonumber 
		\end{align}
		where we set $\sigma_0^2 = \frac{\sigma_{n1}^2 \sigma_{n2}^2}{\sigma_n^2}$.  This completes the proof. 
	\end{proof}

\end{document}